\newtheorem{definition}{Definition}[section]
\newtheorem{proposition}[definition]{Proposition}
\newtheorem{lemma}[definition]{Lemma}
\newtheorem{theorem}[definition]{Theorem}
\newtheorem{corollary}[definition]{Corollary}
\newtheorem{remark}[definition]{Remark}
\newtheorem{example}[definition]{Example}
\newenvironment{proof}{\begin{IEEEproof}}{\end{IEEEproof}}
\newcommand{\Sect}{Section }
\newcommand{\cat}[1]{\mathbb{#1}}
\newcommand{\arcat}[1]{#1^{\to}}
\newcommand{\fibcat}[1]{(#1)_{\text{f}}}
\newcommand{\Rel}{\mathbf{Rel}}
\newcommand{\cod}{\mathbf{cod}}
\newcommand{\Sph}[1]{\mathbb{S}^{#1}}
\newcommand{\Z}{\mathbb{Z}}
\newcommand{\N}{\mathbb{N}}
\newcommand{\Zero}{\mathbf{0}}
\newcommand{\One}{\mathbf{1}}
\newcommand{\Two}{\mathbf{2}}
\newcommand{\synt}[1]{\mathsf{#1}}
\newcommand{\kind}{\synt{kind}}
\newcommand{\type}{\synt{type}}
\newcommand{\refl}{\synt{refl}}
\newcommand{\Ind}{\synt{Ind}}
\newcommand{\base}{\synt{base}}
\newcommand{\Sloop}{\synt{loop}}
\newcommand{\U}{\mathcal{U}}
\newcommand{\pair}[2]{\langle #1, #2 \rangle}
\newcommand{\lang}[1]{\mathcal{L}(#1)}
\date{\today}
\title{Fibred Fibration Categories}
\begin{document}

\author{\IEEEauthorblockN{Taichi Uemura}
\IEEEauthorblockA{Research Institute for Mathematical Sciences,
Kyoto University\\
Email: \url{uemura@kurims.kyoto-u.ac.jp}\\
Website: \url{http://www.kurims.kyoto-u.ac.jp/~uemura}}}
\maketitle

\begin{abstract}
\label{sec-1}
We introduce fibred type-theoretic fibration categories
which are fibred categories between
categorical models of Martin-L\"{o}f type theory.
Fibred type-theoretic fibration categories give a categorical description of
logical predicates for identity types.
As an application,
we show a relational parametricity result for homotopy type theory.
As a corollary, it follows that
every closed term of type of polymorphic endofunctions on a loop space
is homotopic to some iterated concatenation of a loop.
\end{abstract}
\section{Introduction}
\label{sec:org650e83f}

\label{sec-2-1}

\label{sec-2-1-1}
\emph{Homotopy type theory} \cite{hottbook} is a recent subject
that combines Martin-L\"{o}f type theory \cite{martin-lof1975intuitionistic}
with homotopy theory.
The key idea is to identify types as spaces,
elements as points and equalities between elements as paths between points.
It provides an abstract language for
proving homotopy-theoretic theorems.
Among abstract languages of this sort,
including model categories
\cite{hovey2007model,quillen1967homotopical}
and various other models for (\(\infty\), 1)-categories
\cite{bergner2010survey,lurie2009higher},
homotopy type theory has unique tools
which are convenient to formalize homotopy theory.

\label{sec-2-1-2}
One such tool is \textbf{higher inductive types}.
They give a simple way to construct spaces
such as spheres, tori and other cell complexes.
We can define functions on a higher inductive type
``by recursion'' and
prove theorems on a higher inductive type
``by induction,''
just like functions and theorems on natural numbers.
Another tool is Voevodsky's \textbf{univalence axiom}.
Informally, this states that equivalent types are identical.
With this axiom we can prove
that isomorphic groups are equal,
equivalent categories are equal,
isometric Hilbert spaces are equal
and any other isomorphisms between mathematical structures
can be replaced by equalities.
Higher inductive types and the univalence axiom
provide a synthetic way to prove homotopy-theoretic theorems,
and the proofs are formalized in proof assistants
such as Coq and Agda \cite{bauer2017library,unimath,HoTT/HoTT-Agda}.

\label{sec-2-1-3}
Homotopy type theory provides such new technical tools
for algebraic topologists,
but what can we say about type theory itself?
In particular, how do the univalence axiom and higher inductive types
affect the behavior of type theory?
In the study of type theory,
``logical predicates'' have been a useful technique
for analyzing type theories.
We expect that this technique is useful for homotopy type theory,
but what is a logical predicate for homotopy type theory?
Shulman introduced \emph{type-theoretic fibration categories}
as sound and complete categorical semantics of Martin-L\"{o}f type theory
and proved that the \emph{gluing construction} \(\fibcat{\cat{D} \downarrow \Gamma}\)
for a suitable functor \(\Gamma : \cat{C} \to \cat{D}\)
between type-theoretic fibration categories
is again a type-theoretic fibration category \cite{shulman2015inverse}.
In his formulation,
a logical predicate is a section of a gluing construction
over the syntactic category.

\label{sec-2-2}
In this paper,
we give an alternative look at Shulman's formulation.
We regard \(\fibcat{\cat{D} \downarrow \Gamma}\) as a \emph{fibred category}
\(\fibcat{\cat{D} \downarrow \Gamma} \to \cat{C}\)
such that all fibers have type-theoretic structures
and the total type-theoretic structure is obtained from the fiberwise ones.
We show that
for a fibred category whose base category is a type-theoretic fibration category,
total and fiberwise structures of type-theoretic fibration category coincide under some conditions.
This gives a correct notion of fibred category between type-theoretic fibration categories.
We call such a fibred category a \emph{fibred type-theoretic fibration category}.

\label{sec-2-2-1}
Fibred categories are used as models of logical predicates
in the study of categorical type theory \cite{hermida1993fibrations,jacobs1999categorical}.
For a fibred category and
an interpretation of a type theory in the base category,
a logical predicate on the interpretation is
a cross-section of the fibred category over the syntactic category.
In our formulation,
a logical predicate for Martin-L\"{o}f type theory
is a cross-section of a fibred type-theoretic fibration category
over the syntactic category.
We have a similar formulation of logical predicates for homotopy type theory
by introducing the notion of \emph{fibred univalent universe}.

\label{sec-2-3}
As an application,
we show a relational parametricity result for homotopy type theory.
Relational parametricity is well-developed in the study of polymorphic type theory
\cite{bainbridge1990functorial,birkedal2005categorical,hasegawa1994categorical,ma1992types,plotkin1993logic,reynolds1983types,wadler1989free,wadler2003girard,wadler2007girard}.
Recently relational parametricity for dependent type theory
has been studied by several authors.
Krishnaswami and Dreyer \cite{dreyer2013internalizing}
and Atkey et al. \cite{atkey2014relationally} construct relationally parametric models
of the Calculus of Constructions and
Martin-L\"{o}f type theory respectively.
Takeuti \cite{takeuti2001parametricity} and Bernardy et al. \cite{bernardy2012proofs}
study relational parametricity
for the lambda cube and pure type systems respectively
via syntactic transformations from one type theory into another.
Following Takeuti and Bernardy et al.
we show the \emph{abstraction theorem} for homotopy type theory,
the soundness of a syntactic transformation of types to binary type families.
Our contribution is to give transformations of identity types
and the univalence axiom.
In the proof of the abstraction theorem,
we use a fibred type-theoretic fibration category
\(\Rel(\cat{T}) \to \cat{T}\) which we call the \emph{relational model}
for the syntactic category \(\cat{T}\).

As a corollary of the abstraction theorem,
we show the \emph{homotopy unicity property} on polymorphic functions
in homotopy type theory.
A typical example of polymorphic function in homotopy type theory is a function \(f\)
of the type \(\Pi_{X : \U}\Pi_{x : X}x = x \to x = x\).
This type cannot be defined in polymorphic type theory
because it uses dependent types,
and it seems to be trivial without homotopy-theoretic interpretation.
It follows from the abstraction theorem
that if \(f\) is a closed term of this type
then it must be homotopic to an iterated concatenation of a loop,
that is, for some integer \(n\), \(f(l) = l^{n}\) for all \(l : x = x\).
Note that, assuming the law of excluded middle,
this property does not hold
because the law of excluded middle allows case analysis on types.
We have a partial answer to the question how the univalence axiom affects the type theory:
the univalence axiom does not violate relational parametricity,
while the law of excluded middle does.
Further applications of the abstraction theorem
can be found in \cite{uemura2017homotopies}.

\label{sec-2-4}
There is some related work in the study of abstract homotopy theory.
Roig \cite{roig1994model} and Stanculescu \cite{stanculescu2012bifibrations}
considered weak factorization systems and model structures
on bifibred categories and
gave a construction from
fiberwise structures to total structures.
Szumi\l{}o \cite{szumilo2014two} introduced fibrations of (co)fibration categories
to study the homotopy theory of homotopy theories.

\label{sec-2-5}
\textbf{Organization.}
We begin in \Sect \ref{orgc341df6}
by recalling the definition and basic properties of
type-theoretic fibration categories.
In \Sect \ref{org3fd3ce8}
we define fibred type-theoretic fibration categories
and give two constructions of them.
One construction is the \emph{fiberwise-to-total construction} and
the other is the \emph{change of base}.
We also define the \emph{internal language}
for a fibred type-theoretic fibration category
and show the ``basic lemma'' for logical predicates.
In \Sect \ref{orgd199e01}
we discuss universes and the univalence axiom
in a fibred type-theoretic fibration category.
We construct a univalent universe in the total category
from fiberwise ones.
We also show that the univalence axiom
is preserved by change of base of fibred type-theoretic fibration categories.
Finally in \Sect \ref{orge32f29e}, we show a relational parametricity result for homotopy type theory
and its corollaries.

\section{Type-Theoretic Fibration Categories}
\label{sec:orgeb5b62f}
\label{orgc341df6}
First of all, we fix a notion of categorical models
of Martin-L\"{o}f's dependent type theory with
dependent product types, dependent sum types and identity types.
Among various categorical models of Martin-L\"{o}f type theory,
we use type-theoretic fibration category
because it seems to have the simplest formulation
of identity types.

\begin{definition}
\label{sec-3-1}
\label{orgcd183b9}\label{org6b06600}\label{orgb284de9}
Let \(i : A \to B\) and \(p : C \to D\) in a category.
The morphism \(i\) has the \emph{left lifting property with respect to \(p\)}
(\(p\) has the \emph{right lifting property with respect to \(i\)})
if for all \(f : A \to C\) and \(g : B \to D\)
such that \(p \circ f = g \circ i\),
there exists an \(h : B \to C\) such that
\(h \circ i = f\) and \(p \circ h = g\).
\end{definition}

\begin{definition}
\label{sec-3-2}
\label{org1cf19e2}
\cite[Definition 2.2]{shulman2016eidiagrams}
A \emph{type-theoretic fibration category} is
a category \(\cat{C}\) equipped with
a terminal object \(1\) and
a subcategory \(\mathcal{F} \subset \cat{C}\)
satisfying the conditions below.
Here a morphism in \(\mathcal{F}\) is called
a \emph{fibration} and denoted by
a two headed arrow \(A \twoheadrightarrow B\),
and a morphism that has the left lifting property
with respect to all fibrations
is called an \emph{acyclic cofibration} and denoted by
\(A \overset{\sim}{\rightarrowtail} B\).
\begin{enumerate}
\item \label{org57e04f0}
All isomorphisms and
all morphisms with codomain \(1\)
are fibrations.
\item \label{orgffa6b99}
Fibrations are closed under pullbacks:
if \(f : A \twoheadrightarrow B\) is a fibration
and \(s : B' \to B\) is any morphism,
then there exists a pullback \(s^{*}A\)
of \(A\) along \(s\),
and the morphism \(s^{*}A \to B'\) is again a fibration.
\item \label{orgd3d37e2}
Every morphism factors as
an acyclic cofibration
followed by a fibration.
\item \label{orge8efea6}For any fibrations \(f : A \twoheadrightarrow B\)
and \(g : B \twoheadrightarrow C\),
there exist a fibration
\(\Pi_{g}f : \Pi_{g}A \twoheadrightarrow C\)
and a natural bijection
\[ \cat{C}/C\left( h, \Pi_{g}f \right) \cong
   \cat{C}/B\left( g^{*}h, f \right) \]
for \(h : X \to C\).
Such a fibration \(\Pi_{g}f\) is called a \emph{dependent product of \(f\) along \(g\)}.
\end{enumerate}
\end{definition}

\begin{remark}
\label{sec-3-3}
The condition \ref{orge8efea6} of Definition \ref{org1cf19e2}
implies that the pullback along a fibration
preserves acyclic cofibrations.
\end{remark}

\begin{example}
\label{sec-3-4}
If a category \(\cat{C}\) has finite limits
and is locally cartesian closed,
then \(\cat{C}\) is a type-theoretic fibration category
where every morphism is a fibration.
In this case,
the acyclic cofibrations are the isomorphisms.
\end{example}
\begin{example}
\label{sec-3-5}
\label{org1eeac1b}
Let \(\cat{C}\) be a type-theoretic fibration category.
Write \(\fibcat{\cat{C} / A}\)
for the full subcategory of \(\cat{C} / A\)
where the objects are the fibrations over \(A\).
The category \(\fibcat{\cat{C} / A}\)
is a type-theoretic fibration category
whose fibrations are the morphisms
that are fibrations in \(\cat{C}\).
\end{example}
\begin{definition}
\label{sec-3-6}
A functor between type-theoretic fibration categories
is a \emph{type-theoretic functor} if
it preserves terminal objects, fibrations,
pullbacks of fibrations, acyclic cofibrations,
and dependent products.
\end{definition}
\begin{example}
\label{sec-3-7}
\label{orgaabe3e5}
For a type-theoretic fibration category \(\cat{C}\)
and its morphism \(s : A \to B\),
consider the pullback functor
\(s^{*} : \fibcat{\cat{C}/B}
\to \fibcat{\cat{C}/A}\).
It preserves fibrations by definition of type-theoretic fibration category
and acyclic cofibrations by the following Lemma \ref{org67ad797}.
Moreover it preserves dependent products by Lemma \ref{orgf15694c} below.
Thus \(s^{*}\) is a type-theoretic functor.
\end{example}

\begin{lemma}
\label{sec-3-8}
\label{org67ad797}
\cite[Lemma 2.3]{shulman2016eidiagrams}
Given the following diagram in a type-theoretic fibration category
\[
\begin{tikzcd}
 [row sep=3ex] s^{*}A \arrow[dd,twoheadrightarrow] \arrow[dr,"s^{*}f"'] \arrow[rr] &  & A \arrow[dd,twoheadrightarrow,"\hole" description] \arrow[dr,rightarrowtail,"f"] &  \\
  & s^{*}B \arrow[dl,twoheadrightarrow] \arrow[rr] &  & B \arrow[dl,twoheadrightarrow] \\
 C' \arrow[rr,"s"'] &  & C &  \end{tikzcd}
\]
where \(A \twoheadrightarrow C\) and \(B \twoheadrightarrow C\) are fibrations
and \(f : A \overset{\sim}{\rightarrowtail} B\) is an acyclic cofibration,
then \(s^{*}f : s^{*}A \to s^{*}B\) is an acyclic cofibration.
\end{lemma}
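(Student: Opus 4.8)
The plan is to reduce, via a factorization of $s$, to the two special cases where $s$ is a fibration and where $s$ is an acyclic cofibration. First I would factor $s$ as an acyclic cofibration $j\colon C'\overset{\sim}{\rightarrowtail}C''$ followed by a fibration $q\colon C''\twoheadrightarrow C$ (condition~\ref{orgd3d37e2} of Definition~\ref{org1cf19e2}). Since $A\twoheadrightarrow C$ and $B\twoheadrightarrow C$ are fibrations, pasting of pullback squares gives canonical isomorphisms $s^{*}A\cong j^{*}(q^{*}A)$ and $s^{*}B\cong j^{*}(q^{*}B)$ carrying $s^{*}f$ to $j^{*}(q^{*}f)$; as isomorphisms have the left lifting property with respect to every morphism and acyclic cofibrations are closed under composition, it then suffices to prove the statement when $s$ is a fibration and when $s$ is an acyclic cofibration. (Concretely: the fibration case applied to $q$ shows that $q^{*}f$ is an acyclic cofibration between the fibrations $q^{*}A\twoheadrightarrow C''$ and $q^{*}B\twoheadrightarrow C''$, and then the acyclic cofibration case applied to $j$ finishes the argument.)

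The fibration case is immediate: if $s$ is a fibration, then $s^{*}$ is pullback along a fibration, which preserves acyclic cofibrations by Remark~\ref{sec-3-3}, while $s^{*}A$ and $s^{*}B$ are fibrations over $C'$ by condition~\ref{orgffa6b99}.

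For the acyclic cofibration case, suppose $s\colon C'\overset{\sim}{\rightarrowtail}C$ is an acyclic cofibration, and write $u_{A}\colon s^{*}A\to A$, $u_{B}\colon s^{*}B\to B$ for the evident projections. I would first note that $u_{A}$ and $u_{B}$ are pullbacks of the acyclic cofibration $s$ along the fibrations $A\twoheadrightarrow C$ and $B\twoheadrightarrow C$, so they are themselves acyclic cofibrations by Remark~\ref{sec-3-3}, and that $u_{B}\circ s^{*}f=f\circ u_{A}$ by construction of $s^{*}f$. Now, to check that $s^{*}f$ has the left lifting property with respect to an arbitrary fibration $p\colon X\twoheadrightarrow Y$, take $a\colon s^{*}A\to X$ and $b\colon s^{*}B\to Y$ with $p\circ a=b\circ s^{*}f$. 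Since $Y\twoheadrightarrow 1$ is a fibration (condition~\ref{org57e04f0}) and $u_{B}$ is an acyclic cofibration, $b$ extends along $u_{B}$ to some $\tilde b\colon B\to Y$ with $\tilde b\circ u_{B}=b$. Then $p\circ a=b\circ s^{*}f=\tilde b\circ u_{B}\circ s^{*}f=\tilde b\circ f\circ u_{A}$, so the square with top $a$, left side $u_{A}$, bottom $\tilde b\circ f$ and right side $p$ commutes; as $u_{A}$ is an acyclic cofibration and $p$ a fibration, it has a filler $\tilde a\colon A\to X$ with $\tilde a\circ u_{A}=a$ and $p\circ\tilde a=\tilde b\circ f$. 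Thus $(\tilde a,\tilde b)$ is a lifting problem for the acyclic cofibration $f$ against the fibration $p$, which has a solution $r\colon B\to X$ with $r\circ f=\tilde a$ and $p\circ r=\tilde b$; and $r\circ u_{B}\colon s^{*}B\to X$ then solves the original problem, because $r\circ u_{B}\circ s^{*}f=r\circ f\circ u_{A}=\tilde a\circ u_{A}=a$ and $p\circ r\circ u_{B}=\tilde b\circ u_{B}=b$.

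The main obstacle is the acyclic cofibration case above. It is tempting to argue that $s^{*}f$ is an acyclic cofibration by two-out-of-three, since $u_{A}$, $u_{B}$ and $f$ are, but acyclic cofibrations in a type-theoretic fibration category are not known to satisfy two-out-of-three — they are defined purely by a lifting property — so the $u$'s cannot simply be cancelled. The trick is rather to extend the given lifting square along $u_{A}$ and $u_{B}$, which works because every object is fibrant (condition~\ref{org57e04f0}), thereby reducing to a lifting problem against $f$ itself.
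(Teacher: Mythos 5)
Your proof is correct. Note that the paper itself gives no argument for this lemma --- it simply defers to the cited result of Shulman --- so what you have written is a self-contained replacement built only from the axioms of Definition~\ref{org1cf19e2} and Remark~\ref{sec-3-3}, and every step checks out: the factorization of \(s\) via condition~\ref{orgd3d37e2} reduces to the two cases; the fibration case is exactly Remark~\ref{sec-3-3}; and in the acyclic-cofibration case the hypothesis that \(A\twoheadrightarrow C\) and \(B\twoheadrightarrow C\) are fibrations is used precisely where it must be, namely to see (again by Remark~\ref{sec-3-3}, reading the pullback squares the other way) that the projections \(u_{A}\colon s^{*}A\to A\) and \(u_{B}\colon s^{*}B\to B\) are acyclic cofibrations, after which fibrancy of every object (condition~\ref{org57e04f0}) lets you extend \(b\) along \(u_{B}\), then extend \(a\) along \(u_{A}\), and finally lift against \(f\) itself; the verification that \(r\circ u_{B}\) solves the original problem is right. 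Your closing remark is also well taken: since acyclic cofibrations are defined purely by a lifting property, no two-out-of-three cancellation is available, and the extension-along-\(u_{A},u_{B}\) trick is the honest substitute. The only dependency worth flagging is that your argument leans twice on Remark~\ref{sec-3-3}, which the paper also states without proof; it is a standard consequence of the dependent-product adjunction of condition~\ref{orge8efea6} (transpose a lifting problem, after pulling the fibration back to the codomain, across \(\Pi\) along the pulled-back fibration), and Shulman's cited proof is essentially of that transposition style applied directly to \(s^{*}f\), so your route trades that single adjunction computation for an elementary factor-and-extend argument --- a reasonable exchange, and arguably easier to check.
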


\begin{lemma}
\label{sec-3-9}
\label{orgf15694c}
The dependent products in a type-theoretic fibration category \(\cat{C}\)
satisfy the Beck-Chevalley condition:
if the following diagram is a pullback of a fibration
\[
\begin{tikzcd}
 B' \arrow[d,twoheadrightarrow,"g'"'] \arrow[r,"h"] & B \arrow[d,twoheadrightarrow,"g"] \\
 C' \arrow[r,"k"] & C \end{tikzcd}
\]
then the canonical natural transformation
\(k^{*}\Pi_{g} \Rightarrow \Pi_{g'}h^{*}\)
is an isomorphism.
\end{lemma}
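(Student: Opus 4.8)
The plan is to prove this by the standard Beck--Chevalley argument for locally cartesian closed categories, carried out via the Yoneda lemma in $\cat{C}/C'$, with the single extra point that every reindexing we perform is a pullback of a fibration and hence available by condition \ref{orgffa6b99} of Definition \ref{org1cf19e2}; the only tools needed are the universal property of dependent products from condition \ref{orge8efea6} and the pasting law for pullbacks.

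First I would pin down the canonical natural transformation. Write $\varepsilon_{g} : g^{*}\Pi_{g}f \to f$ for the morphism over $B$ corresponding to $\mathrm{id}_{\Pi_{g}f}$ under the bijection of condition \ref{orge8efea6}. Because $k \circ g' = g \circ h$, the pasting law gives an isomorphism $(g')^{*}k^{*}\Pi_{g}f \cong h^{*}g^{*}\Pi_{g}f$ over $B'$ (all objects here are fibrations, so all of these reindexings are pullbacks of fibrations). Then $h^{*}\varepsilon_{g} : h^{*}g^{*}\Pi_{g}f \to h^{*}f$, viewed as a morphism $(g')^{*}k^{*}\Pi_{g}f \to h^{*}f$ over $B'$, transposes along the bijection of condition \ref{orge8efea6} for $g'$ to a morphism $k^{*}\Pi_{g}f \to \Pi_{g'}h^{*}f$ over $C'$; this is the canonical transformation, and its naturality in $f$ is routine.

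Next I would compute, for an arbitrary $m : Y \to C'$, the chain of natural bijections
\begin{align*}
\cat{C}/C'(m, k^{*}\Pi_{g}f)
 &\cong \cat{C}/C(k \circ m, \Pi_{g}f)
  \cong \cat{C}/B(g^{*}(k \circ m), f) \\
 &\cong \cat{C}/B'((g')^{*}m, h^{*}f)
  \cong \cat{C}/C'(m, \Pi_{g'}h^{*}f),
\end{align*}
in which the first bijection is the universal property of the pullback $k^{*}\Pi_{g}f$, the second is condition \ref{orge8efea6} for $g$, and the fourth is condition \ref{orge8efea6} for $g'$. The third bijection is the heart of the matter: by the pasting law the object $g^{*}(k \circ m)$, computed as $B \times_{C} Y$, is canonically isomorphic to $(g')^{*}m$, computed as $B' \times_{C'} Y$, and under this identification the structure map down to $B$ factors through $h : B' \to B$, so that a morphism into $f$ over $B$ is precisely a morphism into $h^{*}f = B' \times_{B} A$ over $B'$. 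By the Yoneda lemma in $\cat{C}/C'$ this chain yields an isomorphism $k^{*}\Pi_{g}f \cong \Pi_{g'}h^{*}f$ over $C'$, natural in $f$.

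It then remains to check that the isomorphism extracted from Yoneda coincides with the canonical transformation of the second paragraph; this is a matter of tracing $\mathrm{id}$ through the chain of bijections and comparing it with $h^{*}\varepsilon_{g}$. I expect the only real obstacle to be this last verification together with the bookkeeping of the nested pullbacks --- in particular keeping the identification $B \times_{C} Y \cong B' \times_{C'} Y$ straight and remembering that $h^{*}f$ is reindexed along $h$ rather than along another leg of the square --- none of which involves anything deeper than the two cited conditions.
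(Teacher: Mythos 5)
Your proposal is correct and follows essentially the same route as the paper: the paper's proof is exactly the chain of natural bijections $\cat{C}/C'(x, k^{*}\Pi_{g}f) \cong \cat{C}/C(k \circ x, \Pi_{g}f) \cong \cat{C}/B(g^{*}(k\circ x), f) \cong \cat{C}/B'(g'^{*}x, h^{*}f) \cong \cat{C}/C'(x, \Pi_{g'}h^{*}f)$, using the pasting identification $g^{*}(k\circ x) \cong h \circ g'^{*}x$, and concluding that $k^{*}\Pi_{g}A$ has the universal property of $\Pi_{g'}h^{*}A$. Your extra care in writing down the canonical transformation via the counit and noting the final compatibility check is a refinement of, not a departure from, the paper's argument.
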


\begin{proof}
\label{sec-3-10}
For any fibration \(f : A \twoheadrightarrow B\),
\(k^{*}\Pi_{g}A\) has the same universal property as \(\Pi_{g'}h^{*}A\).
Indeed, for any morphism \(x : X \to C'\) we have natural bijections
\begin{prooftree}
\AxiomC{\(x \to k^{*}\Pi_{g}f\) in \(\cat{C}/C'\)}
\UnaryInfC{\(k \circ x \to \Pi_{g}f\) in \(\cat{C}/C\)}
\UnaryInfC{\(g^{*}(k \circ x) \to f\) in \(\cat{C}/B\)}
\UnaryInfC{\(h \circ g'^{*}x \to f\) in \(\cat{C}/B\)}
\UnaryInfC{\(g'^{*}x \to h^{*}f\) in \(\cat{C}/B'\).}
\end{prooftree}
\end{proof}

\label{sec-3-11}
Type-theoretic fibration categories are
a categorical model of Martin-L\"{o}f type theory.
A type \(\Gamma \vdash A \ \type\)
is interpreted by a fibration \(A \twoheadrightarrow \Gamma\),
and a term \(\Gamma \vdash a : A\)
is interpreted by a section of the fibration \(A \twoheadrightarrow \Gamma\).
A dependent sum type \(\Gamma \vdash \Sigma_{a : A}B(a) \ \type\)
is interpreted by the composition of fibrations
\(B \twoheadrightarrow A \twoheadrightarrow \Gamma\).
A dependent product type \(\Gamma \vdash \Pi_{a : A}B(a) \ \type\)
is interpreted by a dependent product of \(B \twoheadrightarrow A\)
along \(A \twoheadrightarrow \Gamma\).
An identity type \(\Gamma, a : A, a' : A \vdash a = a' \ \type\)
is interpreted by a factorization
\(A \overset{\sim}{\rightarrowtail} P_{\Gamma}A \twoheadrightarrow A \times_{\Gamma} A\)
of the diagonal morphism \(A \to A \times_{\Gamma} A\).
Such an object \(P_{\Gamma}A\) is called a \emph{path object of \(A\) over \(\Gamma\)}.

\label{sec-3-12}
Let \(f, g : A \to B\) be parallel morphisms
between a morphism \(A \to \Gamma\) and a fibration \(B \twoheadrightarrow \Gamma\).
A \emph{homotopy from \(f\) to \(g\) over \(\Gamma\)}
is a morphism \(H : A \to P_{\Gamma}B\) into some path object
whose first and second projections are \(f\) and \(g\) respectively.
We say \(f\) and \(g\) are \emph{homotopic over \(\Gamma\)}, written \(f \sim_{\Gamma} g\),
when there exists a homotopy from \(f\) to \(g\) over \(\Gamma\).
We omit the subscript \(_{\Gamma}\) when \(\Gamma = 1\)
and write simply \(f \sim g\).
It is known that the relation \(\sim\) is a congruence relation on hom sets \cite[Section 3]{shulman2015inverse}.
A morphism \(f : A \to B\) is a \emph{homotopy equivalence}
if there exists a morphism \(g : B \to A\)
such that \(gf \sim 1\) and \(fg \sim 1\).
The morphism \(f\) is \emph{bi-invertible} if
there exist morphisms \(g, h : B \to A\)
such that \(gf \sim 1\) and \(fh \sim 1\).
The morphism \(f\) is a \emph{half adjoint equivalence}
if there exist a morphism \(g : B \to A\)
and homotopies \(\eta : gf \sim 1\) and \(\varepsilon : fg \sim 1\)
such that \(f\eta \sim_{B \times B} \varepsilon f\).
By the standard argument in homotopy type theory
the notions of homotopy equivalences, bi-invertible morphisms
and half adjoint equivalences are logically equivalent \cite[Chapter 4]{hottbook}.

\label{sec-3-13}
We state some properties of type-theoretic fibration categories
for future use.
\begin{lemma}
\label{sec-3-13-1}
\label{org8c01c79}
If \(f : A \overset{\sim}{\rightarrowtail} B\)
is an acyclic cofibration,
then there exists a morphism \(g : B \to A\)
such that \(gf = 1\).
\end{lemma}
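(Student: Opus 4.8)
The plan is to exhibit $g$ as a lift in a commuting square and read off the desired equation from one of the two triangle identities that a lift satisfies. Since $f$ is an acyclic cofibration, by definition it has the left lifting property with respect to every fibration, so the whole task reduces to choosing the right fibration to lift against.

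First I would take the fibration to be the unique morphism $A \to 1$, which is a fibration by condition \ref{org57e04f0} of Definition \ref{org1cf19e2} (all morphisms with codomain $1$ are fibrations). Then I would form the square whose top edge is $1_A : A \to A$, whose left edge is $f : A \to B$, whose right edge is $A \to 1$, and whose bottom edge is the unique morphism $B \to 1$. This square commutes because both composites $A \to 1$ are the unique morphism into the terminal object. Applying the left lifting property of $f$ against $A \twoheadrightarrow 1$ yields a morphism $h : B \to A$ with $h \circ f = 1_A$ and $(A \to 1) \circ h = (B \to 1)$.

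Finally I would set $g := h$; the first triangle identity $h \circ f = 1_A$ is exactly the claim, and the second identity is automatic and can be discarded. There is essentially no obstacle here: the only ``choice'' is to lift against $A \to 1$ rather than some other fibration, and once that observation is made the argument is immediate. (One could equally lift against $B \to 1$, but then the resulting square would not have $f$ on the left in a way that produces a retraction, so the codomain-$1$ fibration on $A$ is the correct target.)
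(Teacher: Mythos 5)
Your proof is correct and is exactly the paper's argument: the paper's one-line proof ("use the lifting property with respect to the fibration $A \twoheadrightarrow 1$") is precisely your lifting square, just left implicit. Nothing to add.
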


\begin{proof}
\label{sec-3-13-2}
Use the lifting property with respect to
the fibration \(A \twoheadrightarrow 1\).
\end{proof}

\begin{lemma}
\label{sec-3-13-3}
\label{org5a8693c}
\cite[Lemma 2.4]{shulman2016eidiagrams}
For \(f : A \to B\) and \(g : B \to C\),
if \(g\) and \(gf\) are acyclic cofibrations,
then so is \(f\).
\end{lemma}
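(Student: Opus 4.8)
The plan is to check directly that $f$ has the left lifting property with respect to every fibration $p : X \twoheadrightarrow Y$, which is precisely what it means for $f$ to be an acyclic cofibration. So I would start from an arbitrary commutative square with top edge $u : A \to X$, bottom edge $v : B \to Y$, left edge $f$ and right edge $p$ (so $p \circ u = v \circ f$), and aim to produce a filler $h : B \to X$ with $h \circ f = u$ and $p \circ h = v$.

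The one structural ingredient is Lemma \ref{org8c01c79}: applied to the acyclic cofibration $g$, it yields a retraction $r : C \to B$ with $r \circ g = 1_{B}$. Everything after that is diagram chasing. Next I would transport the given square along $g$: the square with left edge $g \circ f$, top edge $u$, right edge $p$, and bottom edge $v \circ r$ commutes, because $(v \circ r) \circ (g \circ f) = v \circ (r \circ g) \circ f = v \circ f = p \circ u$. Since $g \circ f$ is an acyclic cofibration, this square admits a filler $k : C \to X$ with $k \circ (g \circ f) = u$ and $p \circ k = v \circ r$.

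I would then take $h := k \circ g$. The two required identities are immediate: $h \circ f = k \circ (g \circ f) = u$, and $p \circ h = (p \circ k) \circ g = (v \circ r) \circ g = v \circ (r \circ g) = v$. Since $p$ was an arbitrary fibration, this shows $f$ is an acyclic cofibration.

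There is essentially no obstacle here: the only thing one has to notice is that a lifting problem for $f$ can be pushed forward to a lifting problem for $g \circ f$ by composing the given bottom map with the retraction $r$ supplied by Lemma \ref{org8c01c79}. Abstractly this amounts to the fact that $f$ is a retract of $g \circ f$ in the arrow category (via the identities on $A$ on top and $g$, $r$ on the bottom) together with closure of ``left lifting against all fibrations'' under retracts; but I would give the explicit filler above rather than invoke the general principle.
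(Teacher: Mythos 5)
Your proof is correct. The paper itself gives no argument for this lemma — it simply cites Shulman — and your derivation is exactly the standard one for the cited result: obtain a retraction $r$ of $g$ from Lemma \ref{org8c01c79}, transport any lifting problem for $f$ against a fibration to one for $gf$ by precomposing the bottom map with $r$, and compose the resulting filler with $g$ (equivalently, $f$ is a retract of $gf$ in the arrow category and left lifting properties are closed under retracts). Nothing is missing; every step uses only the definitions and lemmas already available in the paper.
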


\begin{lemma}
\label{sec-3-13-4}
\label{orgb900b60}
\cite[Lemma 12.2]{shulman2015inverse}
Let \(F : \cat{C} \to \cat{D}\) be a functor
between type-theoretic fibration categories.
If \(F\) preserves fibrations, pullbacks of fibrations and acyclic cofibrations,
then it preserves homotopy equivalences.
\end{lemma}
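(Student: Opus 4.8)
The plan is to show that, under the stated hypotheses, $F$ carries path objects to path objects; preservation of the homotopy relation, and hence of homotopy equivalences, then follows formally.

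First I would verify the path-object claim. Let $B \twoheadrightarrow \Gamma$ be a fibration and $B \overset{\sim}{\rightarrowtail} P_{\Gamma}B \twoheadrightarrow B \times_{\Gamma} B$ a path object over $\Gamma$. The square exhibiting $B \times_{\Gamma} B$ is a pullback of the fibration $B \twoheadrightarrow \Gamma$, so $F$ preserves it, the comparison map $F(B \times_{\Gamma} B) \to FB \times_{F\Gamma} FB$ is an isomorphism, and, since $F$ is a functor and a diagonal is determined by its two projections, $F$ sends the diagonal $B \to B \times_{\Gamma} B$ to the diagonal $FB \to FB \times_{F\Gamma} FB$. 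As $F$ also preserves fibrations and acyclic cofibrations, applying $F$ to the factorization above gives a path object $FB \overset{\sim}{\rightarrowtail} F(P_{\Gamma}B) \twoheadrightarrow FB \times_{F\Gamma} FB$ for $FB$ over $F\Gamma$. Hence, if $H : A \to P_{\Gamma}B$ is a homotopy from $f$ to $g$ over $\Gamma$, then $FH : FA \to F(P_{\Gamma}B)$ is a homotopy from $Ff$ to $Fg$ over $F\Gamma$, since the projections of $F(P_{\Gamma}B)$ are the $F$-images of those of $P_{\Gamma}B$.

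Now let $f : A \to B$ be a homotopy equivalence with inverse $g$, so $gf \sim 1_A$ and $fg \sim 1_B$, where the homotopies are taken over the terminal object $1$ of $\cat{C}$ (over which $B \twoheadrightarrow 1$ is a fibration, since every morphism with codomain $1$ is one). The previous step yields homotopies $F(g)F(f) \sim_{F1} 1_{FA}$ and $F(f)F(g) \sim_{F1} 1_{FB}$ over $F1$ in $\cat{D}$. Since $F$ need not preserve terminal objects, one small additional observation is required: a homotopy over an arbitrary object descends to one over the terminal object, which follows by factoring the composite $F(P_1 B) \twoheadrightarrow FB \times_{F1} FB \to FB \times FB$ (the second map being induced by the two projections) as an acyclic cofibration followed by a fibration, obtaining a genuine path object for $FB$ in $\cat{D}$ through which $FH$ factors. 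Therefore $F(g)F(f) \sim 1_{FA}$ and $F(f)F(g) \sim 1_{FB}$, so $Ff$ is a homotopy equivalence with inverse $Fg$. The only delicate point is the first step — checking that $F$ really lands in path objects, in particular the identification of $F(B \times_{\Gamma} B)$ and of the diagonal — together with the bookkeeping needed because the terminal object is not assumed preserved; everything else is formal.
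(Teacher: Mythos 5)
Your argument is correct: the paper itself gives no proof of this lemma (it is quoted from Shulman's Lemma 12.2), and your route — $F$ preserves path objects because it preserves the acyclic-cofibration/fibration factorization data and pullbacks of fibrations, hence it preserves homotopies and therefore homotopy equivalences — is exactly the standard argument behind the cited result. Your treatment of the one delicate point, that $F$ need not preserve the terminal object, is also sound: composing $FH$ with the acyclic cofibration into a factorization of $F(P_{1}B) \to FB \times FB$ yields a homotopy over $1_{\cat{D}}$, using only that acyclic cofibrations (being defined by a left lifting property) are closed under composition, so $FB \overset{\sim}{\rightarrowtail} F(P_{1}B) \overset{\sim}{\rightarrowtail} Q$ is again an acyclic cofibration sitting over the diagonal of $FB$.
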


\section{Fibred Type-Theoretic Fibration Categories}
\label{sec:org9d492c8}
\label{org3fd3ce8}
In this section we introduce
fibred type-theoretic fibration categories
and give standard constructions of them.
One construction is the \emph{fiberwise-to-total construction}:
given a fibred category whose base category and
all fibers are type-theoretic fibration categories,
we can make the total category a type-theoretic fibration category
under some extra conditions.
Another construction is the \emph{change of base}.
Change of base yields several examples of fibred type-theoretic fibration category,
as we shall see in Example \ref{orgf4dcb7d} and \ref{org38edd10}.
We also define the \emph{internal language} for a fibred type-theoretic fibration category
and explain the syntactic intuition of fibred type-theoretic fibration categories.
Finally we give a categorical definition of \emph{logical predicates}
for Martin-L\"{o}f type theory
and prove the ``basic lemma'' for logical predicates.

\begin{definition}
\label{sec-4-1}
\label{org204acfc}\label{orgec620d6}
A \emph{fibred type-theoretic fibration category} is a
fibred category \cite[Definition 1.1.3]{jacobs1999categorical}
\(p : \cat{E} \to \cat{B}\)
satisfying the following conditions.
\begin{enumerate}
\item The categories \(\cat{E}\) and \(\cat{B}\) are type-theoretic fibration categories
and \(p\) is a type-theoretic functor.
\item \label{org6d6527a}Every cartesian morphism above
a fibration
is a fibration.
\item \label{org2f1c729}In the following diagram
in \(\cat{E} \to \cat{B}\)
\[
\begin{tikzcd}
 [row sep=2ex] A \arrow[drr,twoheadrightarrow,"g"] \arrow[dr,dashrightarrow] &  &  \\
  & t^{*}C \arrow[r] & C \\
 I \arrow[dr,twoheadrightarrow,"s"'] \arrow[drr,twoheadrightarrow] &  &  \\
  & J \arrow[r,"t"'] & K, \end{tikzcd}
\]
if \(g\) and \(s\) are fibrations,
then the induced morphism
\(A \to t^{*}C\) above \(s\)
is a fibration.
\item \label{orgee4a1a2}Every cartesian morphism above
an acyclic cofibration
is an acyclic cofibration.
\end{enumerate}
\end{definition}

\subsection{Fiberwise-to-Total Construction}
\label{sec:orgd50ce84}

\begin{definition}
\label{sec-4-2-1}
Let \(p : \cat{C} \to \cat{D}\) be a functor.
A morphism \(f : A \to B\) in \(\cat{C}\) is \emph{weakly \(p\)-cartesian}
if for any morphisms \(g : X \to B\) and \(s : pX \to pA\)
such that \(pf \circ s = pg\),
there exists a morphism \(h : X \to A\)
such that \(f \circ h = g\) and \(ph = s\).
\end{definition}
\begin{theorem}
\label{sec-4-2-2}
\label{org24a9bbb}\label{org4e7cd71}
Suppose \(p : \cat{E} \to \cat{B}\)
is a fibred category satisfying the following conditions.
\begin{enumerate}
\item The base category \(\cat{B}\) and
all fibers \(\cat{E}_{I}\)
are type-theoretic fibration categories.
\item \label{org7c2e30a}
For every morphism \(s : I \to J\) in \(\cat{B}\),
the reindexing functor
\(s^{*} : \cat{E}_{J} \to \cat{E}_{I}\)
is a type-theoretic functor.
\item \label{orgd4716d9}\label{orga40836e}
For every acyclic cofibration
\(s : I \overset{\sim}{\rightarrowtail} J\) in \(\cat{B}\),
every fibration in \(\cat{E}_{J}\)
is weakly \(s^{*}\)-cartesian.
\item \label{org45c198d}
For any fibration \(s : I \twoheadrightarrow J\) in \(\cat{B}\),
the reindexing functor \(s^{*} : \cat{E}_{J} \to \cat{E}_{I}\)
has a right adjoint \(s_{*}\) preserving fibrations.
Moreover, the Beck-Chevalley condition holds:
if the following diagram is a pullback of a fibration
\[
\begin{tikzcd}
 I' \arrow[d,twoheadrightarrow,"s'"'] \arrow[r,"t"] & I \arrow[d,twoheadrightarrow,"s"] \\
 J' \arrow[r,"r"'] & J, \end{tikzcd}
\]
then the canonical natural transformation
\(r^{*}s_{*} \Rightarrow s'_{*}t^{*}\)
is an isomorphism.
\end{enumerate}
Then \(\cat{E}\) has a structure of
type-theoretic fibration category whose
fibrations are the Reedy fibrations defined below,
and \(p\) is a fibred type-theoretic fibration category.
\end{theorem}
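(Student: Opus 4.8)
The plan is to define the Reedy fibrations, verify the four axioms of Definition~\ref{org1cf19e2} for $\cat{E}$, and then check the four clauses of Definition~\ref{org204acfc} for $p$. Recall that the Reedy fibrations are to be the morphisms $f : A \to B$ of $\cat{E}$ such that $pf$ is a fibration in $\cat{B}$ and the induced vertical morphism $A \to (pf)^{*}B$ (the relative matching map, obtained from the cartesian lift of $pf$ at $B$) is a fibration in the fiber $\cat{E}_{pA}$. I would first record the easy facts: the terminal object of $\cat{E}$ is the terminal object of $\cat{E}_{1}$ (using that each $s^{*}$ preserves terminal objects, hypothesis~\ref{org7c2e30a}); isomorphisms and morphisms into $1$ are Reedy fibrations (their relative matching maps are isomorphisms, respectively maps into a fiberwise terminal object); and Reedy fibrations are closed under composition (the relative matching map of $gf$ factors as $m_{f}$ followed by $(pf)^{*}m_{g}$, a composite of fiber fibrations). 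For stability under pullback (axiom~\ref{orgffa6b99}), given a Reedy fibration $f : A \twoheadrightarrow B$ and any $g : B' \to B$, I would form the pullback $Q := pB' \times_{pB} pA$ in $\cat{B}$, pull the relative matching map of $f$ back along $Q \to pA$, pull the vertical part of $g$ back along $Q \twoheadrightarrow pB'$, and take their fiberwise pullback in $\cat{E}_{Q}$; since reindexing functors preserve pullbacks of fibrations, this is an honest pullback in $\cat{E}$ lying over $Q$, and its projection to $B'$ is a Reedy fibration. This simultaneously shows that $p$ preserves pullbacks of fibrations.

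The factorization axiom~\ref{orgd3d37e2} is the core of the proof, and the main obstacle. The key preliminary is that a cartesian morphism above an acyclic cofibration of $\cat{B}$ is an acyclic cofibration of $\cat{E}$, which is already clause~\ref{orgee4a1a2} of Definition~\ref{org204acfc}: a lifting problem against a Reedy fibration $X \twoheadrightarrow Y$ unwinds, using the cartesian lifts of the base maps, into a lifting problem in $\cat{B}$ along the acyclic cofibration together with a fiberwise lifting problem, and the latter is solvable precisely because the relevant fiber fibration is weakly $s^{*}$-cartesian (hypothesis~\ref{orgd4716d9}). The same unwinding shows that a fiberwise acyclic cofibration over an identity is an acyclic cofibration of $\cat{E}$, and (lifting against the cartesian Reedy fibration $1_{pX} \to 1_{pY}$ associated to a fibration of $\cat{B}$) that $p$ carries acyclic cofibrations of $\cat{E}$ to acyclic cofibrations of $\cat{B}$.

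Now, given $f : A \to B$ in $\cat{E}$, factor $pf = q \circ j$ in $\cat{B}$ with $j : pA \overset{\sim}{\rightarrowtail} K$ and $q : K \twoheadrightarrow pB$, and choose a retraction $r : K \to pA$ with $rj$ the identity, by Lemma~\ref{org8c01c79}. Put $A_{K} := r^{*}A \in \cat{E}_{K}$; then $j^{*}A_{K} \cong A$, and the cartesian lift $\gamma : A \to A_{K}$ above $j$ is an acyclic cofibration of $\cat{E}$ by the previous paragraph. The crucial move is to lift the relative matching map $A \cong j^{*}A_{K} \to j^{*}(q^{*}B)$ to a morphism $h : A_{K} \to q^{*}B$ in $\cat{E}_{K}$ whose image under $j^{*}$ is that map; since no left adjoint to $j^{*}$ is available (and $r^{*}$ of the relative matching map lands in the wrong codomain), I would invoke hypothesis~\ref{orgd4716d9} once more, applied to the fibration $q^{*}B \twoheadrightarrow 1_{K}$ of $\cat{E}_{K}$, whose weak $j^{*}$-cartesianness produces exactly such an $h$. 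Factor $h = \rho \circ c$ in the type-theoretic fibration category $\cat{E}_{K}$ with $c$ an acyclic cofibration and $\rho : W \twoheadrightarrow q^{*}B$ a fibration; then $f = (\alpha\rho) \circ (c\gamma)$, where $\alpha : q^{*}B \to B$ is the cartesian lift above $q$. Here $c\gamma$ is an acyclic cofibration of $\cat{E}$ (a composite of the two kinds established above), and $\alpha\rho$ is a Reedy fibration since $\rho$ is a fiber fibration and $\alpha$ is cartesian above a fibration; the identity $f = (\alpha\rho)(c\gamma)$ is checked by comparing vertical parts above $j$. Producing the lift $h$ — recognizing that weak cartesianness is what rectifies the mismatch — is where I expect the real work to lie.

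For the dependent-product axiom~\ref{orge8efea6}, given Reedy fibrations $f : A \twoheadrightarrow B$ and $g : B \twoheadrightarrow C$, one sets $P := \Pi_{pg}(pA) \twoheadrightarrow pC$ in $\cat{B}$, so that $\Pi_{g}A$ must lie in $\cat{E}_{P}$, and builds it over the fibration $(pg)^{*}P \twoheadrightarrow pB$ by applying the fiberwise dependent products of $\cat{E}_{(pg)^{*}P}$ to the pullback of $A$ along the evaluation morphism $(pg)^{*}P \to pA$, then transporting along the right adjoint of hypothesis~\ref{org45c198d} and the $\cat{B}$-adjunction defining $P$; the required natural bijection and the Reedy-fibrancy of $\Pi_{g}A \to C$ then follow from the two Beck--Chevalley conditions (Lemma~\ref{orgf15694c} and hypothesis~\ref{org45c198d}). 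This is a long but conceptually routine diagram chase. Finally, the clauses of Definition~\ref{org204acfc} for $p$ are now in hand: $p$ is type-theoretic by the constructions above (in particular $p(\Pi_{g}A) = \Pi_{pg}(pA)$, and $p$ preserves terminal objects, fibrations, their pullbacks, and acyclic cofibrations); clause~\ref{org6d6527a} holds because a cartesian morphism above a fibration has identity relative matching map; clause~\ref{org2f1c729} holds because the relative matching map of the induced morphism $A \to t^{*}C$ coincides with that of $g : A \to C$; and clause~\ref{orgee4a1a2} was established in the course of the factorization argument.
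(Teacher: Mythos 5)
Your proposal is correct and follows essentially the same route as the paper: the same Reedy fibrations, the same factorization (factor $pf$, transport $A$ along a cartesian lift over the acyclic cofibration using the retraction from Lemma \ref{org8c01c79}, extend via weak $j^{*}$-cartesianness of $q^{*}B \twoheadrightarrow 1_{K}$, then factor fiberwise), and the same dependent-product construction via fiberwise products of the matching maps, the pushforward of hypothesis \ref{org45c198d}, and the two Beck--Chevalley conditions. The only difference is expository: you verify the lifting properties and the clauses of Definition \ref{org204acfc} more explicitly than the paper, while leaving the universal-property check for $\Pi_{g}f$ (the paper's longest computation) as a sketched diagram chase.
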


\begin{example}
\label{sec-4-2-3}
Let \(\cat{C}\) be a type-theoretic fibration category.
Write \(\fibcat{\arcat{\cat{C}}}\) for
the full subcategory of the arrow category \(\arcat{\cat{C}}\)
where the objects are the fibrations in \(\cat{C}\).
Consider the codomain functor
\(\cod : \fibcat{\arcat{\cat{C}}} \to \cat{C}\).
This functor satisfies the hypotheses of Theorem \ref{org4e7cd71}.
Its fiber at an object \(A\) is \(\fibcat{\cat{C}/A}\)
and is a type-theoretic fibration category
as in Example \ref{org1eeac1b}.
By Example \ref{orgaabe3e5},
a reindexing functor is a type-theoretic functor.
The condition \ref{orgd4716d9} follows from
the fact that a pullback of an acyclic cofibration along
a fibration is an acyclic cofibration.
Finally, a right adjoint to the pullback functor along a fibration
is given by the dependent product.
Thus the codomain functor is a fibred type-theoretic fibration category.
A morphism \((f_{1}, f_{0}) : (\alpha : A_{1} \twoheadrightarrow A_{0})
\to (\beta : B_{1} \twoheadrightarrow B_{0})\)
in \(\fibcat{\arcat{\cat{C}}}\) is a fibration if and only if
\(f_{0} : A_{0} \to B_{0}\) is a fibration and
the induced morphism \(A_{1} \to B_{1} \times_{A_{0}} A_{1}\)
is a fibration.

Note that the fact that \(\fibcat{\arcat{\cat{C}}}\)
is a type-theoretic fibration category was
originally proved by Shulman \cite[Theorem 8.8]{shulman2015inverse}.
Our contribution is to give a fibred categorical description
for the construction.
\end{example}

\label{sec-4-2-4}
In the rest of the section
we prove Theorem \ref{org4e7cd71}.
Throughout the section
we assume \(p : \cat{E} \to \cat{B}\)
is a fibred category satisfying all the hypotheses
of Theorem \ref{org4e7cd71}.

\begin{definition}
\label{sec-4-2-4-1}
\label{orga37332a}\label{org16ed92e}
Let \(f : A \to B\) be a morphism in \(\cat{E}\).
The morphism \(f\) is a \emph{Reedy fibration} if
\(pf\) is a fibration in \(\cat{B}\)
and the induced morphism
\(A \to \left( pf \right)^{*}B\)
is a fibration in \(\cat{E}_{pA}\).
The morphism \(f\) is a \emph{Reedy acyclic cofibration}
if \(pf\) is
an acyclic cofibration in \(\cat{B}\) and
\(f\) factors as a cartesian morphism above \(pf\)
followed by an acyclic cofibration in \(\cat{E}_{pB}\).
\[
\begin{tikzcd}
 [row sep=3ex] A \arrow[dr,"f"] \arrow[d,twoheadrightarrow] &  &  & B \\
 (pf)^{*}B \arrow[r] & B & A \arrow[ur,"f"] \arrow[r,"\text{Cart.}"'] & C \arrow[u,rightarrowtail,"\sim","g"'] \\
 pA \arrow[r,twoheadrightarrow,"pf"] & pB & pA \arrow[r,rightarrowtail,"\sim","pf"'] & pB \end{tikzcd}
\]
\end{definition}

\begin{remark}
\label{sec-4-2-4-2}
This definition of Reedy fibrations coincide with Shulman's \cite[Definition 8.1]{shulman2015inverse}
in the case that \(p\) is a codomain functor
\(\fibcat{\arcat{\cat{C}}} \to \cat{C}\).
\end{remark}

\begin{lemma}
\label{sec-4-2-4-3}
\label{org74ddcdc}
Suppose \(s : I \overset{\sim}{\rightarrowtail} J\)
is an acyclic cofibration in \(\cat{B}\) and
\(A\) is an object above \(I\).
Then there exist an object \(s_{!}A\) above \(J\) and
a cartesian morphism \(\eta : A \to s_{!}A\) above \(s\).
\end{lemma}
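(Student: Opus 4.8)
The plan is to obtain $s_{!}A$ by reindexing $A$ along a retraction of $s$. Since $s : I \overset{\sim}{\rightarrowtail} J$ is an acyclic cofibration and $\cat{B}$ is a type-theoretic fibration category, Lemma~\ref{org8c01c79} provides a morphism $g : J \to I$ with $g \circ s = 1_{I}$. As $p$ is a fibred category, $g$ admits a cartesian lift at $A$, say $\bar{g} : g^{*}A \to A$ with $g^{*}A$ lying above $J$; I would set $s_{!}A := g^{*}A$.

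To construct $\eta$, observe that $g \circ s = 1_{I} = p(1_{A})$, so the cartesian property of $\bar{g}$, applied to the morphism $1_{A} : A \to A$ together with the factorisation $p(1_{A}) = g \circ s$, produces a unique $\eta : A \to g^{*}A$ above $s$ with $\bar{g} \circ \eta = 1_{A}$. This identity is what the rest of the argument runs on.

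It then remains to check that $\eta$ is cartesian above $s$. Given any $v : Y \to g^{*}A$ in $\cat{E}$ and any $w : pY \to I$ with $s \circ w = pv$, the candidate lift is $v' := \bar{g} \circ v$, which lies above $g \circ pv = g \circ s \circ w = w$. One verifies $\eta \circ v' = v$ by postcomposing with $\bar{g}$ and using $\bar{g} \circ \eta = 1_{A}$, together with the uniqueness half of the cartesian property of $\bar{g}$ (a morphism into $g^{*}A$ sitting above a fixed base map is determined by its composite with $\bar{g}$); the same uniqueness shows that any $v''$ with $\eta \circ v'' = v$ equals $\bar{g} \circ \eta \circ v'' = \bar{g} \circ v = v'$.

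The one point I would be careful about is not to be misled into thinking $\eta$ fails to be cartesian because $s$ need not be a split epimorphism: the cartesian condition only constrains those $v$ whose image in $\cat{B}$ factors through $s$, so there is no obstruction. Apart from that, the proof is a short diagram chase and uses only that $p$ is a fibred category and Lemma~\ref{org8c01c79}; the hypotheses~\ref{org7c2e30a}--\ref{org45c198d} of Theorem~\ref{org4e7cd71} are not needed for this lemma.
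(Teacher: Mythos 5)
Your proof is correct and takes essentially the same route as the paper: both use Lemma~\ref{org8c01c79} to get a retraction \(g\) of \(s\) and take \(s_{!}A\) to be the reindexing of \(A\) along \(g\), with \(\eta\) the induced map above \(s\). The only difference is cosmetic — the paper deduces cartesianness from \(A \cong s^{*}g^{*}A\) together with the cartesian lift of \(s\) at \(g^{*}A\), whereas you verify it directly from \(\bar{g} \circ \eta = 1_{A}\) and the uniqueness clause for \(\bar{g}\).
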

\begin{proof}
\label{sec-4-2-4-4}
By Lemma \ref{org8c01c79},
there exists a morphism
\(t : J \to I\) such that \(ts = 1\).
Therefore \(A \cong s^{*}t^{*}A\) above \(I\), and thus
we have a cartesian morphism
\(A \to t^{*}A\) above \(s\).
\end{proof}
\begin{lemma}
\label{sec-4-2-4-5}
\label{org5d31cb4}
Every cartesian morphism above an acyclic cofibration
has the left lifting property
with respect to all the vertical fibrations:
for every acyclic cofibration
\(s : I \overset{\sim}{\rightarrowtail} J\)
and cartesian morphism
\(f : A \to B\) above \(s\),
if \(g : C \twoheadrightarrow D\)
is a fibration in the following diagram
in \(\cat{E} \to \cat{B}\)
\[
\begin{tikzcd}
 [row sep=3ex] &  & C \arrow[d,twoheadrightarrow,"g"] \\
 A \arrow[r,"f"'] \arrow[urr] & B \arrow[r] & D \\
 I \arrow[r,rightarrowtail,"\sim","s"'] & J \arrow[r] & K, \end{tikzcd}
\]
then there exists a filling morphism
\(h : B \to C\).
\end{lemma}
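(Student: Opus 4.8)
The plan is to push the lifting problem down into the fibre over $J$ by reindexing along the morphism of $\cat{B}$ that $B \to D$ lies above, and then to read off the filler from the weak $s^{*}$-cartesianness of fibrations in $\cat{E}_{J}$ granted by condition~\ref{orga40836e}.

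In detail, write $u\colon J \to K$ for the image under $p$ of $B \to D$, so that $f$ lies above $s$, the morphism $B \to D$ above $u$, and $A \to C$ above $u \circ s$. Since the reindexing functor $u^{*}\colon \cat{E}_{K} \to \cat{E}_{J}$ is type-theoretic by condition~\ref{org7c2e30a}, the morphism $u^{*}g\colon u^{*}C \twoheadrightarrow u^{*}D$ is a fibration in $\cat{E}_{J}$. Factoring $B \to D$ through the cartesian lift $u^{*}D \to D$ above $u$ gives a vertical morphism $\bar\beta\colon B \to u^{*}D$ in $\cat{E}_{J}$. Using that $f$ is cartesian above $s$ to identify $A$ with $s^{*}B$ (and $f$ with the cartesian lift $s^{*}B \to B$), and using the canonical identification $s^{*}(u^{*}C) \cong (u \circ s)^{*}C$, factor $A \to C$ through the cartesian lift $(u \circ s)^{*}C \to C$ to obtain a vertical morphism $\bar\alpha\colon A \to s^{*}(u^{*}C)$ in $\cat{E}_{I}$.

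Now the core step: condition~\ref{orga40836e} says the fibration $u^{*}g$ in $\cat{E}_{J}$ is weakly $s^{*}$-cartesian. Apply this to $\bar\beta\colon B \to u^{*}D$ together with $\bar\alpha\colon s^{*}B \cong A \to s^{*}(u^{*}C)$, after checking that $s^{*}(u^{*}g) \circ \bar\alpha$ agrees with $s^{*}\bar\beta$; this compatibility is just a repackaging of the hypothesis $g \circ (A \to C) = (B \to D) \circ f$, obtained by composing both sides with the cartesian lift $(u \circ s)^{*}D \to D$ and invoking uniqueness of cartesian factorizations over $u \circ s$. Weak $s^{*}$-cartesianness then produces a vertical morphism $h'\colon B \to u^{*}C$ with $u^{*}g \circ h' = \bar\beta$ and $s^{*}h' = \bar\alpha$. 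Composing $h'$ with the cartesian lift $u^{*}C \to C$ over $u$ gives the filler $h\colon B \to C$ over $u$; the equation $g \circ h = (B \to D)$ falls out of $u^{*}g \circ h' = \bar\beta$ and naturality of the cartesian lifts, and $h \circ f = (A \to C)$ falls out of $s^{*}h' = \bar\alpha$ together with the identification $s^{*}(u^{*}C) \cong (u \circ s)^{*}C$ and, once more, uniqueness of cartesian factorizations.

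I expect the substance of the argument to lie entirely in the bookkeeping: keeping the three reindexings $s^{*}$, $u^{*}$ and $(u \circ s)^{*}$ and the coherence isomorphisms between them straight, and verifying the single compatibility equation that licenses the appeal to weak $s^{*}$-cartesianness; everything past that point is formal manipulation of cartesian morphisms. An alternative route would first split $s$ via Lemma~\ref{org8c01c79} and construct the needed reindexings as in Lemma~\ref{org74ddcdc}, but reindexing along $u$ and leaning directly on condition~\ref{orga40836e} looks cleanest.
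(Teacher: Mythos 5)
Your proposal is correct and matches the paper's proof, which simply says ``by reindexing, we can assume $K = J$'' (i.e.\ pull $g$ back along $u$) and then observes the statement is exactly condition~\ref{orgd4716d9}; you have just written out the cartesian-factorization bookkeeping that the paper leaves implicit.
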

\begin{proof}
\label{sec-4-2-4-6}
By reindexing, we can assume \(K = J\) and \(J \to K\) is the identity.
Then the statement is equivalent to the condition \ref{orgd4716d9}.
\end{proof}
\begin{lemma}
\label{sec-4-2-4-7}
\label{org1b5b6ac}
Every morphism \(f : A \to B\) in \(\cat{E}\)
factors as
a Reedy acyclic cofibration followed by
a  Reedy fibration.
\end{lemma}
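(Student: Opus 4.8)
The plan is to reduce the factorization problem in $\cat{E}$ to a factorization in the base $\cat{B}$ together with one performed inside a single fibre, the two being bridged by a cartesian lift and a lifting property. Since $\cat{B}$ is a type-theoretic fibration category I would first factor $pf = q \circ s$ with $s \colon pA \overset{\sim}{\rightarrowtail} I$ an acyclic cofibration and $q \colon I \twoheadrightarrow pB$ a fibration. Choose a cartesian morphism $\kappa \colon q^{*}B \to B$ above $q$; since $pf = q \circ s$, the morphism $f$ factors uniquely through $\kappa$ as $f = \kappa \circ f_{1}$ with $f_{1} \colon A \to q^{*}B$ above $s$. Applying Lemma \ref{org74ddcdc} to the acyclic cofibration $s$ and the object $A$ gives an object $s_{!}A$ above $I$ and a cartesian morphism $\eta \colon A \to s_{!}A$ above $s$.

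Next I would construct a vertical morphism $g_{1} \colon s_{!}A \to q^{*}B$ in the fibre $\cat{E}_{I}$ with $g_{1} \circ \eta = f_{1}$. Consider the square in $\cat{E} \to \cat{B}$ with left edge $\eta$, right edge the canonical morphism from $q^{*}B$ to the terminal object of $\cat{E}_{I}$ (a vertical fibration, as any morphism into a terminal object is a fibration), top edge $f_{1}$, and bottom edge the evident morphisms into the terminal object of $\cat{E}_{I}$. This square commutes because the reindexing functor $s^{*} \colon \cat{E}_{I} \to \cat{E}_{pA}$ is type-theoretic, hence preserves terminal objects, so there is a unique morphism from $A$ into the terminal object of $\cat{E}_{I}$ above $s$ and both composites are that morphism. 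Since $\eta$ is cartesian above the acyclic cofibration $s$, Lemma \ref{org5d31cb4} supplies the filler $g_{1}$. I would then factor $g_{1}$ inside the type-theoretic fibration category $\cat{E}_{I}$ as an acyclic cofibration $s_{!}A \overset{\sim}{\rightarrowtail} C$ followed by a fibration $C \twoheadrightarrow q^{*}B$, and claim that the composite of $A \xrightarrow{\eta} s_{!}A \overset{\sim}{\rightarrowtail} C$ with $C \twoheadrightarrow q^{*}B \xrightarrow{\kappa} B$ is the required factorization of $f$. This composite is $\kappa \circ g_{1} \circ \eta = \kappa \circ f_{1} = f$. The first map is a Reedy acyclic cofibration in the sense of Definition \ref{org16ed92e}: its $p$-image is the acyclic cofibration $s$, and it is already displayed as a cartesian morphism above $s$ followed by an acyclic cofibration inside $\cat{E}_{I} = \cat{E}_{pC}$. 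The second map is a Reedy fibration: its $p$-image is the fibration $q$, and, since $\kappa$ is cartesian above $q$, the induced vertical morphism $C \to q^{*}B$ is exactly the fibration $C \twoheadrightarrow q^{*}B$ just produced.

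The step I expect to be the main obstacle is precisely the construction of the fibrewise comparison map $g_{1}$ out of the cartesian morphism $\eta$: cartesianness of $\eta$ only lets one lift morphisms \emph{into} $s_{!}A$ down to $A$, so it does not by itself yield a morphism \emph{out of} $s_{!}A$. The point is that, recast as a lifting problem against the terminal vertical fibration of $\cat{E}_{I}$, what is needed is exactly the lift provided by Lemma \ref{org5d31cb4}, equivalently by the hypothesis \ref{orgd4716d9} that every fibration in a fibre $\cat{E}_{J}$ is weakly $s^{*}$-cartesian. Once $g_{1}$ is in hand the remainder is bookkeeping with the definition of Reedy (acyclic) cofibration and the universal property of cartesian morphisms.
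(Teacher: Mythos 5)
Your proof is correct and follows essentially the same route as the paper: factor $pf$ in $\cat{B}$, take the cartesian lift $\eta$ via Lemma \ref{org74ddcdc}, use the lifting property of Lemma \ref{org5d31cb4} against the terminal fibration $q^{*}B \twoheadrightarrow 1$ in the fibre to obtain the vertical comparison map, and then factor that map fibrewise. Your added details (commutativity of the square via preservation of terminal objects, and the explicit check of the Reedy conditions) are correct and only make explicit what the paper leaves implicit.
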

\begin{proof}
\label{sec-4-2-4-8}
Let \(s = pf : I \to J\).
Then \(s = r \circ t\) for some acyclic cofibration
\(t : I \overset{\sim}{\rightarrowtail} K\) and fibration
\(r : K \twoheadrightarrow J\).
Using the lifting property in Lemma \ref{org5d31cb4}
with respect to the fibration \(r^{*}B \twoheadrightarrow 1\)
in \(\cat{E}_{K}\),
we have a morphism
\(t_{!}A \to r^{*}B\) in
\(\cat{E}_{K}\) such that the following diagram commutes
\[
\begin{tikzcd}
 [row sep=3ex] & t_{!}A \arrow[dd,dashrightarrow] &  \\
 A \arrow[ur] \arrow[dr] \arrow[rr,near end,"f"] &  & B \\
  & r^{*}B \arrow[ur] &  \\
 I \arrow[r,rightarrowtail,"\sim","t"'] & K \arrow[r,twoheadrightarrow,"r"'] & J. \end{tikzcd}
\]
Taking a factorization in \(\cat{E}_{K}\),
we have an acyclic cofibration
\(t_{!}A \overset{\sim}{\rightarrowtail} C\)
followed by a fibration
\(C \twoheadrightarrow r^{*}B\).
The morphisms \(A\) to \(C\)
and \(C\) to \(B\) give the desired factorization.
\end{proof}
\begin{lemma}
\label{sec-4-2-4-9}
\label{orgef4cf25}\label{org1e0b166}
The total category \(\cat{E}\) has a dependent product of a fibration
along a fibration.
\end{lemma}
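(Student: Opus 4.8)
The plan is to reduce the construction to dependent products inside the fibres — which exist since the fibres are type-theoretic fibration categories — and to the right adjoints $s_{*}$ of hypothesis \ref{org45c198d}. Write $I=pA$, $J=pB$, $K=pC$, $r=pf$, $s=pg$, and recall the formal fact that dependent products compose: if $\Pi_{h}$ and $\Pi_{k}$ exist for composable fibrations $h,k$, then $\Pi_{kh}$ exists and $\Pi_{kh}\cong\Pi_{k}\Pi_{h}$, because $(kh)^{*}\cong h^{*}k^{*}$. Since $g$ is a Reedy fibration it factors as $B\xrightarrow{\gamma}s^{*}C\xrightarrow{c}C$ with $\gamma$ a fibration in $\cat{E}_{J}$ and $c$ a cartesian lift of $C$ along $s$; hence $\Pi_{g}f\cong\Pi_{c}(\Pi_{\gamma}f)$, and it suffices to construct the dependent product in two cases: (A) along a vertical fibration, and (B) along a cartesian morphism lying over a fibration of $\cat{B}$.

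For (A), write the Reedy fibration $f$ as $A\xrightarrow{\phi}r^{*}B\to B$ with $\phi$ a fibration in $\cat{E}_{I}$, and let $D$ be the codomain of $\gamma$. Inside the type-theoretic fibration category $\cat{E}_{I}$ form the dependent product of $\phi$ along the fibration $r^{*}\gamma:r^{*}B\twoheadrightarrow r^{*}D$; then $\Pi_{\gamma}f$ is the composite $\Pi_{r^{*}\gamma}A\twoheadrightarrow r^{*}D\to D$ with the cartesian lift, which is a Reedy fibration over $r$. For (B), let $c:s^{*}C\twoheadrightarrow C$ be a cartesian lift over a fibration $s:J\twoheadrightarrow K$ and $q:Q\twoheadrightarrow s^{*}C$ a Reedy fibration. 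In the type-theoretic fibration category $\cat{B}$ form the dependent product $\hat{\pi}:=\Pi_{s}(r):\hat{I}\twoheadrightarrow K$, with evaluation $\mathrm{ev}:s^{*}\hat{I}\to I$ over $J$, and let $\pi:s^{*}\hat{I}\twoheadrightarrow\hat{I}$ be the projection, which is a pullback of $s$. Reindexing $Q$ along $\mathrm{ev}$ and factoring the induced morphism to $s^{*}C$ through a cartesian lift gives a fibration $\mathrm{ev}^{*}Q\twoheadrightarrow\pi^{*}(\hat{\pi}^{*}C)$ in $\cat{E}_{s^{*}\hat{I}}$. Applying $(\pi)_{*}$ from hypothesis \ref{org45c198d} and pulling back along the unit $\hat{\pi}^{*}C\to\pi_{*}\pi^{*}(\hat{\pi}^{*}C)$ produces $\Pi_{c}Q\in\cat{E}_{\hat{I}}$ with a fibration to $\hat{\pi}^{*}C$ — since $(\pi)_{*}$ preserves fibrations and fibrations are pullback-stable — whose composite with the cartesian lift $\hat{\pi}^{*}C\to C$ is the Reedy fibration $\Pi_{c}q:\Pi_{c}Q\twoheadrightarrow C$; the dependent product $\Pi_{g}f$ is then $\Pi_{c}(\Pi_{\gamma}f)$.

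What remains, and what I expect to be the main obstacle, is verifying the universal property, say $\cat{E}/C(h,\Pi_{c}q)\cong\cat{E}/(s^{*}C)(c^{*}h,q)$ naturally in $h:X\to C$. On underlying base components the bijection is the adjunction $s^{*}\dashv\Pi_{s}$ in $\cat{B}$ together with the Beck--Chevalley isomorphism of Lemma \ref{orgf15694c}; for each base component the fibre data are matched by transposing along $\pi^{*}\dashv(\pi)_{*}$ and along the fibrewise dependent-product adjunctions, using that reindexing functors are type-theoretic (hypothesis \ref{org7c2e30a}) and therefore preserve dependent products, the Beck--Chevalley isomorphism of hypothesis \ref{org45c198d} for the base-change of $\pi$ along the base component of $h$, and the identification of the fibred pullback $c^{*}h$ (respectively $\gamma^{*}h$ in case (A)) with a reindexed fibrewise pullback. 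None of these transpositions is difficult individually, but chaining them into one natural bijection and checking that the unit and counit data agree on the two sides is where the real work lies. Combining Cases (A) and (B) via $\Pi_{g}f\cong\Pi_{c}(\Pi_{\gamma}f)$ then yields the dependent product of $f$ along $g$.
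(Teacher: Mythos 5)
Your construction, once unfolded, is exactly the object the paper builds: your case (A) produces the fibrewise dependent product (the paper's $\Pi_{s^{*}\langle g\rangle}A$ over $s^{*}t^{*}C$, writing $f$ above $s$ and $g$ above $t$ as the paper does), and your case (B) applied to it is precisely the paper's $\eta^{*}\bar{t}_{*}\varepsilon^{*}\Pi_{s^{*}\langle g\rangle}A \twoheadrightarrow (\Pi_{t}s)^{*}C \twoheadrightarrow C$, with your $\mathrm{ev}$, $\pi$, $\pi_{*}$ playing the roles of $\varepsilon$, $\bar{t}$, $\bar{t}_{*}$. The difference is organizational: the paper writes the composite down in one step and verifies its universal property directly through the single chain of correspondences $y\mapsto y_{1}\mapsto y_{2}\mapsto y_{3}\mapsto y_{4}\mapsto\hat{y}$, whereas you factor the Reedy fibration $g$ as a vertical fibration followed by a cartesian morphism over a base fibration and invoke $\Pi_{c\gamma}\cong\Pi_{c}\Pi_{\gamma}$, so the universal property splits into two independent, smaller checks: a purely fibrewise one (hypothesis \ref{org7c2e30a}, preservation of dependent products by reindexing, and computing the pullback along a vertical fibration fibrewise) and a purely base-direction one ($\pi^{*}\dashv\pi_{*}$ plus the Beck--Chevalley condition of hypothesis \ref{org45c198d}). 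That modularity is a real gain in readability, and each of your cases is simpler than the combined chain. The caveat is that what you defer---actually chaining the transpositions and checking naturality and compatibility with the cartesian factorizations---is exactly where the paper spends most of its proof; your outline names the same isomorphisms the paper uses (Beck--Chevalley for $\pi_{*}$ against the base component of the test morphism, preservation of fibrewise dependent products by reindexing, the identification of $c^{*}h$ and $\gamma^{*}h$ with reindexed fibrewise pullbacks), so nothing will fail, but a complete proof still requires writing out the two bijections. One small correction: Lemma \ref{orgf15694c} is not what handles the base components---there the bijection is just the adjunction $t^{*}\dashv\Pi_{t}$ in $\cat{B}$; the Beck--Chevalley isomorphism you genuinely need is the one for $\pi_{*}$ supplied by hypothesis \ref{org45c198d}.
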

\begin{proof}
\label{sec-4-2-4-10}
Suppose \(f : A \twoheadrightarrow B\) and \(g : B \twoheadrightarrow C\)
are fibrations above
\(s : I \twoheadrightarrow J\) and \(t : J \twoheadrightarrow K\) respectively.
We construct a fibration \(\Pi_{g}f : \Pi_{g}A \twoheadrightarrow C\)
above \(\Pi_{t}s : \Pi_{t}I \twoheadrightarrow K\).
First we get a dependent product
\(\Pi_{s^{*}\langle g \rangle}\langle f \rangle : \Pi_{s^{*}\langle g \rangle}A \twoheadrightarrow s^{*}t^{*}C\) above \(I\)
where \(\langle f \rangle : A \twoheadrightarrow s^{*}B\) and \(\langle g \rangle : B \twoheadrightarrow t^{*}C\)
are the morphisms induced by cartesianness.
Let \(\varepsilon : t^{*}\Pi_{t}I \to I\)
be the counit of the adjunction \(t^{*} \dashv \Pi_{t}\)
and \(\bar{t} : t^{*}\Pi_{t}I \twoheadrightarrow \Pi_{t}I\)
the upper fibration of the pullback of \(\Pi_{t}I\) along \(t\).
\[
\begin{tikzcd}
 [row sep=3ex] t^{*} \Pi_{t}I \arrow[r,twoheadrightarrow,"\bar{t}"] \arrow[d,"\varepsilon"'] & \Pi_{t}I \arrow[dd,twoheadrightarrow,"\Pi_{t}s"] \\
 I \arrow[d,twoheadrightarrow,"s"'] &  \\
 J \arrow[r,twoheadrightarrow,"t"'] & K \end{tikzcd}
\]
Since \(ts\varepsilon = (\Pi_{t}s)\bar{t}\),
we have \(\varepsilon^{*}s^{*}t^{*}C \cong \bar{t}^{*}(\Pi_{t}s)^{*}C\).
Thus we have the unit
\(\eta : (\Pi_{t}s)^{*}C \to \bar{t}_{*}\varepsilon^{*}s^{*}t^{*}C\)
of the adjunction \(\bar{t}^{*} \dashv \bar{t}_{*}\).
Then the composition
\(\eta^{*}\bar{t}_{*}\varepsilon^{*}\Pi_{s^{*}\langle g \rangle}A
\twoheadrightarrow \left( \Pi_{t}s \right)^{*}C
\twoheadrightarrow C\)
is a Reedy fibration.
We show that it satisfies the universal property of dependent product.

Let \(x : X \to C\) be a morphism in \(\cat{E}\) above \(a : L \to K\).
Let \(b : L \to \Pi_{t}I\) be a morphism over \(K\)
which corresponds to \(\hat{b} : t^{*}L \to I\) over \(J\)
as \(\varepsilon \circ t^{*}b = \hat{b}\).
The goal is to construct a natural bijection between
the set of morphisms \(X \to \eta^{*}\bar{t}_{*}\varepsilon^{*}\Pi_{s^{*}\langle g \rangle}A\)
above \(b\) over \(C\) and
the set of morphisms \(g^{*}X \to A\)
above \(\hat{b}\) over \(B\).
A morphism \(y : X \to \eta^{*}\bar{t}_{*}\varepsilon^{*}\Pi_{s^{*}\langle g \rangle}A\)
above \(b\) over \(C\) corresponds to a morphism \(y_{1} : X \to b^{*}\bar{t}_{*}\varepsilon^{*}\Pi_{s^{*}\langle g \rangle}A\)
above \(L\) such that
\[
\begin{tikzcd}
 X \arrow[r,"y_{1}"] \arrow[d] & b^{*}\bar{t}_{*} \varepsilon^{*} \Pi_{s^{*}\langle g \rangle}A \arrow[d,twoheadrightarrow] \\
 b^{*}(\Pi_{t}s)^{*}C \ar[r,"b^{*} \eta"'] & b^{*}\bar{t}_{*}\bar{t}^{*}(\Pi_{t}s)^{*}C \end{tikzcd}
\]
commutes.
Consider the pullbacks
\[
\begin{tikzcd}
 t^{*}L \ar[r]^-{\bar{b}} \ar[d,twoheadrightarrow,"\tilde{t}"'] & t^{*} \Pi_{t}I \arrow[r,twoheadrightarrow] \ar[d,twoheadrightarrow,"\bar{t}"'] & J \arrow[d,twoheadrightarrow,"t"] \\
 L \arrow[r,"b"'] & \Pi_{t}I \arrow[r,twoheadrightarrow,"\Pi_{t}s"'] & K. \end{tikzcd}
\]
Then \(b^{*}\eta\) is isomorphic to
\(\eta_{b^{*}(\Pi_{t}s)^{*}C} : b^{*}(\Pi_{t}s)C \to \tilde{t}_{*}\tilde{t}^{*}b^{*}(\Pi_{t}s)^{*}C\)
along \(b^{*}\bar{t}_{*}\bar{t}^{*} \cong \tilde{t}_{*}\bar{b}^{*}\bar{t}^{*} \cong \tilde{t}_{*}\tilde{t}^{*}b^{*}\)
by the Beck-Chevalley condition.
Thus the morphism \(y_{1}\) corresponds to
a morphism \(y_{2} : \tilde{t}^{*}X \to \bar{b}^{*}\varepsilon^{*}\Pi_{s^{*}\langle g \rangle}A\)
over \(\tilde{t}^{*}b^{*}(\Pi_{t}s)^{*}C
\cong \bar{b}^{*}\bar{t}^{*}(\Pi_{t}s)^{*}C
\cong \bar{b}^{*}\varepsilon^{*}s^{*}t^{*}C\).
Since \(\varepsilon \circ \bar{b} = \hat{b}\),
\(y_{2}\) corresponds to a morphism
\(y_{3} : \tilde{t}^{*}X \to \hat{b}^{*}\Pi_{s^{*}\langle g \rangle}A\)
over \(\hat{b}^{*}s^{*}t^{*}C\).
By assumption the reindexing functor \(\hat{b}^{*}\) preserves dependent products,
and thus \(\hat{b}^{*}\Pi_{s^{*}\langle g \rangle}A
\cong \Pi_{\hat{b}^{*}s^{*}\langle g \rangle}(\hat{b}^{*}A)\).
Since \(s \circ \hat{b} = t^{*}a : t^{*}L \to J\),
\(y_{3}\) corresponds to a morphism
\(y_{4} : (\bar{a}^{*}\langle g \rangle)^{*}\tilde{t}^{*}X \to \hat{b}^{*}A\)
over \(\hat{b}^{*}s^{*}B\) where \(\bar{a} = t^{*}a\).
Observe that \((\bar{a}^{*}\langle g \rangle)^{*}\tilde{t}^{*}X\)
is a pullback of \(X\) along \(g : B \twoheadrightarrow C\) in \(\cat{E}\).
Thus \(y_{4}\) corresponds to a morphism \(\hat{y} : g^{*}X \to A\)
above \(\hat{b}\) over \(B\).
\end{proof}
\begin{proof}[{Proof of Theorem \ref{org24a9bbb}}]
\label{sec-4-2-4-11}
Clearly \(\cat{E}\) has a terminal object and
Reedy fibrations are closed under pullbacks.
We can easily show that
Reedy acyclic cofibrations have the left lifting property
with respect to all the Reedy fibrations.
Thus Lemmas
\ref{org1b5b6ac} and \ref{orgef4cf25}
show that \(\cat{E}\)
is a type-theoretic fibration category whose
fibrations are the Reedy fibrations.
The other conditions of Definition \ref{org204acfc}
are clear by definition.
\end{proof}

\subsection{Basic Properties}
\label{sec:org0d18755}
We give some basic properties of fibred type-theoretic fibration categories.
Throughout the section,
let \(p : \cat{E} \to \cat{B}\) be
a fibred type-theoretic fibration category.
\begin{proposition}
\label{sec-4-3-1}
\label{org4fa27cf}
Given the following diagram in \(\cat{E} \to \cat{B}\)
\[
\begin{tikzcd}
 [row sep=2ex] A \arrow[rr] \arrow[dr,rightarrowtail,"\sim"] &  & C \arrow[dr,twoheadrightarrow] &  \\
  & B \arrow[rr] &  & D \\
 I \arrow[rr,"s"] \arrow[dr,rightarrowtail,"\sim"] &  & K \arrow[dr,twoheadrightarrow] &  \\
  & J \arrow[rr,"t"'] \arrow[ur,"r"'] &  & L, \end{tikzcd}
\]
then there exists a filling morphism \(h : B \to C\)
above \(r\).
\end{proposition}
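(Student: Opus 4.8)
The plan is to reduce the statement to an ordinary lifting problem \emph{inside} $\cat{E}$, by replacing the fibration $C \twoheadrightarrow D$ with a pullback of it whose codomain lies over $J$; a filler for such a pullback, obtained from the lifting property already available in $\cat{E}$, will automatically project to $r$ because its base component is forced to be $\mathrm{id}_{J}$. Throughout write $i : I \overset{\sim}{\rightarrowtail} J$, $s : I \to K$, $t : J \to L$ and $q : K \twoheadrightarrow L$ for the $p$-images of $A \overset{\sim}{\rightarrowtail} B$, $A \to C$, $B \to D$ and $C \twoheadrightarrow D$; the hypotheses give $s = r i$ and $t = q r$, whence $q s = t i$.

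First I would choose a cartesian lift $\bar{q} : q^{*}D \to D$ of $D$ along $q$. Since $q$ is a fibration, $\bar{q}$ is a fibration in $\cat{E}$ by the second condition of Definition~\ref{org204acfc}, and $C \twoheadrightarrow D$ factors through it as $C \xrightarrow{w} q^{*}D \xrightarrow{\bar{q}} D$ with $w$ vertical over $K$; by the third condition of Definition~\ref{org204acfc}, applied to the trivial factorization $q = q \circ \mathrm{id}_{K}$, the morphism $w$ is again a fibration in $\cat{E}$. Next, a cartesian lift $\lambda : t^{*}D \to D$ of $D$ along $t$ factors, using $t = q r$ and the fact that composites of cartesian morphisms are cartesian, as $t^{*}D \xrightarrow{\kappa} q^{*}D \xrightarrow{\bar{q}} D$ with $\kappa$ cartesian over $r$. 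Now form in $\cat{E}$ the pullback $E := C \times_{q^{*}D} t^{*}D$ of the fibration $w$ along $\kappa$, which exists by the pullback axiom of Definition~\ref{org1cf19e2}; let $\pi_{C} : E \to C$ and $\pi : E \twoheadrightarrow t^{*}D$ be its legs, $\pi$ a fibration in $\cat{E}$. Since $p$ is a type-theoretic functor it preserves this pullback, and since $p w = \mathrm{id}_{K}$ and $p\kappa = r$, the $p$-image of the pullback square is the pullback of $\mathrm{id}_{K}$ along $r$; hence $pE = J$, $p\pi = \mathrm{id}_{J}$, $p\pi_{C} = r$, and moreover $\lambda \pi = (C \twoheadrightarrow D) \circ \pi_{C}$.

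Then I would reassemble the original square as a lifting problem against $\pi$ in $\cat{E}$. Factoring $(C \twoheadrightarrow D) \circ (A \to C) : A \to D$ through $\lambda$ — possible over $i$, since its $p$-image is $q s = t \circ i$ — produces a map $A \to t^{*}D$ over $i$ that, together with $A \to C$, satisfies the compatibility (checked via the uniqueness clause for the cartesian $\bar{q}$) needed to induce a unique $u' : A \to E$ over $i$ with $\pi_{C} u' = (A \to C)$; factoring $B \to D$ through $\lambda$ gives a vertical map $v' : B \to t^{*}D$. Using uniqueness for the cartesian $\lambda$ one checks $\pi u' = v' \circ (A \overset{\sim}{\rightarrowtail} B)$, so $(u',\, A \overset{\sim}{\rightarrowtail} B,\, \pi,\, v')$ is a commuting square in $\cat{E}$. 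As $A \overset{\sim}{\rightarrowtail} B$ is an acyclic cofibration and $\pi$ a fibration in $\cat{E}$, this square has a filler $h' : B \to E$. Applying $p$ to $\pi h' = v'$ and using $p\pi = \mathrm{id}_{J} = p v'$ forces $p h' = \mathrm{id}_{J}$; hence $h := \pi_{C} h' : B \to C$ lies over $r \circ \mathrm{id}_{J} = r$, and $h \circ (A \overset{\sim}{\rightarrowtail} B) = \pi_{C} u' = (A \to C)$ while $(C \twoheadrightarrow D) \circ h = \lambda \pi h' = \lambda v' = (B \to D)$, so $h$ is the required filling morphism above $r$.

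The hard part will be the bookkeeping in the middle step: routing the reindexing through the auxiliary object $q^{*}D$ so that the pullback $E$ genuinely lies over $J$ and $\pi$ is a fibration in $\cat{E}$ — not merely a fibrewise one — and so that $p$ actually computes $E$. The identity $p\pi = \mathrm{id}_{J}$ is the real content, since it is precisely what forces the ad hoc lift $h'$, whose base component is a priori uncontrolled, to be vertical, and hence $h$ to project to the prescribed $r$. Everything else is a routine diagram chase with the universal properties of cartesian morphisms and of the pullback.
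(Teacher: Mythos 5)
Your proof is correct and follows essentially the same route as the paper: reindex along $r$ and $t$ to obtain a vertical fibration over $J$ (your $\pi : E \twoheadrightarrow t^{*}D$ plays the role of the paper's $r^{*}C \twoheadrightarrow t^{*}D$), solve the resulting lifting problem in $\cat{E}$ against the acyclic cofibration $A \overset{\sim}{\rightarrowtail} B$, observe that the filler is forced to be vertical, and compose with the cartesian morphism to $C$. The only difference is that you spell out, via condition \ref{org2f1c729} of Definition \ref{org204acfc} and pullback-stability of fibrations, why the reindexed comparison map is a fibration in $\cat{E}$, a point the paper's one-line ``by reindexing'' argument leaves implicit.
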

\begin{proof}
\label{sec-4-3-2}
By reindexing, we have the following diagram
in \(\cat{E} \to \cat{B}\)
\[
\begin{tikzcd}
 [row sep=2ex] A \arrow[rr] \arrow[dr,rightarrowtail,"\sim"] &  & r^{*}C \arrow[dr,twoheadrightarrow] &  \\
  & B \arrow[rr] &  & t^{*}D \\
 I \arrow[rr,rightarrowtail,"\sim"] \arrow[dr,rightarrowtail,"\sim"'] &  & J \arrow[dr,equal] &  \\
  & J \arrow[rr,equal] &  & J. \end{tikzcd}
\]
Then we have a filling morphism \(k : B \to r^{*}C\),
and \(pk\) must be the identity.
The composition \(B \to r^{*}C \to C\)
is a filling morphism above \(r\).
\end{proof}
\begin{lemma}
\label{sec-4-3-3}
\label{org4b23fdf}
Let \(f : A \to B\) be a morphism in a fiber \(\cat{E}_{I}\).
Then \(f\) factors as an acyclic cofibration above \(I\)
followed by a fibration above \(I\).
\end{lemma}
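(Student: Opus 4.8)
The statement is that any morphism $f : A \to B$ in a fiber $\cat{E}_I$ factors as an acyclic cofibration in $\cat{E}_I$ followed by a fibration in $\cat{E}_I$. Since $\cat{E}_I$ is itself a type-theoretic fibration category (this is part of the hypothesis carried through the section — $p : \cat{E} \to \cat{B}$ is a fibred type-theoretic fibration category, so in particular via Theorem sec-4-2-2 each fiber is one, or it is assumed directly in Definition sec-4-1 together with the fact that reindexing preserves the structure), a factorization of $f$ as an acyclic cofibration followed by a fibration *inside* $\cat{E}_I$ exists by axiom 3 of Definition sec-3-2. So the only content is the compatibility: an acyclic cofibration computed in the fiber $\cat{E}_I$ is still an acyclic cofibration in the total category $\cat{E}$, and likewise a fibration in $\cat{E}_I$ (a "vertical fibration") is a fibration in $\cat{E}$. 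Wait — rereading, the lemma only asserts the factorization *within* $\cat{E}_I$, with the words "above $I$" meaning the factoring morphisms live in the fiber. So this is essentially immediate.

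Let me reconsider what needs proving. The plan is: first, apply the factorization axiom (condition 3 of Definition sec-3-2) to $f$ in the type-theoretic fibration category $\cat{E}_I$, obtaining $A \overset{\sim}{\rightarrowtail} C \twoheadrightarrow B$ in $\cat{E}_I$. The morphism $C \twoheadrightarrow B$ is by construction a fibration in $\cat{E}_I$, so there is nothing further to check on that side. For the cofibration side, the subtlety is that "acyclic cofibration in $\cat{E}_I$" means it has the left lifting property against all fibrations *of $\cat{E}_I$*; but the statement of the lemma presumably wants it to be an acyclic cofibration in $\cat{E}$ as well (otherwise the lemma would be vacuous). So the real work is: show that a morphism $g : A \to C$ in $\cat{E}_I$ that has the left lifting property against all vertical fibrations in fact has the left lifting property against all Reedy fibrations $h : X \twoheadrightarrow Y$ in $\cat{E}$.

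For that key step I would argue as follows. Given a lifting problem for $g$ (vertical, above $\mathrm{id}_I$) against a fibration $h : X \twoheadrightarrow Y$ in $\cat{E}$, the square lies entirely above $\cat{E}_I$ since the bottom of the square is $\mathrm{id}_I = p(g)$ composed appropriately; more precisely, reindex $h$ along the map $I \to p(Y)$ to reduce to the case where $h$ is a vertical fibration over $I$ — this uses that cartesian morphisms are fibrations (condition sec-4-1.2) and that the induced vertical map into the pullback is a fibration in the fiber, i.e. the characterization of Reedy fibrations together with Definition sec-4-2-4-1. The lifting then factors through the lift against the vertical fibration, which exists by hypothesis on $g$. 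This is the same pattern as in the proof of Lemma sec-4-2-4-5 and Proposition sec-4-3-1, where "by reindexing we can assume the base map is the identity." The main obstacle — really the only place care is needed — is making this reindexing reduction precise: one must check that the pullback of the Reedy fibration $h$ to a vertical fibration over $I$ is again a fibration in $\cat{E}_I$, which is exactly what Definition sec-4-2-4-1 (or condition sec-4-1.3) supplies, and that a lift against the reindexed problem transports back to a lift against the original, which follows from the universal property of the cartesian lift.

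Finally, I would note that the factorization just produced is natural enough for later use: the fibration part $C \twoheadrightarrow B$ is a fibration in $\cat{E}$ as well, being a vertical morphism whose underlying fiber morphism is a fibration and which therefore (trivially, since $p(C\twoheadrightarrow B) = \mathrm{id}_I$ is a fibration and the induced map to $(\mathrm{id}_I)^*B \cong B$ is the fibration itself) is a Reedy fibration. So the statement $A \overset{\sim}{\rightarrowtail} C \twoheadrightarrow B$ in $\cat{E}_I$ gives simultaneously a factorization in the total category into a Reedy acyclic cofibration followed by a Reedy fibration, both of which happen to be vertical. I expect the write-up to be two or three sentences: invoke the fiber's factorization axiom, then cite Lemma sec-4-2-4-5 (or the argument behind it) for the lifting property of the cofibration against all fibrations.
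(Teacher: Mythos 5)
There is a genuine gap: your argument is circular relative to the paper's development, because it starts from a hypothesis that is not available at this point. The standing assumption in this subsection is only Definition \ref{org204acfc}: the total category $\cat{E}$ and the base $\cat{B}$ are type-theoretic fibration categories, $p$ is a type-theoretic functor, and the two conditions on cartesian morphisms plus condition \ref{org2f1c729} hold. It is \emph{not} assumed that the fiber $\cat{E}_{I}$ is a type-theoretic fibration category: Definition \ref{org204acfc} says nothing about fibers or reindexing functors, and Theorem \ref{org4e7cd71} goes in the opposite direction (it takes fiberwise structures as \emph{input} and builds the total one). The fact that each fiber inherits a type-theoretic fibration structure is Theorem \ref{org5f7bf15}, which is proved later and whose proof depends, via Proposition \ref{orgaf904b5} and Proposition \ref{orgc8a114a}, on the very lemma you are proving. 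So the opening move ``apply the factorization axiom in the type-theoretic fibration category $\cat{E}_{I}$'' begs the question. Relatedly, ``acyclic cofibration above $I$'' and ``fibration above $I$'' in the statement mean morphisms of the classes of $\cat{E}$ that happen to be vertical, not members of some separately given fiberwise classes, and the ``Reedy fibrations'' you propose to lift against belong to the construction of \Sect \ref{sec:orgd50ce84}; in this subsection the fibrations of $\cat{E}$ are abstract, so that reduction is not meaningful here without further argument.

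The paper's proof runs the other way around, using only the ambient structure. Factor $f$ in the total category $\cat{E}$ (a type-theoretic fibration category by hypothesis) as $A \overset{\sim}{\rightarrowtail} C \twoheadrightarrow B$; since $p$ is a type-theoretic functor, this lies over a factorization $I \overset{\sim}{\rightarrowtail} J \twoheadrightarrow I$ of the identity $1_{I}$, say $s$ followed by $t$. The cartesian morphism $s^{*}C \to C$ lies above the acyclic cofibration $s$, hence is an acyclic cofibration by condition \ref{orgee4a1a2} of Definition \ref{org204acfc}, and then Lemma \ref{org5a8693c} shows that the induced vertical morphism $A \to s^{*}C$ is an acyclic cofibration in $\cat{E}$. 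The remaining vertical morphism $s^{*}C \twoheadrightarrow s^{*}t^{*}B \cong B$ is a fibration by the fibred conditions (condition \ref{org2f1c729} applied to $C \twoheadrightarrow B$ over $t$, together with stability of fibrations under the pullback squares formed by cartesian lifts). If you want to argue by factoring inside the fiber, you would first have to prove, independently of this lemma, that the fibers possess such factorizations; the paper deliberately avoids that and extracts the vertical factorization from the total one instead.
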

\begin{proof}
\label{sec-4-3-4}
First we have the following factorization in \(\cat{E}\)
\[
\begin{tikzcd}
 [row sep=3ex] A \arrow[rr,"f"] \arrow[dr,rightarrowtail,"\sim"] &  & B \\
  & C \arrow[ur,twoheadrightarrow] &  \\
 I \arrow[rr,equal] \arrow[dr,rightarrowtail,"\sim","s"'] &  & I \\
  & J. \arrow[ur,twoheadrightarrow,"t"'] &  \end{tikzcd}
\]
By Lemma \ref{org5a8693c},
the induced morphism \(A \to s^{*}C\) is an acyclic cofibration.
Therefore we have the following factorization in \(\cat{E}_{I}\)
\[
\begin{tikzcd}
 A \arrow[r,rightarrowtail,"\sim"] \arrow[rrr,bend right=15,"f"'] & s^{*}C \arrow[r,twoheadrightarrow] & s^{*}t^{*}B \arrow[r,"\cong"] & B. \end{tikzcd}
\]
\end{proof}
\begin{proposition}
\label{sec-4-3-5}
\label{orgaf904b5}
Let \(f : A \to B\) be a morphism in \(\cat{E}\) and
\[
\begin{tikzcd}
 pA \arrow[r,rightarrowtail,"\sim","s"'] & K \arrow[r,twoheadrightarrow,"t"'] & pB \end{tikzcd}
\]
be a factorization of \(pf\).
Then there exists a factorization
\[
\begin{tikzcd}
 A \arrow[r,rightarrowtail,"\sim","g"'] & C \arrow[r,twoheadrightarrow,"h"] & B \end{tikzcd}
\]
above \((s, t)\).
\end{proposition}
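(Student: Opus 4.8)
The plan is to transport the problem into the single fibre $\cat{E}_{K}$ and factor there. Since $t : K \twoheadrightarrow pB$ is a fibration, I would first take a cartesian morphism $\kappa : t^{*}B \to B$ over $t$, which by condition~\ref{org6d6527a} of Definition~\ref{org204acfc} is a fibration in $\cat{E}$. As $pf = t \circ s$, the universal property of $\kappa$ produces a morphism $f' : A \to t^{*}B$ over $s$ with $\kappa \circ f' = f$. Dually I would put a copy of $A$ over $K$: by Lemma~\ref{org8c01c79} the acyclic cofibration $s$ has a retraction $r : K \to pA$ with $r \circ s = 1_{pA}$, so $s^{*}r^{*}A \cong (r \circ s)^{*}A \cong A$, and the cartesian morphism $s^{*}r^{*}A \to r^{*}A$ over $s$ yields a morphism $\iota : A \to r^{*}A$ over $s$; being cartesian over an acyclic cofibration, $\iota$ is an acyclic cofibration in $\cat{E}$ by condition~\ref{orgee4a1a2} of Definition~\ref{org204acfc}.

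Next I would extend $f'$ along $\iota$ to a morphism sitting over $1_{K}$. Applying Proposition~\ref{org4fa27cf} to the lifting problem in $\cat{E} \to \cat{B}$ whose left edge is $\iota : A \overset{\sim}{\rightarrowtail} r^{*}A$, whose right edge is the fibration $t^{*}B \twoheadrightarrow 1$, whose top edge is $f'$, and whose chosen filler downstairs is $1_{K} : K \to K$ (admissible since $1_{K} \circ s = s$), I obtain a morphism $\hat f : r^{*}A \to t^{*}B$ over $1_{K}$ with $\hat f \circ \iota = f'$. Thus $\hat f$ is a morphism of $\cat{E}_{K}$, and Lemma~\ref{org4b23fdf} factors it there as an acyclic cofibration $a : r^{*}A \overset{\sim}{\rightarrowtail} C$ over $K$ followed by a fibration $b : C \twoheadrightarrow t^{*}B$ over $K$. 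Setting $g := a \circ \iota$ and $h := \kappa \circ b$, the morphism $g$ is a composite of acyclic cofibrations in $\cat{E}$, hence an acyclic cofibration, with $pg = 1_{K} \circ s = s$; the morphism $h$ is a composite of fibrations in $\cat{E}$, hence a fibration, with $ph = t \circ 1_{K} = t$; and $h \circ g = \kappa\,b\,a\,\iota = \kappa\,\hat f\,\iota = \kappa\,f' = f$, giving the required factorization over $(s,t)$.

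The only real subtlety is keeping the factorization over the prescribed pair $(s,t)$ rather than over some other factorization of $pf$: a bare factorization of $f$ inside $\cat{E}$ need not project to $(s,t)$. The device is to push $A$ forward into $\cat{E}_{K}$ along the cartesian --- hence acyclic cofibration --- morphism $\iota$, then use the version of the lifting property in which the filler downstairs is pinned to the identity (Proposition~\ref{org4fa27cf}) so that the extension $\hat f$ of $f'$ stays vertical over $K$, and finally apply the fibrewise factorization of Lemma~\ref{org4b23fdf}. The remaining verifications are routine manipulations with cartesian morphisms and the pseudofunctoriality of reindexing.
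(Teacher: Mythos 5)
Your proof is correct and follows essentially the same route as the paper's: you construct the cartesian pushforward of \(A\) along \(s\) via the retraction from Lemma \ref{org8c01c79} (this is exactly Lemma \ref{org74ddcdc}), extend over \(K\) using Proposition \ref{org4fa27cf} against the fibration \(t^{*}B \twoheadrightarrow 1\), and then factor vertically with Lemma \ref{org4b23fdf}. Your more explicit bookkeeping with \(\kappa\), \(f'\) and the identity filler \(1_{K}\) is just an unpacked version of the paper's argument.
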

\begin{proof}
\label{sec-4-3-6}
As in Lemma \ref{org74ddcdc},
we have a cartesian morphism \(\eta : A \to s_{!}A\) above \(s\).
By Proposition \ref{org4fa27cf},
applied for the fibration \(t^{*}B \twoheadrightarrow 1_{K}\),
we have a factorization of \(f\)
\[
\begin{tikzcd}
 A \arrow[r,rightarrowtail,"\sim","\eta"'] & s_{!}A \arrow[r,"k"] & t^{*}B \arrow[r,twoheadrightarrow,"\varepsilon"'] & B \end{tikzcd}
\]
where \(\eta\), \(k\) and \(\varepsilon\) are above
\(s\), \(K\) and \(t\) respectively.
Take a vertical factorization of \(k\)
using Lemma \ref{org4b23fdf}.
\end{proof}
\subsection{Change of Base}
\label{sec:org95dc807}
\begin{proposition}
\label{sec-4-4-1}
\label{orgc8a114a}
Let \(p : \cat{E} \to \cat{B}\)
be a fibred type-theoretic fibration category,
\(\cat{A}\) a type-theoretic fibration category and
\(F : \cat{A} \to \cat{B}\) a functor
preserving fibrations, pullbacks of fibrations and
acyclic cofibrations.
Then the change of base \(F^{*}\cat{E} \to \cat{A}\)
of \(p\) along \(F\) is
a fibred type-theoretic fibration category where
the fibrations are the levelwise fibrations:
a morphism \((s, f) : (I, A) \to (J, B)\) in \(F^{*}\cat{E}\)
is a fibration if and only if \(s\) and \(f\) are fibrations.
\end{proposition}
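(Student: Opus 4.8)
The plan is to equip $F^{*}\cat{E}$ with the levelwise fibrations and verify directly the four conditions of Definition~\ref{org1cf19e2} for $F^{*}\cat{E}$, and then the four conditions of Definition~\ref{org204acfc} for the projection $q\colon F^{*}\cat{E}\to\cat{A}$. Recall first that the change of base of a fibred category along an arbitrary functor is again a fibred category, and that a morphism $(s,f)\colon(I,A)\to(J,B)$ is cartesian exactly when $f$ is cartesian over $F(s)$ in $\cat{E}$. A terminal object of $F^{*}\cat{E}$ is $(1,T)$, where $T=u^{*}1_{\cat{E}}$ is the cartesian lift of $1_{\cat{E}}$ along the unique map $u\colon F(1)\to 1$; one checks by cartesianness that this is terminal, and that the (unique) morphism from any $(I,A)$ to it has $\cat{E}$-component the induced morphism $A\to T$, which lies above the fibration $F(I)\to F(1)$ and is a fibration in $\cat{E}$ by condition~\ref{org2f1c729} of Definition~\ref{org204acfc}. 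Since isomorphisms of $F^{*}\cat{E}$ are levelwise isomorphisms, the first condition of Definition~\ref{org1cf19e2} holds; note this does not require $F$ to preserve the terminal object.

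For closure under pullback, given a levelwise fibration $(s,f)\colon(I,A)\twoheadrightarrow(J,B)$ and any $(s',g)\colon(J',B')\to(J,B)$, I would form the pullbacks $I\times_{J}J'$ in $\cat{A}$ and $A\times_{B}B'$ in $\cat{E}$; since $p$ and $F$ both preserve pullbacks of fibrations, $p(A\times_{B}B')=F(I\times_{J}J')$, so $(I\times_{J}J',A\times_{B}B')$ is a well-defined object of $F^{*}\cat{E}$, and it is the required pullback with levelwise-fibration projection. For the factorization condition I would factor the base component $s$ in $\cat{A}$ as an acyclic cofibration followed by a fibration, apply $F$ (which preserves both) to obtain such a factorization of $p(f)=F(s)$ in $\cat{B}$, and invoke Proposition~\ref{orgaf904b5} to lift it to a factorization $A\overset{\sim}{\rightarrowtail}C\twoheadrightarrow B$ in $\cat{E}$ lying above it. This yields a factorization $(I,A)\to(K,C)\to(J,B)$ in $F^{*}\cat{E}$ whose second half is a levelwise fibration; that the first half has the left lifting property against all levelwise fibrations, hence is an acyclic cofibration in $F^{*}\cat{E}$, follows from Proposition~\ref{org4fa27cf}: a lifting problem in $F^{*}\cat{E}$ projects to one in $\cat{A}$ solvable by the acyclic cofibration there, and Proposition~\ref{org4fa27cf} lifts the chosen filler above it.

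The delicate condition is the existence of dependent products. Given levelwise fibrations $(s,f)\colon(I,A)\twoheadrightarrow(J,B)$ and $(t,g)\colon(J,B)\twoheadrightarrow(K,C)$, the base component of the dependent product is forced to be $\Pi_{t}s\colon\Pi_{t}I\twoheadrightarrow K$. In $\cat{E}$ we have $\Pi_{g}f\colon\Pi_{g}A\twoheadrightarrow C$, lying over $\Pi_{Ft}Fs\colon\Pi_{Ft}FI\twoheadrightarrow FK$ because $p$ preserves dependent products; but $F$ need not, so $\Pi_{g}A$ lives over the wrong base object, and the remedy is to reindex it along the canonical comparison $\phi\colon F(\Pi_{t}I)\to\Pi_{Ft}FI$ over $FK$ — which exists because $F$ preserves the pullback of the fibration $t$, so that $F$ of the counit $t^{*}\Pi_{t}I\to I$ transposes to $\phi$ — and take $(\Pi_{t}I,\phi^{*}\Pi_{g}A)$ as the dependent product. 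Its projection to $(K,C)$ is a levelwise fibration: its $\cat{E}$-component is the cartesian $\phi^{*}\Pi_{g}A\to\Pi_{g}A$ followed by $\Pi_{g}f$, it lies above the fibration $F(\Pi_{t}s)=(\Pi_{Ft}Fs)\phi$, and refactoring it through the cartesian lift of $C$ along $F(\Pi_{t}s)$ exhibits it as a fibration using conditions~\ref{org6d6527a} and~\ref{org2f1c729} of Definition~\ref{org204acfc} together with the stability of fibrations under reindexing (the reindexing square being a pullback in $\cat{E}$). Verifying the universal property is then a diagram chase combining the transposition bijections in $\cat{A}$ and $\cat{E}$, cartesianness, the Beck--Chevalley condition, and the defining property of $\phi$; this is the main obstacle, and it runs entirely parallel to the proof of Lemma~\ref{org1e0b166}.

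It remains to check conditions~\ref{org6d6527a}--\ref{orgee4a1a2} of Definition~\ref{org204acfc} for $q$, and each reduces to the corresponding condition for $\cat{E}\to\cat{B}$ plus the hypotheses on $F$: a cartesian morphism of $F^{*}\cat{E}$ over a fibration (resp.\ acyclic cofibration) of $\cat{A}$ has $\cat{E}$-component cartesian over a fibration (resp.\ acyclic cofibration) of $\cat{B}$, hence a fibration (resp.\ acyclic cofibration) in $\cat{E}$, and in the acyclic-cofibration case one applies Proposition~\ref{org4fa27cf} once more so that the lift against levelwise fibrations respects the base; condition~\ref{org2f1c729} for $q$ is condition~\ref{org2f1c729} for $\cat{E}\to\cat{B}$ applied to $F$ of the relevant base factorization. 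That $q$ is a type-theoretic functor is visible from the constructions above, preservation of acyclic cofibrations being obtained by lifting lifting-problems in $\cat{A}$ through the fibrewise terminal objects. Alternatively, once the fibrewise description of a fibred type-theoretic fibration category is available, one may instead verify the hypotheses of Theorem~\ref{org4e7cd71} for $q$ — whose fibres are the $\cat{E}_{F(I)}$ and whose reindexing functors are the $F(s)^{*}$ — and check that the Reedy fibrations so produced are precisely the levelwise fibrations, again by conditions~\ref{org6d6527a} and~\ref{org2f1c729} of Definition~\ref{org204acfc}.
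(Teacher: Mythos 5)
Your proposal is correct and follows essentially the same route as the paper's proof: the terminal object via the cartesian lift of \(1_{\cat{E}}\) along \(F(1_{\cat{A}})\to 1_{\cat{B}}\), levelwise pullbacks, factorization via Proposition \ref{orgaf904b5} with lifting supplied by Proposition \ref{org4fa27cf}, and the dependent product \((\Pi_{t}I,\,\varepsilon'^{*}\Pi_{g}A)\) obtained by reindexing along the canonical comparison \(F(\Pi_{t}I)\to\Pi_{Ft}FI\), with the same fibration-factorization argument for its projection. The remaining verifications you defer (the universal property of the dependent product and the fibredness conditions for \(F^{*}\cat{E}\to\cat{A}\)) are exactly the ones the paper also leaves to the reader.
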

\begin{proof}
\label{sec-4-4-2}
We first show that the total category \(F^{*}\cat{E}\)
is a type-theoretic fibration category.
The object \((1_{\cat{A}}, !_{F(1_{\cat{A}})}^{*}1_{\cat{E}})\)
is a terminal object in \(F^{*}\cat{E}\)
where \(!_{F(1_{\cat{A}})} : F(1_{\cat{A}}) \to 1_{\cat{B}}\)
is the unique morphism to the terminal object in \(\cat{B}\).
The unique morphism \((I, X) \to (1_{\cat{A}}, !_{F(1_{\cat{A}})}^{*}1_{\cat{E}})\)
is a levelwise fibration by the condition \ref{org2f1c729} of Definition \ref{orgec620d6}.
A pullback of a fibration is calculated
by levelwise pullbacks.
Thus \(F^{*}\cat{E}\) satisfies
the conditions \ref{org57e04f0} and \ref{orgffa6b99} of Definition \ref{org1cf19e2}

Proposition \ref{orgaf904b5} and
preservation of acyclic cofibrations and fibrations
imply that every morphism in \(F^{*}\cat{E}\)
factors as a levelwise acyclic cofibration
followed by a levelwise fibration.
Proposition \ref{org4fa27cf} shows that
any levelwise acyclic cofibration has
the left lifting property with respect to
all the levelwise fibrations.
Therefore the condition \ref{orgd3d37e2} holds.

To construct a dependent product,
let \(s : I \twoheadrightarrow J\) and \(t : J \twoheadrightarrow K\)
be fibrations in \(\cat{A}\) and
\(f : A \twoheadrightarrow B\) and \(g : B \twoheadrightarrow C\)
be fibrations above \(Fs\) and \(Ft\) respectively.
Let \(\varepsilon_{0} : t^{*}\Pi_{t}I \to I\)
be the counit of the adjunction \(t^{*} \dashv \Pi_{t}\).
Then we have a morphism
\(\varepsilon' : F(\Pi_{t}I) \to \Pi_{Ft}FI\) over \(FK\)
which corresponds to
\((Ft)^{*}F(\Pi_{t}I) \cong F(t^{*}\Pi_{t}I) \overset{F\varepsilon_{0}}{\to} FI\).
The composition \(\varepsilon'^{*}\Pi_{g}A \to \Pi_{g}A \to C\)
is a fibration because it factors in \(\cat{E} \to \cat{B}\) as
\[
\begin{tikzcd}
 [row sep=3ex] \varepsilon'^{*} \Pi_{g}A \arrow[r,"\bar{\varepsilon'}"] \arrow[d,twoheadrightarrow] & \Pi_{g}A \arrow[dr,twoheadrightarrow] \arrow[d,twoheadrightarrow] &  \\
 (F(\Pi_{t}s))^{*}C \arrow[r] \arrow[rr,twoheadrightarrow,bend right=15] & (\Pi_{Ft}(Fs))^{*}C \arrow[r,twoheadrightarrow] & C \\
 F(\Pi_{t}I) \arrow[r,"\varepsilon'"] \arrow[rr,twoheadrightarrow,bend right=15,"F(\Pi_{t}{s})"'] & \Pi_{Ft}FI \arrow[r,twoheadrightarrow,"\Pi_{Ft}(Fs)"] & FK \end{tikzcd}
\]
where all arrows in the middle row and \(\bar{\varepsilon'}\)
are cartesian morphisms
so that the upper left square is a pullback.
It is easy to check that
the fibration \((\Pi_{t}I, \varepsilon'^{*}\Pi_{g}A) \twoheadrightarrow (K, C)\)
is a dependent product of \((s, f)\) along \((t, g)\).
Hence the total category \(F^{*}\cat{E}\)
is a type-theoretic fibration category.

To show that \(F^{*}\cat{E} \to \cat{A}\)
is a fibred type-theoretic fibration category
is easy and left to the reader.
\end{proof}
\begin{example}
\label{sec-4-4-3}
\label{orgf4dcb7d}
Let \(F : \cat{A} \to \cat{B}\)
be a functor between type-theoretic fibration categories
preserving fibrations, pullbacks of fibrations and acyclic cofibrations.
The change of base of \(\cod : \fibcat{\arcat{\cat{B}}} \to \cat{B}\)
along \(F\) is called the \emph{gluing construction for \(F\)}.
Its total category, written as \(\fibcat{\cat{B} \downarrow F}\),
is same as Shulman's construction \cite[Section 13]{shulman2015inverse}.
It is the full subcategory of the comma category \((\cat{B} \downarrow F)\)
where the objects are the triples of
objects \(B \in \cat{B}\) and \(A \in \cat{A}\) and
a fibration \(u : B \twoheadrightarrow FA\).
\end{example}
\begin{example}
\label{sec-4-4-4}
\label{org38edd10}
Let \(p : \cat{E} \to \cat{B}\)
be a fibred type-theoretic fibration category.
The change of base of \(p\) along
the functor \(I \mapsto I \times I\)
is called the \emph{relational model for \(p\)},
and we write the total category as \(\Rel(p)\).
Its objects are the pairs of
objects \(I \in \cat{B}\) and \(A \in \cat{E}_{I \times I}\).
We can think of \(\Rel(p)\)
as the category of binary relations or binary type families.
\end{example}

\label{sec-4-4-5}
As a corollary of change of base,
we have the converse of Theorem \ref{org4e7cd71}.
Since structure of type-theoretic fibration category is determined by its fibrations,
the constructions of Theorem \ref{org4e7cd71} and
Theorem \ref{org5f7bf15} below are mutually inverse.

\begin{theorem}
\label{sec-4-4-5-1}
\label{org5f7bf15}
Let \(p : \cat{E} \to \cat{B}\) be
a fibred type-theoretic fibration category.
Then each fiber \(\cat{E}_{I}\) is a type-theoretic fibration category,
restricting the structure of \(\cat{E}\),
and all the hypotheses of Theorem \ref{org4e7cd71} hold.
\end{theorem}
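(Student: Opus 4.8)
The plan is to verify the four numbered hypotheses of Theorem~\ref{org4e7cd71} one at a time, using the axioms of a fibred type-theoretic fibration category (Definition~\ref{org204acfc}) together with the basic properties established in Section~\ref{sec:org0d18755}. Throughout, the key observation is that since $p : \cat{E} \to \cat{B}$ is a type-theoretic functor, a morphism that is a fibration in $\cat{E}$ and lies above a fibration in $\cat{B}$ should be a Reedy fibration in the sense of Definition~\ref{orga37332a}, and conversely; this dictionary is what lets fiberwise structure be recovered from total structure.

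First I would show each fiber $\cat{E}_I$ is a type-theoretic fibration category with fibrations the vertical fibrations (those that are fibrations in $\cat{E}$ lying above $1_I$). The terminal object is $!_I^* 1_{\cat{E}}$; vertical fibrations are closed under pullback because pullbacks of fibrations in $\cat{E}$ along vertical maps stay vertical and the functor $p$ preserves such pullbacks. The factorization axiom is exactly Lemma~\ref{org4b23fdf}. For dependent products along a vertical fibration $f : A \twoheadrightarrow B$ in $\cat{E}_I$, one takes the dependent product $\Pi_f A$ in $\cat{E}$ (which exists by the total structure, $f$ being a fibration above $1_I$), notes it lies above $\Pi_{1_I} 1_I \cong I$, and checks its universal property restricts correctly; the acyclic cofibrations in $\cat{E}_I$ are shown to be the vertical acyclic cofibrations via Lemma~\ref{org5a8693c} and Proposition~\ref{org4fa27cf}.

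Next I would check hypothesis~\ref{org7c2e30a}: each reindexing functor $s^* : \cat{E}_J \to \cat{E}_I$ is a type-theoretic functor. Preservation of terminal objects and of pullbacks of vertical fibrations is formal from cartesianness; preservation of vertical fibrations uses condition~\ref{org2f1c729} of Definition~\ref{org204acfc} (reindexing a Reedy fibration along any map stays Reedy, hence vertical reindexed to vertical), and preservation of acyclic cofibrations follows from condition~\ref{orgee4a1a2} together with Lemma~\ref{org5a8693c}, arguing as in Example~\ref{orgaabe3e5}. Preservation of dependent products is a Beck--Chevalley-style argument: $s^*\Pi_f A$ and $\Pi_{s^*f} s^* A$ have the same universal property, proven by the same chain of natural bijections as in the proof of Lemma~\ref{orgf15694c}, now carried out inside $\cat{E}$ and projected down. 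Hypothesis~\ref{orgd4716d9} --- that every fibration in $\cat{E}_J$ is weakly $s^*$-cartesian for an acyclic cofibration $s : I \overset{\sim}{\rightarrowtail} J$ --- is obtained by combining Lemma~\ref{org8c01c79} (so $s$ has a retraction $t$, giving $s^* \cong$ restriction along a split mono) with a lifting argument: the required factorization comes from the left lifting property of cartesian morphisms above acyclic cofibrations against vertical fibrations, which is Lemma~\ref{org5d31cb4}, or more directly from Proposition~\ref{org4fa27cf}.

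The main obstacle I expect is hypothesis~\ref{org45c198d}: producing the right adjoint $s_*$ to $s^* : \cat{E}_J \to \cat{E}_I$ for a fibration $s : I \twoheadrightarrow J$, preserving fibrations and satisfying Beck--Chevalley. Here the idea is that for a vertical fibration $A \twoheadrightarrow I$, one views $A \twoheadrightarrow I \twoheadrightarrow J$ as a composite and takes a dependent product in $\cat{E}$ along the appropriate fibration to define $s_* A$ as an object over $J$; the total category's dependent products (Lemma~\ref{orgef4cf25}) supply this, but one must check the output lands in the right fiber and that the adjunction $s^* \dashv s_*$ holds fiberwise --- essentially by unwinding the universal property of the total dependent product and restricting attention to morphisms lying above identities. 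Preservation of fibrations by $s_*$ should follow because the dependent product of a fibration along a fibration in $\cat{E}$ is a fibration. The Beck--Chevalley condition for the $s_*$'s is then inherited from the Beck--Chevalley condition for dependent products in the total category $\cat{E}$, Lemma~\ref{orgf15694c}, applied to the evident pullback square of fibrations sitting above the given one in $\cat{B}$. I would organize this last part carefully, since keeping track of which objects live in which fiber is where the bookkeeping is heaviest.
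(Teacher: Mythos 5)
Your overall route is the paper's: the fibers carry the restricted structure, the reindexing functors are shown to be type-theoretic, hypothesis \ref{orgd4716d9} comes from the fact that cartesian morphisms over acyclic cofibrations are acyclic cofibrations, and, for the key hypothesis \ref{org45c198d}, $s_{*}A$ is defined as the dependent product of $A \twoheadrightarrow 1_{I}$ along the cartesian lift $1_{I} \to 1_{J}$ of $s$ (a fibration by condition \ref{org6d6527a}), with Beck--Chevalley inherited from Lemma \ref{orgf15694c}. The only structural difference is that the paper obtains the fiber structure in one stroke by applying Proposition \ref{orgc8a114a} (change of base along the inclusion of the point $I$), whereas you re-verify the axioms directly; the content is the same.

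However, several of your cited justifications do not do the work you assign them. For preservation of vertical fibrations by $s^{*}$ you invoke condition \ref{org2f1c729}; that condition only lets you peel off a cartesian factor when the remaining base morphism is a \emph{fibration}, so it says nothing about reindexing along an arbitrary $s : I \to J$, which hypothesis \ref{org7c2e30a} requires. The correct argument (the paper's) is that the reindexing square is a pullback square in $\cat{E}$ and fibrations in $\cat{E}$ are pullback-stable; likewise preservation of acyclic cofibrations is Lemma \ref{org67ad797} applied over $1_{J}$ (as in Example \ref{orgaabe3e5}), not condition \ref{orgee4a1a2} plus Lemma \ref{org5a8693c} --- that cancellation argument would need the cartesian lifts of $s$ to be acyclic cofibrations, which fails for general $s$. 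Also, Lemma \ref{org5d31cb4} and Lemma \ref{orgef4cf25} are proved \emph{under the hypotheses of Theorem \ref{org4e7cd71}}, i.e., under exactly the conditions you are trying to establish, so citing them here is circular. For \ref{orgd4716d9} argue directly: by condition \ref{orgee4a1a2} the cartesian lift of the acyclic cofibration $s$ is an acyclic cofibration in $\cat{E}$, hence lifts against the given vertical fibration, and cartesianness forces the lift to reindex correctly (your fallback, Proposition \ref{org4fa27cf}, also works). For the dependent products in $\cat{E}$ needed to define $s_{*}$, simply use that $\cat{E}$ is itself a type-theoretic fibration category by Definition \ref{org204acfc}; no appeal to Lemma \ref{orgef4cf25} is needed.
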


\begin{proof}
\label{sec-4-4-5-2}
By Proposition \ref{orgc8a114a},
each fiber \(\cat{E}_{I}\) is a type-theoretic fibration category.
We check the other conditions of Theorem \ref{org4e7cd71}.

Observe that the morphism part of the reindexing functor along a morphism \(s\)
is given by the pullback squares
\[
\begin{tikzcd}
 s^{*}A \arrow[r] \arrow[d,"s^{*}f"'] & A \arrow[d,"f"] \\
 s^{*}B \arrow[r] & B \end{tikzcd}
\]
in \(\cat{E}\).
Thus the reindexing functor preserves
fibrations and pullbacks of fibrations.
Using Lemma \ref{org67ad797}
it also preserves acyclic cofibrations.
It also preserves terminal objects
by the construction of terminal objects in the proof of Proposition \ref{orgc8a114a}.
Using Lemma \ref{orgf15694c}
it preserves dependent products.
Thus the condition \ref{org7c2e30a} holds.
Since a cartesian morphism above an acyclic cofibration
is an acyclic cofibration,
the condition \ref{orgd4716d9} holds.

To show the condition \ref{org45c198d},
let \(s : I \twoheadrightarrow J\) be a fibration in \(\cat{B}\)
and \(A\) an object of \(\cat{E}_{I}\).
Write \(1_{I}\) for the terminal object of \(\cat{E}_{I}\).
The unique morphism \(f : 1_{I} \to 1_{J}\) above \(u\)
is a cartesian morphism and thus a fibration.
Define \(s_{*}A = \Pi_{f}A\).
Since \(p : \cat{E} \to \cat{B}\) preserves
dependent products, \(s_{*}A\) is above \(J\).
Therefore \(s_{*}\) determines a functor \(\cat{E}_{I} \to \cat{E}_{J}\)
which is a right adjoint to \(s^{*}\)
and satisfies the Beck-Chevalley condition
by Lemma \ref{orgf15694c}.
\end{proof}

\subsection{Internal Language for a Fibred Type-Theoretic Fibration Category}
\label{sec:org8daeffb}
We define the \emph{internal language} \(\lang{p}\)
for a fibred type-theoretic fibration category \(p : \cat{E} \to \cat{B}\).
It is a type theory with two sorts \emph{kind} and \emph{type},
where types depend on some kinds.
The theory of kinds is the internal language of \(\cat{B}\)
which is a Martin-L\"{o}f type theory
written in the manner
\(\gamma : \Gamma \vdash \Delta(\gamma) \ \kind\)
for a kind judgment and
\(\gamma : \Gamma \vdash \delta(\gamma) : \Delta(\gamma)\)
for a term judgment.
The theory of types over a kind \(\Gamma\)
is the internal language of \(\cat{E}_{\Gamma}\),
written in the manner
\(\gamma : \Gamma \mid a : A(\gamma) \vdash B(\gamma; a) \ \type\)
for a type judgment and
\(\gamma : \Gamma \mid a : A(\gamma) \vdash b(\gamma; a) : B(\gamma; a)\)
for a term judgment.
Corresponding to reindexings,
there are rules for \emph{substitution}:
\begin{prooftree}
\AxiomC{\(\Gamma \vdash f : \Delta\)}
\AxiomC{\(\delta : \Delta \mid a : A(\delta) \vdash B(\delta; a) \ \type\)}
\BinaryInfC{\(\Gamma \mid a : A(f) \vdash B(f; a) \ \type\)}
\end{prooftree}
and
\begin{prooftree}
\AxiomC{\(\Gamma \vdash f : \Delta\)}
\AxiomC{\(\delta : \Delta \mid a : A(\delta) \vdash b(\delta; a) : B(\delta; a)\)}
\BinaryInfC{\(\Gamma \mid a : A(f) \vdash b(f; a) : B(f; a)\).}
\end{prooftree}
Corresponding to the condition \ref{org45c198d} of Theorem \ref{org4e7cd71},
there is a \emph{dependent product type over a kind}:
\begin{prooftree}
\AxiomC{\(\Gamma \vdash \Delta \ \kind\)}
\AxiomC{\(\Gamma, \delta : \Delta \mid a : A(\delta) \vdash B(\delta; a) \ \type\)}
\BinaryInfC{\(\Gamma \mid a : \Pi_{\delta : \Delta}A(\delta) \vdash \Pi_{\delta : \Delta}B(\delta; a\delta) \ \type\)}
\end{prooftree}
with the obvious introduction and elimination rules.
The condition \ref{orgd4716d9} of Theorem \ref{org4e7cd71}
corresponds to the \emph{path induction on an identity kind with respect to type families}:
\begin{prooftree}
\AxiomC{\(\gamma : \Gamma, \gamma' : \Gamma, \delta : \gamma = \gamma' \mid a : A(\delta) \vdash B(\delta; a) \ \type\)}
\noLine
\UnaryInfC{\(\gamma : \Gamma \mid a : A(\refl_{\gamma}) \vdash b(\gamma; a) : B(\refl_{\gamma}; a)\)}
\UnaryInfC{\(\gamma : \Gamma, \gamma' : \Gamma, \delta : \gamma = \gamma' \mid a : A(\delta) \vdash \Ind_{=_{\Gamma}}^{B, b}(\delta; a) : B(\delta; a)\)}
\end{prooftree}
with the computational rule
\(\Ind_{=_{\Gamma}}^{B, b}(\refl_{\gamma}; a) \equiv b(\gamma; a)\).

Theorem \ref{org4e7cd71} means that
the total category is a model of Martin-L\"{o}f type theory
where types are pairs of \(() \vdash \Gamma \ \kind\)
and \(\gamma : \Gamma \mid () \vdash A(\gamma) \ \type\).
The construction of factorization described in the proof of Lemma \ref{org1b5b6ac}
implies that the identity type of
\((() \vdash \Gamma \ \kind, \gamma : \Gamma \mid () \vdash a : A(\gamma) \ \type)\)
is implemented as
\((\gamma : \Gamma, \gamma' : \Gamma \vdash \gamma = \gamma \ \kind,
\gamma : \Gamma, \gamma' : \Gamma, \delta : \gamma = \gamma
\mid a : A(\gamma), a' : A(\gamma') \vdash \delta_{*}a = a' \ \type)\),
where \(\delta_{*} : A(\gamma) \to A(\gamma')\)
is the transport along the path \(\delta\).
The type \(\delta_{*}a = a'\) is a type of \emph{paths from \(a\) to \(a'\) over \(\delta\)}
and written as \(a =_{\delta} a'\).
The construction of dependent product described in the proof of Lemma \ref{org1e0b166}
implies that the dependent product of a type family
\((\gamma : \Gamma \vdash \Delta(\gamma) \ \kind,
\gamma : \Gamma, \delta : \Delta(\gamma) \mid a : A(\gamma) \vdash B(\gamma, \delta; a))\)
over \((() \vdash \Gamma \ \kind, \gamma : \Gamma \mid () \vdash A(\gamma) \ \type)\)
is implemented as
\((() \vdash \Pi_{\gamma : \Gamma}\Delta(\gamma) \ \kind,
f : \Pi_{\gamma : \Gamma}\Delta(\gamma) \mid () \vdash
\Pi_{\gamma : \Gamma}\Pi_{a : A(\gamma)}B(\gamma, f\gamma; a) \ \type)\).

\label{sec-4-5-1}
As a consequence of the soundness of the interpretation of Martin-L\"{o}f type theory
in type-theoretic fibration categories,
we have an important property of logical predicates
so called the ``basic lemma.''
\begin{definition}
\label{sec-4-5-1-1}
For a Martin-L\"{o}f type theory \(M\),
write \(\cat{T}(M)\) for the syntactic category of \(M\).
For a type-theoretic fibration category \(\cat{C}\),
an \emph{interpretation of \(M\) in \(\cat{C}\)}
is a type-theoretic functor from \(\cat{T}(M)\) to \(\cat{C}\).
\end{definition}
\begin{definition}
\label{sec-4-5-1-2}
Let \(p : \cat{E} \to \cat{B}\) be a fibred type-theoretic fibration category
and \(F : \cat{T}(M) \to \cat{B}\)
an interpretation of a Martin-L\"{o}f type theory \(M\) in \(\cat{B}\).
A \emph{logical predicate on \(F\) with respect to \(p\)}
is an interpretation \(R : \cat{T}(M) \to \cat{E}\)
such that \(p \circ R = F\).
\end{definition}
\begin{corollary}[{Basic Lemma}]
\label{sec-4-5-1-3}
\label{orgb94bb63}
Let \(p : \cat{E} \to \cat{B}\) be a fibred type-theoretic fibration category
and \(R : \cat{T}(M) \to \cat{E}\) a logical predicate
on an interpretation \(F : \cat{T}(M) \to \cat{B}\)
of a Martin-L\"{o}f type theory \(M\) in \(\cat{B}\).
Then for any term \(a : A \vdash t(a) : B(a)\),
there exists a term \(a : FA \mid r : RA(a) \vdash \hat{t}(a; r) : RB(a, Ft(a); r)\)
in the internal language for \(p\),
where \(\hat{t}\) is the induced morphism
\(RA \to (Ft)^{*}RB\) from \(Rt : RA \to RB\).
\end{corollary}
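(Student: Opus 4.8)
The plan is to recognise that the Basic Lemma is just the soundness of the interpretation $R$ transcribed into the internal language $\lang{p}$, so that essentially all the work is already packaged in the construction of $\cat{T}(M)$, in the definition of interpretation, and in the set-up of $\lang{p}$. Concretely, I would start from the observation that in the syntactic category $\cat{T}(M)$ the type $a : A \vdash B(a) \ \type$ is a fibration $B \twoheadrightarrow A$ and the term $a : A \vdash t(a) : B(a)$ is a section $t : A \to B$ of it. One could instead run an induction on the derivation of $t$ (the usual ``fundamental lemma'' argument), but that induction is exactly what the construction of $\cat{T}(M)$ encapsulates, so I would not repeat it.

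Next I would apply the type-theoretic functor $R$. Since $R$ preserves fibrations, terminal objects, identities and composites, it sends the fibration $B \twoheadrightarrow A$ to a fibration $RB \twoheadrightarrow RA$ in $\cat{E}$ and the section $t$ to a section $Rt : RA \to RB$ of it. Using $p \circ R = F$, this morphism lies over $Ft : FA \to FB$, with $p(RA) = FA$, $p(RB) = FB$ and $p(RB \twoheadrightarrow RA) = FB \twoheadrightarrow FA$, and $Ft$ is the section of $FB \twoheadrightarrow FA$ that $Rt$ lies over.

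Then I would unpack this data into $\lang{p}$ via the dictionary of the previous subsection. The object $RA$ lies above $FA$, and because $RA \twoheadrightarrow 1_{\cat{E}}$ is a fibration in $\cat{E}$ and $p$ is a fibred type-theoretic fibration category, its induced vertical map $RA \to 1_{FA}$ is a fibration in $\cat{E}_{FA}$, so $RA$ is a type over the kind $FA$, written $a : FA \mid () \vdash RA(a) \ \type$. Factoring $RB \twoheadrightarrow RA$ through the reindexing $(FB \twoheadrightarrow FA)^{*}RA$ displays $FB$ as a kind over $FA$ and $RB$ as a type $a : FA, b : FB(a) \mid r : RA(a) \vdash RB(a, b; r) \ \type$. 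Finally I would factor $Rt$ as the vertical morphism $\hat{t} : RA \to (Ft)^{*}RB$ over $FA$ followed by the cartesian morphism $(Ft)^{*}RB \to RB$ over $Ft$; since $(Ft)^{*}RB$ is the reindexing along $Ft$ of the vertical fibration $RB \to (FB \twoheadrightarrow FA)^{*}RA$ and reindexing functors are type-theoretic, $(Ft)^{*}RB \twoheadrightarrow RA$ is a fibration in $\cat{E}_{FA}$ and $\hat{t}$ is a section of it, so the internal language of $\cat{E}_{FA}$ turns $\hat{t}$ into the required term $a : FA \mid r : RA(a) \vdash \hat{t}(a; r) : RB(a, Ft(a); r)$.

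The only step carrying content beyond bookkeeping is this last identification --- that the cartesian--vertical factorization of $Rt$ through $(Ft)^{*}RB$ matches the substitution rule of $\lang{p}$, so that $\bigl((Ft)^{*}RB\bigr)(a; r)$ really is $RB(a, Ft(a); r)$, i.e.\ reindexing along $Ft$ realizes the substitution $b := Ft(a)$. This is an instance of the soundness of substitution that is built into the hypothesis that $p$ is a fibred type-theoretic fibration category (reindexing functors are type-theoretic functors), so I expect it to be routine; everything else is immediate from $R$ being a type-theoretic functor with $p \circ R = F$, which is why the statement is a corollary rather than a theorem.
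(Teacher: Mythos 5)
Your proposal is correct and matches the paper's intended argument: the paper gives no separate proof, presenting the corollary as an immediate consequence of soundness (i.e.\ of $R$ being a type-theoretic functor with $p \circ R = F$), which is exactly what you unpack. Your vertical--cartesian factorization of $Rt$ through $(Ft)^{*}RB$ and its reading in the internal language is precisely the bookkeeping the paper leaves implicit.
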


\section{Univalence in a Fibred Type-Theoretic Fibration Category}
\label{sec:org9fcc347}
\label{orgd199e01}
We construct a univalent universe in the total category
of a fibred type-theoretic fibration category
from univalent universes in the base category
and in the fiber at the terminal object.
The new universe is fibred in some sense
and called a \emph{fibred universe}.
A fibred univalent universe is preserved by the change of base
along a functor preserving small fibrations.
\subsection{Universes in a Fibred Setting}
\label{sec:org0e5a49a}

\begin{definition}
\label{sec-5-1-1}
\label{org6a6ca22}
\cite[Definition 6.12]{shulman2015inverse}
A fibration \(u : \tilde{U} \twoheadrightarrow U\)
in a type-theoretic fibration category \(\cat{C}\)
is a \emph{universe} if the following conditions hold,
where ``(\(u\)-)small fibration'' means ``a pullback of \(u\)''.
\begin{enumerate}
\item \label{org33bfa1a}
Small fibrations are closed under composition
and contain the identities.
\item \label{org9cad704}
If \(f : A \twoheadrightarrow B\) and \(g : B \twoheadrightarrow C\)
are small fibrations, so is \(\Pi_{g}f : \Pi_{g}A \twoheadrightarrow C\).
\item \label{orga34caef}
If \(A \twoheadrightarrow C\) and \(B \twoheadrightarrow C\)
are small fibrations, then
any morphism \(f : A \to B\) over \(C\)
factors as an acyclic cofibration followed by a small fibration.
\end{enumerate}
\end{definition}

\begin{definition}
\label{sec-5-1-2}
\label{org2fc2ff6}
Let \(p : \cat{E} \to \cat{B}\)
be a fibred type-theoretic fibration category.
A \emph{fibred universe} in \(p\)
is a fibration \(u : \tilde{U} \twoheadrightarrow U\) in \(\cat{E}\)
satisfying the following conditions.
\begin{enumerate}
\item Both \(u\) and \(pu\) are universes
in \(\cat{E}\) and \(\cat{B}\) respectively.
\item \label{org6476321}
For a \(u\)-small fibration \(f : A \twoheadrightarrow B\)
and a pullback
\[
\begin{tikzcd}
 pA \arrow[r,"k"] \arrow[d,twoheadrightarrow] & p\tilde{U} \arrow[d,twoheadrightarrow] \\
 pB \arrow[r,"h"'] & pU, \end{tikzcd}
\]
there exists a pullback
\[
\begin{tikzcd}
 A \arrow[r,"\bar{k}"] \arrow[d,twoheadrightarrow] & \tilde{U} \arrow[d,twoheadrightarrow] \\
 B \arrow[r,"\bar{h}"'] & U \end{tikzcd}
\]
above \(h\) and \(k\).
\item \label{org62afa58}
Every cartesian morphism above a \(pu\)-small fibration is
a \(u\)-small fibration.
\item \label{orgaf7f6ca}
For every \(u\)-small fibration \(g : A \twoheadrightarrow C\),
\(pu\)-small fibration \(s : pA \twoheadrightarrow J\)
and arbitrary morphism \(t : J \to pC\)
such that \(s \circ t = pg\),
the induced morphism \(A \to t^{*}C\)
is a \(u\)-small fibration.
\end{enumerate}
\end{definition}
\begin{proposition}
\label{sec-5-1-3}
\label{orgc1f21e5}
Let \(p : \cat{E} \to \cat{B}\)
be a fibred type-theoretic fibration category.
Let \(u : \tilde{U} \twoheadrightarrow U\)
be a fibration in \(\cat{B}\) and
\(v : \tilde{V} \twoheadrightarrow V\)
a fibration in \(\cat{E}_{1}\).
Suppose the following conditions hold,
where \(v_{I} : \tilde{V}_{I} \to V_{I}\)
is the reindexing of \(v\) along
the unique morphism \(I \to 1\) for \(I \in \cat{B}\).
\begin{enumerate}
\item The fibrations \(u\) and all \(v_{I}\) are universes.
\item The fibration \(u_{*}v_{\tilde{U}} : u_{*}\tilde{V}_{\tilde{U}} \twoheadrightarrow u_{*}V_{\tilde{U}}\)
is small in \(\cat{E}_{U}\).
\end{enumerate}
Then \(p\) has a fibred universe \(w : \tilde{W} \twoheadrightarrow W\)
above \(u\), defined as \(W = u_{*}V_{\tilde{U}}\) and
\(\tilde{W} = \varepsilon^{*}\tilde{V}_{\tilde{U}}\)
where \(\varepsilon : u^{*}u_{*}V_{\tilde{U}} \to V_{\tilde{U}}\)
is the counit of the adjunction \(u^{*} \dashv u_{*}\).
\[
\begin{tikzcd}
 [row sep=3ex] \tilde{V}_{\tilde{U}}  \arrow[d,twoheadrightarrow] & \varepsilon^{*}\tilde{V}_{\tilde{U}}  = \tilde{W} \arrow[l] \arrow[d,twoheadrightarrow] \arrow[dr,twoheadrightarrow,"w"] &  \\
 V_{\tilde{U}} & u^{*}u_{*}V_{\tilde{U}}  \arrow[l,"\varepsilon"'] \arrow[r] & u_{*}V_{\tilde{U}}  = W \\
 \tilde{U} & \tilde{U} \arrow[r,twoheadrightarrow,"u"'] \arrow[l,equal] & U \end{tikzcd}
\]
\end{proposition}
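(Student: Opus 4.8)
The plan is to transport the problem through Theorem~\ref{org5f7bf15}: since $p$ is a fibred type-theoretic fibration category, $\cat{E}$ carries the fiberwise-to-total structure of Theorem~\ref{org4e7cd71}, so its fibrations are the Reedy fibrations, its dependent products are those built in the proof of Lemma~\ref{org1e0b166}, and its factorizations are those built in the proof of Lemma~\ref{org1b5b6ac}. I write $v_I : \tilde{V}_I \twoheadrightarrow V_I$ for the universe of $\cat{E}_I$ and use freely that $s^* v_J \cong v_I$ for any $s : I \to J$ in $\cat{B}$, since $v_I$ is the reindexing of $v$ along $I \to 1$. The first step is that $w$ is a fibration: $pw = u$ is a fibration and the induced vertical morphism $\tilde{W} \to u^* W$ is, by construction, the pullback of $\tilde{V}_{\tilde{U}} \twoheadrightarrow V_{\tilde{U}}$ along $\varepsilon$, hence a fibration in $\cat{E}_{\tilde{U}}$; so $w$ is a Reedy fibration.

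The technical heart is a description of $w$-small fibrations: a morphism $f : A \to B$ of $\cat{E}$ with $pf$ a fibration is a $w$-small fibration if and only if $pf$ is $u$-small in $\cat{B}$ and the induced vertical morphism $\langle f \rangle : A \twoheadrightarrow (pf)^* B$ is $v_{pA}$-small in $\cat{E}_{pA}$. For ``only if'': applying $p$ to a classifying square for $f$ shows $pf$ is a pullback of $u$, since $p$ preserves pullbacks of fibrations, and reindexing that square into $\cat{E}_{pA}$ shows $\langle f \rangle$ is a pullback of the reindexed vertical part of $w$, which, because $\tilde{W} = \varepsilon^* \tilde{V}_{\tilde{U}}$, is a pullback of $v_{pA}$. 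For ``if'': pick a classifying map $h : pB \to U$ for $pf$, so $pA \cong h^* \tilde{U}$, and a classifier $\phi : (pf)^* B \to V_{pA}$ for $\langle f \rangle$; the Beck--Chevalley isomorphism of condition~\ref{org45c198d}, applied to the pullback of $u$ along $h$, gives $h^* W = h^* u_* V_{\tilde{U}} \cong (pf)_* V_{pA}$, and transposing $\phi$ along $(pf)^* \dashv (pf)_*$ produces a morphism $B \to W$ over $h$; a diagram chase through this transposition, the triangle identities for $u^* \dashv u_*$, and the definition of $\tilde{W}$ exhibits $f$ as the pullback of $w$ along it. This chase — propagating the Beck--Chevalley isomorphisms and the counit coherently through the computation — is what I expect to be the main obstacle; it also yields condition~\ref{org6476321} of Definition~\ref{org2fc2ff6}, since the square is produced over the prescribed $h$.

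A second ingredient I would isolate is that, for a $u$-small fibration $s : I \twoheadrightarrow J$ of $\cat{B}$, the right adjoint $s_* : \cat{E}_I \to \cat{E}_J$ of condition~\ref{org45c198d} sends $v_I$-small fibrations to $v_J$-small fibrations: reindexing the hypothesis that $u_* v_{\tilde{U}}$ is small in $\cat{E}_U$ along the classifying map of $s$, and identifying the result with $s_* v_I$ via Beck--Chevalley as above, shows $s_* v_I$ is $v_J$-small, and since $s_*$ preserves pullbacks it carries an arbitrary $v_I$-small fibration, being a pullback of $v_I$, to a pullback of $s_* v_I$.

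The remaining verifications then follow the description. Closure of $w$-small fibrations under identities and composition reduces to the same closure for $u$ and for the $v_I$ together with preservation of $v$-smallness under reindexing in $\cat{E}$ (for composites one uses $\langle gf \rangle \cong s^* \langle g \rangle \circ \langle f \rangle$ and $s^* v_J \cong v_I$). For dependent products I would feed the construction in the proof of Lemma~\ref{org1e0b166}: the fiberwise dependent product $\Pi_{s^* \langle g \rangle} \langle f \rangle$ is $v_I$-small because $v_I$ is a universe, the subsequent reindexings preserve $v$-smallness, the one pushforward occurring there is $\bar{t}_*$ along the $u$-small fibration $\bar{t}$ and so preserves $v$-smallness by the previous ingredient, and $p(\Pi_g f) = \Pi_t s$ is $u$-small. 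For the factorization-over-a-base axiom it suffices, via the mapping-path-space construction and closure under composition, to see that path objects of $w$-small fibrations are $w$-small; by the description of factorizations in the proof of Lemma~\ref{org1b5b6ac}, such a path object is a Reedy fibration whose base part is a path object in $\cat{B}$, small because $u$ has small identity types, and whose fiber part is an identity type in a fiber, small because the $v_I$ do, so it is $w$-small by the description. Finally, of the remaining conditions of Definition~\ref{org2fc2ff6}: a cartesian morphism above a $u$-small fibration has an isomorphism as vertical part and is therefore $w$-small (condition~\ref{org62afa58}); and the induced morphism $A \to t^* C$ of condition~\ref{orgaf7f6ca} has base part the given $u$-small morphism $s$ and the same vertical part as $g$, hence is $w$-small.
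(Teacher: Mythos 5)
Your proposal is correct and follows essentially the same route as the paper: the characterization of $w$-small fibrations as those whose base part is $u$-small and whose vertical part is $v_{pA}$-small (the paper's Lemma~\ref{orga29b979}, proved by the same Beck--Chevalley transposition along $(pf)^{*} \dashv (pf)_{*}$), the reduction of closure under dependent products to $s_{*}$ preserving smallness via $s_{*}v_{I} \cong k^{*}u_{*}v_{\tilde{U}}$, and the use of the explicit constructions from Lemmas~\ref{org1b5b6ac} and~\ref{org1e0b166}. Your slightly more explicit treatment of condition~\ref{orga34caef} via path objects and of the fibred-universe clauses only fills in steps the paper leaves as ``clear.''
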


\label{sec-5-1-4}
To prove Proposition \ref{orgc1f21e5},
we first characterize the \(w\)-small fibrations.
\begin{lemma}
\label{sec-5-1-4-1}
\label{orga29b979}
In the setting of Proposition \ref{orgc1f21e5},
a fibration \(f : A \twoheadrightarrow B\) in \(\cat{E}\)
is \(w\)-small if and only if
\(pf\) is \(u\)-small and
the induced morphism \(A \twoheadrightarrow (pf)^{*}B\)
is \(v_{pX}\)-small.
\end{lemma}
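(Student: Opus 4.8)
The plan is to describe explicitly which fibrations arise as pullbacks of the Reedy fibration $w : \tilde W \twoheadrightarrow W$, and to read off both directions of the equivalence from that description. Recall from Proposition~\ref{orgc1f21e5} that $W = u_{*}V_{\tilde U}$ lies over $U$, that $pw = u$, and that $w$ factors in $\cat{E}$ as a vertical fibration $\varepsilon^{*}v_{\tilde U} : \tilde W = \varepsilon^{*}\tilde V_{\tilde U} \twoheadrightarrow u^{*}u_{*}V_{\tilde U} = u^{*}W$ in $\cat{E}_{\tilde U}$ followed by the cartesian lift $u^{*}W \to W$ of $u$. A preliminary observation I would make is the bookkeeping identity $k^{*}V_{\tilde U} \cong V_{pA}$ (and similarly for $\tilde V_{\tilde U}$ and $v_{\tilde U}$) for any $k : pA \to \tilde U$: the composite $pA \xrightarrow{k} \tilde U \to 1$ is forced to be the projection to the terminal object, and $V_{\tilde U}$, $V_{pA}$ are by definition the reindexings of $v \in \cat{E}_{1}$ along $\tilde U \to 1$ and $pA \to 1$.

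Next I would compute the pullback of $w$ along an arbitrary morphism $\bar h : B \to W$ lying over some $h : pB \to U$, using the above factorization. Pulling back the cartesian part $u^{*}W \to W$ along $\bar h$ yields a cartesian morphism $(pf)^{*}B \to B$ over $pf := h^{*}u : pA := h^{*}\tilde U \twoheadrightarrow pB$, whose second leg is a morphism $(pf)^{*}B \to u^{*}W$ over the other projection $k : pA \to \tilde U$; pulling back the vertical part $\varepsilon^{*}v_{\tilde U}$ along this, then reindexing along $k$ and applying the bookkeeping identity, exhibits the total object $A$ of the pullback as a pullback of $v_{pA}$ along the composite $(pf)^{*}B \to k^{*}u^{*}W \xrightarrow{k^{*}\varepsilon} V_{pA}$, with $A \twoheadrightarrow (pf)^{*}B$ the classified fibration --- and this is precisely the induced morphism occurring in the statement. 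This already gives the ``only if'' direction: if $f : A \twoheadrightarrow B$ is $w$-small then $pf$ is $u$-small and $A \twoheadrightarrow (pf)^{*}B$ is $v_{pA}$-small.

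For the converse I would start from a pullback square witnessing $pf$ as $u$-small (with classifying maps $h : pB \to U$ and $k : pA \to \tilde U$) and a classifying map $c : (pf)^{*}B \to V_{pA}$ for the $v_{pA}$-small fibration $A \twoheadrightarrow (pf)^{*}B$, and produce the required $\bar h : B \to W$ over $h$. Here I would invoke the right adjoints $s_{*}$ and the Beck--Chevalley condition supplied by the hypotheses of Theorem~\ref{org24a9bbb}, applied to the square exhibiting $pf = h^{*}u$: this gives $h^{*}W = h^{*}u_{*}V_{\tilde U} \cong (pf)_{*}k^{*}V_{\tilde U} \cong (pf)_{*}V_{pA}$, so transposing $c$ across $(pf)^{*}\dashv(pf)_{*}$ yields a vertical morphism $B \to h^{*}W$ in $\cat{E}_{pB}$, hence $\bar h : B \to W$ over $h$. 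Running the previous computation in reverse, the pullback of $w$ along this $\bar h$ is a Reedy fibration with base part $pf$ and vertical part the pullback of $v_{pA}$ along $c$, namely $A \twoheadrightarrow (pf)^{*}B$; since a Reedy fibration is determined by its base and vertical parts (Definition~\ref{orga37332a}), this pullback is $f$, so $f$ is $w$-small.

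The step I expect to be the main obstacle is the coherence underlying the converse: one must check that, after the Beck--Chevalley identification $h^{*}u_{*} \cong (pf)_{*}k^{*}$, the reindexed counit $k^{*}\varepsilon_{V_{\tilde U}} : k^{*}u^{*}u_{*}V_{\tilde U} \to V_{pA}$ agrees with the component at $V_{pA}$ of the counit of $(pf)^{*}\dashv(pf)_{*}$, so that ``transpose $c$, then reindex along $pf$ and compose with $k^{*}\varepsilon$'' returns $c$ and the two passages are genuinely inverse. This is a mate-style diagram chase using the coherences of the fibred structure of $p$ and of the adjunctions $s^{*}\dashv s_{*}$; everything else in the argument is a routine unwinding of the definitions of Reedy fibration and of $w$.
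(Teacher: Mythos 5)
Your proposal is correct and follows essentially the same route as the paper: compute pullbacks of $w$ via its factorization into a vertical part (a pullback of $v_{\tilde U}$) and a cartesian part over $u$, read off the ``only if'' direction, and for the converse transpose the classifying map of $A \twoheadrightarrow (pf)^{*}B$ across $(pf)^{*} \dashv (pf)_{*}$ and use the Beck--Chevalley isomorphism $h^{*}u_{*} \cong (pf)_{*}k^{*}$ to produce the map into $W$. The coherence point you flag (that $k^{*}\varepsilon$ matches the counit of $(pf)^{*}\dashv(pf)_{*}$ under Beck--Chevalley) is likewise left implicit in the paper's proof, and your reading of the statement's ``$v_{pX}$'' as $v_{pA}$ is the intended one.
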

\begin{proof}
\label{sec-5-1-4-2}
Observe that a pullback of \(w\) along \(g : B \to W\)
is calculated by:
\begin{enumerate}
\item pulling \(\tilde{U}\) back along \(pg\) as
\[
\begin{tikzcd}
 I \arrow[r,"k"] \arrow[d,twoheadrightarrow,"s"] & \tilde{U} \arrow[d,twoheadrightarrow,"u"] \\
 pB \arrow[r,"pg"'] & U, \end{tikzcd}
\]
and
\item calculating a pullback square in \(\cat{E}_{I}\)
\[
\begin{tikzcd}
 g^{*}W \arrow[rr] \arrow[d,twoheadrightarrow] &  & k^{*}\tilde{W} \arrow[d,twoheadrightarrow] \\
 s^{*}B \arrow[r] & s^{*}(pg)^{*}W \arrow[r,"\cong"] & k^{*}u^{*}W. \end{tikzcd}
\]
\end{enumerate}
The second pullback can be extended to pullbacks in \(\cat{E}_{I}\)
\[
\begin{tikzcd}
 g^{*}W \arrow[d,twoheadrightarrow] \arrow[r] & k^{*} \varepsilon^{*}\tilde{V}_{\tilde{U}}  \arrow[d,twoheadrightarrow] \arrow[r] & k^{*}\tilde{V}_{\tilde{U}}  \arrow[d,twoheadrightarrow] \arrow[r,"\cong"] & \tilde{V}_{I}  \arrow[d,twoheadrightarrow,"v_{I}"] \\
 s^{*}B \arrow[r] & k^{*}u^{*}u_{*}V_{\tilde{U}}  \arrow[r,"k^{*} \varepsilon"'] & k^{*}V_{\tilde{U}}  \arrow[r,"\cong"] & V_{I}. \end{tikzcd}
\]
Therefore if \(f : A \twoheadrightarrow B\) is a pullback of \(w\),
then \(pf\) is a pullback of \(u\) and the induced morphism \(A \twoheadrightarrow (pf)^{*}B\)
is a pullback of \(v_{pA}\).
Conversely, suppose \(pf\) is a pullback of \(u\) along \(t : pB \to U\)
with upper morphism \(k : pA \to \tilde{U}\),
and \(A \twoheadrightarrow (pf)^{*}B\) is a pullback of
\(v_{pA}\) along \(h : (pf)^{*}B \to V_{pA}\) above \(pA\).
Then there exists a unique morphism \(\bar{h} : B \to (pf)_{*}V_{pA}\) above \(pB\)
such that \(\varepsilon' \circ (pf)^{*}\bar{h} = h\)
where \(\varepsilon' : (pf)^{*}(pf)_{*} \Rightarrow 1\)
is the counit of the adjunction \((pf)^{*} \dashv (pf)_{*}\).
But \((pf)^{*}(pf)_{*}V_{pA} \cong (pf)^{*}(pf)_{*}k^{*}V_{\tilde{U}}
\cong (pf)^{*}t^{*}u_{*}V_{\tilde{U}} \cong k^{*}u^{*}u_{*}V_{\tilde{U}}
= k^{*}u^{*}W\),
and thus \(A\) is a pullback of \(k^{*}\tilde{W} \twoheadrightarrow k^{*}u^{*}W\)
along \((pf)^{*}\bar{h} : (pf)_{*}B \to k^{*}u^{*}W\).
Hence \(f : A \twoheadrightarrow B\) is a pullback of \(w\).
\end{proof}

\begin{proof}[{Proof of Proposition \ref{orgc1f21e5}}]
\label{sec-5-1-4-3}
We check that \(w\) satisfies the conditions of Definition \ref{org6a6ca22}.
Using Lemma \ref{orga29b979},
the condition \ref{org33bfa1a} is clear.
The condition \ref{orga34caef} follows from the construction of factorization in \(\cat{E}\)
given in Lemma \ref{org1b5b6ac}.
By the construction of dependent products in \(\cat{E}\)
given in Lemma \ref{org1e0b166},
in order to show \ref{org9cad704}
it is enough to prove that
for every \(u\)-small fibration \(f : I \twoheadrightarrow J\),
\(f_{*}\) preserves small fibrations.
To see this it is enough to show that
\(f_{*}v_{I} : f_{*}\tilde{V}_{I} \twoheadrightarrow f_{*}V_{I}\)
is a \(v_{J}\)-small fibration,
because \(f_{*}\) preserves pullbacks.
Suppose \(f\) is a pullback of \(u\) along \(k : J \to U\)
with upper morphism \(h : I \to \tilde{U}\).
Then \(f_{*}v_{I} \cong f_{*}h^{*}v_{\tilde{U}}
\cong k^{*}u_{*}v_{\tilde{U}}\) by the Beck-Chevalley condition.
Now \(u_{*}v_{\tilde{U}}\) is a small fibration by assumption,
and thus so is \(f_{*}v_{I}\).

It is easy to show that \(w\) is a fibred universe
using Lemma \ref{orga29b979}.
\end{proof}

\label{sec-5-1-5}
The change of base along a suitable functor
creates a new fibred universe.
\begin{proposition}
\label{sec-5-1-5-1}
\label{org4a92ba1}
Let \(p : \cat{E} \to \cat{B}\)
be a fibred type-theoretic fibration category
with a fibred universe \(w : \tilde{W} \twoheadrightarrow W\)
above \(u : \tilde{U} \twoheadrightarrow U\),
\(\cat{A}\) a type-theoretic fibration category
with a universe \(v : \tilde{V} \twoheadrightarrow V\),
and \(F : \cat{A} \to \cat{B}\) a functor preserving
fibrations, pullbacks of fibrations and acyclic cofibrations.
Suppose \(Fv\) is a pullback of \(u\) along a morphism \(h : FV \to U\)
with upper morphism \(k : F\tilde{V} \to \tilde{U}\).
Then \((v, h^{*}w) : (\tilde{V}, k^{*}\tilde{W}) \twoheadrightarrow (V, h^{*}W)\)
is a fibred universe in \(F^{*}\cat{E}\).
\end{proposition}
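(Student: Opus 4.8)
The plan is to invoke Proposition~\ref{orgc8a114a}, which already gives that $q := F^{*}p : F^{*}\cat{E} \to \cat{A}$ is a fibred type-theoretic fibration category whose fibrations are the levelwise ones; since $q(v, h^{*}w) = v$ is a universe by hypothesis, it remains only to show that $(v, h^{*}w)$ is a universe in $F^{*}\cat{E}$ (Definition~\ref{org6a6ca22}) and satisfies the three remaining clauses of Definition~\ref{org2fc2ff6} relative to $q$. Everything will rest on an analogue of Lemma~\ref{orga29b979}: a fibration $(s, f) : (I, X) \twoheadrightarrow (J, Y)$ in $F^{*}\cat{E}$ is $(v, h^{*}w)$-small if and only if $s$ is $v$-small in $\cat{A}$ and $f$ is $w$-small in $\cat{E}$. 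To prove it I would first note that, since the square with edges $k, h, Fv, u$ is a pullback by hypothesis, the square in $\cat{E}$ obtained by pulling $\tilde{W}$ and $W$ back along $k$ and $h$ exhibits $h^{*}w : k^{*}\tilde{W} \to h^{*}W$ as a pullback of $w$; as $F^{*}\cat{E}$ is the strict pullback category $\cat{A} \times_{\cat{B}} \cat{E}$, both projections $q$ and $\hat{F} : F^{*}\cat{E} \to \cat{E}$ preserve pullbacks, so a pullback of $(v, h^{*}w)$ has $\cat{A}$-component a pullback of $v$ and $\cat{E}$-component a pullback of $h^{*}w$, hence of $w$ --- this is the ``only if'' direction. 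For ``if'', writing $s = b^{*}v$ with $b : J \to V$, the morphism $Fs$ is a pullback of $u$ along $h \circ Fb$ (because $F$ preserves pullbacks and $Fv$ is one); applying clause~\ref{org6476321} of the fibred universe $w$ to the $w$-small $f$ over this prescribed base square yields a classifying map $Y \to W$ lying over $h \circ Fb$, which factors through the cartesian $h^{*}W \to W$ as a map $\rho : Y \to h^{*}W$ over $Fb$, and then $(s, f)$ is exactly the levelwise pullback of $(v, h^{*}w)$ along $(b, \rho)$.

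Granting this characterization, clause~\ref{org33bfa1a} of Definition~\ref{org6a6ca22} for $(v, h^{*}w)$ is immediate from the corresponding facts for the universes $v$ and $w$. For the $\Pi$-clause~\ref{org9cad704} I would inspect the dependent-product construction in the proof of Proposition~\ref{orgc8a114a}: its $\cat{A}$-component is $\Pi_{t}s$, which is $v$-small since $v$ is a universe, and its $\cat{E}$-component $\varepsilon'^{*}\Pi_{g}A \to C$ is the reindexing along $\varepsilon'$ of the vertical morphism $\Pi_{g}A \twoheadrightarrow (\Pi_{Ft}Fs)^{*}C$, followed by the cartesian morphism $(F(\Pi_{t}s))^{*}C \to C$; the first is $w$-small, being the reindexing of the $w$-small fibration obtained from the $w$-small $\Pi_{g}f$ by clause~\ref{orgaf7f6ca} of the fibred universe $w$, and the second is $w$-small by clause~\ref{org62afa58} of the fibred universe $w$ since $F(\Pi_{t}s)$ is $u$-small (as $F$ sends pullbacks of $v$ to pullbacks of $u$), so the composite is $w$-small and hence $\Pi_{(t,g)}(s,f)$ is $(v, h^{*}w)$-small. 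The remaining clauses~\ref{org6476321}, \ref{org62afa58} and \ref{orgaf7f6ca} of Definition~\ref{org2fc2ff6} for $(v, h^{*}w)$ relative to $q$ then follow from the characterization together with the same-numbered clauses for the fibred universe $w$: clause~\ref{org6476321} is the ``if'' argument above with the base classifying map prescribed, and for clauses~\ref{org62afa58} and \ref{orgaf7f6ca} one uses that cartesian morphisms and reindexings in $F^{*}\cat{E}$ have $\cat{E}$-component a cartesian morphism, resp.\ a reindexing, in $\cat{E}$, choosing the evident $u$-small base fibrations in $\cat{A}$.

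The step I expect to be the main obstacle is the factorization clause~\ref{orga34caef} of Definition~\ref{org6a6ca22} for $(v, h^{*}w)$: given $(v, h^{*}w)$-small fibrations $(I, X), (J, Y) \twoheadrightarrow (K, Z)$ and a morphism $(s, f)$ between them over $(K, Z)$, one must factor $(s, f)$ as a levelwise acyclic cofibration followed by a $(v, h^{*}w)$-small fibration, and the difficulty is to align the $\cat{A}$- and $\cat{E}$-factorizations while keeping the fibre part $w$-small. The plan is: factor $s$ over $K$ by clause~\ref{orga34caef} for the universe $v$ into $I \overset{\sim}{\rightarrowtail} \tilde{I} \twoheadrightarrow J$ with $v$-small fibration part; split off the cartesian part of $f$ over the base acyclic cofibration by forming $(Fa)_{!}X$ via Lemma~\ref{org74ddcdc} and using the lifting property of Proposition~\ref{org4fa27cf} against the vertical fibration $Y \twoheadrightarrow 1$, obtaining $f = f_{1} \circ \eta$ with $\eta$ a cartesian acyclic cofibration over $Fa$ and $f_{1}$ over $Fb$; then factor $f_{1}$ vertically over $\tilde{I}$ by clause~\ref{orga34caef} for the universe $w$, the needed $w$-smallness of the pieces being supplied by clauses~\ref{org62afa58} and \ref{orgaf7f6ca} of the fibred universe $w$ exactly as in the proof of Proposition~\ref{orgc1f21e5}. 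This bookkeeping, rather than any single hard idea, is where the work lies.
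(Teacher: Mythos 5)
Your proposal takes essentially the same route as the paper's proof: the same characterization of the $(v, h^{*}w)$-small fibrations as the levelwise ($v$-small, $w$-small) ones, established via clause \ref{org6476321} of Definition \ref{org2fc2ff6} exactly as in the paper; the same verification of closure under dependent products by factoring $\varepsilon'^{*}\Pi_{g}A \twoheadrightarrow C$ as in the construction of Proposition \ref{orgc8a114a} and invoking clauses \ref{org62afa58} and \ref{orgaf7f6ca} for $w$; and the remaining universe and fibredness clauses deduced from the characterization. The only difference is one of emphasis: the paper treats the factorization clause \ref{orga34caef} as immediate from the characterization, while you spell out the alignment of the $\cat{A}$- and $\cat{E}$-factorizations via Lemma \ref{org74ddcdc} and Proposition \ref{org4fa27cf}, which is consistent with (indeed more explicit than) what the paper leaves implicit.
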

\begin{proof}
\label{sec-5-1-5-2}
First we show that a fibration \((s, f) : (I, A) \twoheadrightarrow (J, B)\)
is \((v, h^{*}w)\)-small if and only if
\(s\) is \(v\)-small and \(f\) is \(w\)-small.
The ``only if'' part is trivial.
To show the converse, suppose \(s\) is \(v\)-small and \(f\) is \(w\)-small.
Then \(s\) is a pullback of \(v\) along some morphism \(t : J \to V\).
Since \(f\) is a \(w\)-small fibration and
\(pf = Fs\) is a pullback of \(u\) along \(h \circ Ft\),
\(f\) is a pullback of \(w\) along some morphism \(k\) above \(h \circ Ft\)
by the condition \ref{org6476321} of Definition \ref{org2fc2ff6}.
Therefore \(f\) is a pullback of \(h^{*}w\)
along the induced morphisms \(B \to h^{*}W\) above \(Ft\),
and this means that \((s, f)\) is a pullback of \((v, h^{*}w)\) in \(F^{*}\cat{E}\).

We show that \((v, h^{*}w)\) is a universe in \(F^{*}\cat{E}\).
The conditions \ref{org33bfa1a} and \ref{orga34caef} of Definition \ref{org6a6ca22}
follows from the above characterization of \((v, h^{*}w)\)-small fibrations.
To show the condition \ref{org9cad704},
let \((s, f) : (I, A) \twoheadrightarrow (J, B)\)
and \((t, g) : (J, B) \twoheadrightarrow (K, C)\)
be \((v, h^{*}w)\)-small fibrations.
By the construction of dependent products in \(F^{*}\cat{E}\)
described in the proof of Proposition \ref{orgc8a114a},
it is enough to show that
\(\varepsilon'^{*}\Pi_{g}A \twoheadrightarrow C\)
is a \(w\)-small fibration,
where \(\varepsilon' : F(\Pi_{t}I) \to \Pi_{Ft}FI\)
is the canonical morphism.
This fibration factors as
\[
\begin{tikzcd}
 \varepsilon'^{*} \Pi_{g}A \arrow[r] \arrow[d,twoheadrightarrow,"h'"'] & \Pi_{g}A \arrow[dr,twoheadrightarrow,"\Pi_{g}f"] \arrow[d,twoheadrightarrow,"h"] &  \\
 (F(\Pi_{t}s))^{*}C \arrow[r] \arrow[rr,twoheadrightarrow,bend right=15,"l"'] & (\Pi_{Ft}Fs)^{*}C \arrow[r,"k"] & C. \end{tikzcd}
\]
Since \(\Pi_{g}f\), \(F(\Pi_{t}s)\) and \(\Pi_{Ft}Fs\)
are small fibrations,
\(k\) and \(l\) are \(w\)-small by the condition \ref{org62afa58} of Definition \ref{org2fc2ff6},
and \(h\) is \(w\)-small by the condition \ref{orgaf7f6ca}.
The left square is a pullback,
and thus \(h'\) is a \(w\)-small fibration and so is \(\varepsilon'^{*}\Pi_{g}A \twoheadrightarrow C\).

It is clear that the new universe \((v, h^{*}w)\)
is a fibred universe in \(F^{*}\cat{E} \to \cat{A}\).
\end{proof}

\subsection{Univalence in a Fibred Setting}
\label{sec:org1581480}
For a fibration \(u : \tilde{U} \twoheadrightarrow U\),
write \(E(u) \twoheadrightarrow U \times U\)
for the fibration corresponding to the type
\(a : U, b : U \vdash \tilde{U}(a) \simeq \tilde{U}(b)\),
where \(A \simeq B \equiv
\Sigma_{f : A \to B}(\Sigma_{g : B \to A}\Pi_{x : A}g(fx) = x)
\times (\Sigma_{h : B \to A}\Pi_{y : B}f(hy) = y)\)
is the type of bi-invertible maps.
The object \(E(u)\) has the following universal property:
for a morphism \(\langle a, b \rangle : X \to U \times U\),
there is a natural one-to-one correspondence between
the set of the morphisms \(X \to E(u)\) over \(U \times U\) and
the set of the quintuples
\((f : a^{*}\tilde{U} \to b^{*}\tilde{U},
g : b^{*}\tilde{U} \to a^{*}\tilde{U}, \sigma : a^{*}\tilde{U} \to P_{U}\tilde{U},
h : b^{*}\tilde{U} \to a^{*}\tilde{U}, \tau : b^{*}\tilde{U} \to P_{U}\tilde{U})\)
such that
\(f\), \(g\) and \(h\) are over \(X\)
and the following diagrams commute
\[
\begin{tikzcd}
 [column sep=2ex] a^{*}\tilde{U} \arrow[r,"\sigma"] \arrow[d,"\pair{gf}{1}"'] & P_{U}\tilde{U} \arrow[d,twoheadrightarrow] & b^{*}\tilde{U} \arrow[r,"\tau"] \arrow[d,"\pair{fh}{1}"'] & P_{U}\tilde{U} \arrow[d,twoheadrightarrow] \\
 a^{*}\tilde{U} \times_{X}  a^{*}\tilde{U} \arrow[r] & \tilde{U} \times_{U}  \tilde{U} & b^{*}\tilde{U} \times_{X}  b^{*}\tilde{U} \arrow[r] & \tilde{U} \times_{U}  \tilde{U}. \end{tikzcd}
\]
Note that this definition depends on the choice of path object \(P_{U}\tilde{U}\),
and we assume that every fibration has a fixed path object
in the rest of this section.
There are canonical morphisms
\((f_{u} : \pi_{1}^{*}\tilde{U} \to \pi_{2}^{*}\tilde{U}, g_{u} : \pi_{2}^{*}\tilde{U} \to \pi_{1}^{*}\tilde{U},
\sigma_{u} : \pi_{1}^{*}\tilde{U} \to P_{U}\tilde{U}, h_{u} : \pi_{2}^{*}\tilde{U} \to \pi_{1}^{*}\tilde{U},
\tau_{u} : \pi_{2}^{*}\tilde{U} \to P_{U}\tilde{U})\)
corresponding to the identity \(E(u) \to E(u)\),
where \(\pi_{1}, \pi_{2} : E(u) \to U\) are projections.
There is a canonical morphism
\(e(u) : U \to E(u)\) over the diagonal morphism \(U \to U \times U\)
which corresponds to the identity function.
\begin{definition}
\label{sec-5-2-1}
A fibration \(u : \tilde{U} \twoheadrightarrow U\) is \emph{univalent} if
the canonical morphism \(e(u) : U \to E(u)\) is a homotopy equivalence.
\end{definition}
\begin{lemma}
\label{sec-5-2-2}
\label{org30c30d1}
In a fibred type-theoretic fibration category \(\cat{E} \to \cat{B}\),
every cartesian morphism above a homotopy equivalence
is a homotopy equivalence.
\end{lemma}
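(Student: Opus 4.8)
Let me sketch a proof. The plan is to reduce the statement, via the factorisation of the base homotopy equivalence, to the two basic closure properties of cartesian morphisms in Definition~\ref{org204acfc}, the only genuine work being an ``acyclic fibration'' case that I treat by running path induction inside the fibration.

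First I would let $f : A \to B$ be cartesian above a homotopy equivalence $s : I \to J$ in $\cat{B}$, and factor $s$ as an acyclic cofibration $j : I \overset{\sim}{\rightarrowtail} K$ followed by a fibration $q : K \twoheadrightarrow J$. Acyclic cofibrations are homotopy equivalences, and since $\sim$ is a congruence on hom-sets the homotopy equivalences satisfy two-out-of-three; hence $q$ is a homotopy equivalence, i.e.\ an acyclic fibration. Using $A \cong s^{*}B = j^{*}q^{*}B$, the morphism $f$ decomposes as a cartesian morphism $A \to q^{*}B$ above $j$ followed by a cartesian morphism $c : q^{*}B \to B$ above $q$. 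The first is an acyclic cofibration by condition~\ref{orgee4a1a2} of Definition~\ref{org204acfc}, hence a homotopy equivalence; the second is a fibration by condition~\ref{org6d6527a} of Definition~\ref{org204acfc}. Since homotopy equivalences compose, it remains to treat a cartesian morphism $c : q^{*}B \to B$ above an acyclic fibration $q : K \twoheadrightarrow J$.

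For that case I would isolate a ``homotopy invariance of reindexing'': if $a \sim b : L \to M$ in $\cat{B}$ then $a^{*}X$ and $b^{*}X$ are homotopy equivalent in $\cat{E}_{L}$, naturally in $X \in \cat{E}_{M}$. Granting this, I take a homotopy inverse $g : J \to K$ of $q$; the cartesian lift of $g$ at $q^{*}B$ has domain $g^{*}(q^{*}B) \cong (qg)^{*}B$ above $J$, and since $qg \sim 1_{J}$ the invariance statement gives a homotopy equivalence $B \xrightarrow{\sim} (qg)^{*}B = g^{*}(q^{*}B)$ in $\cat{E}_{J}$; composing it with the cartesian lift yields a morphism $B \to q^{*}B$ above $g$, which a careful but routine check — using the homotopies witnessing $qg \sim 1_{J}$ and $gq \sim 1_{K}$ together with the cartesian universal properties — shows to be a homotopy inverse of $c$ in $\cat{E}$. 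To prove the invariance statement itself, I write the homotopy as $H : L \to P_{M}M$ with $\partial_{0}H = a$ and $\partial_{1}H = b$; since reindexing functors in a fibred type-theoretic fibration category are type-theoretic (Theorem~\ref{org5f7bf15}), $H^{*}$ preserves homotopy equivalences by Lemma~\ref{orgb900b60}, so it suffices to prove $\partial_{0}^{*}X \simeq \partial_{1}^{*}X$ in $\cat{E}_{P_{M}M}$ — that is, ``transport is an equivalence''. The reflexivity acyclic cofibration $\rho : M \overset{\sim}{\rightarrowtail} P_{M}M$ has $\partial_{0}$ and $\partial_{1}$ as retractions, so by Lemma~\ref{org74ddcdc} there are cartesian morphisms $j_{0} : X \to \partial_{0}^{*}X$ and $j_{1} : X \to \partial_{1}^{*}X$ above $\rho$, which are acyclic cofibrations by condition~\ref{orgee4a1a2} of Definition~\ref{org204acfc}. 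Applying Proposition~\ref{org4fa27cf} to lift $j_{0}$ against the vertical fibration $\partial_{1}^{*}X \twoheadrightarrow 1_{P_{M}M}$, and symmetrically, produces \emph{vertical} morphisms $\varphi : \partial_{0}^{*}X \to \partial_{1}^{*}X$ and $\psi : \partial_{1}^{*}X \to \partial_{0}^{*}X$ with $\varphi j_{0} = j_{1}$ and $\psi j_{1} = j_{0}$; as $j_{0},j_{1}$ are homotopy equivalences this forces $\psi\varphi \sim 1$ and $\varphi\psi \sim 1$, so $\varphi$ is the required equivalence (one more application of Proposition~\ref{org4fa27cf} being needed to make these homotopies vertical, so that $\varphi$ is an equivalence in the fibre and not merely in $\cat{E}$).

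The hard part is this last block: carrying out ``path induction'' inside the fibration repeatedly forces one to produce fillers lying over identity maps of the base, and Proposition~\ref{org4fa27cf}, which controls the base component of a filler, is exactly what makes that possible. The remaining obstacle is bookkeeping — tracking the several path objects, in $\cat{B}$, in the fibres, and their interaction with cartesian lifts — together with the verification in the acyclic-fibration case that the assembled morphism $B \to q^{*}B$ is genuinely inverse to $c$.
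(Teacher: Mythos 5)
Your reduction to a cartesian morphism \(c : q^{*}B \to B\) above an acyclic fibration \(q : K \twoheadrightarrow J\), and your transport-invariance lemma (including the verticalization of the homotopies by one more use of Proposition~\ref{org4fa27cf}), can be made to work, though each step hides further lifting arguments. The genuine gap is the step you dismiss as a ``careful but routine check'': that the assembled morphism \(d : B \to q^{*}B\) is a \emph{two-sided} homotopy inverse of \(c\). One direction is fine: writing \(H : qg \sim 1_{J}\) for the homotopy used to build the transport equivalence \(e : B \to (qg)^{*}B\), the canonical transport paths over \(H\) (made natural by one more lifting of the kind you already use) give \(cd \sim 1_{B}\). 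But for \(dc \sim 1_{q^{*}B}\) you must produce, for each point of \(q^{*}B\), a path \(\alpha_{k} : gqk = k\) in \(K\) together with a path in \(B\) over \(q\alpha_{k}\) from the transported element to the original one; the only path your construction supplies lies over \(H_{qk}\), so you need a homotopy \(\alpha : gq \sim 1_{K}\) with \(q\alpha \sim Hq\) coherently. For an arbitrary choice of inverse \(g\) and homotopies this coherence is simply not available --- it is exactly the triangle law of a half adjoint equivalence, and it is precisely the point the paper's proof addresses by first improving the base equivalence to a half adjoint equivalence (using the standard logical equivalence recalled in \Sect\ref{orgc341df6}) and then invoking \(f\eta \sim_{J \times J} \varepsilon f\) in the internal-language construction of the inverse. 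So ``the cartesian universal properties plus the two unstructured homotopies'' do not suffice; the check is not routine, and as written your argument only yields a one-sided inverse.

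There are two natural repairs. Either drop the factorization altogether and argue as the paper does: upgrade the base homotopy equivalence to a half adjoint equivalence and define the inverse by backward transport along \(\varepsilon\), the coherence giving the second homotopy directly --- this is a one-paragraph internal-language argument. Or keep your reduction and exploit that \(q\) is an acyclic \emph{fibration}: such a map admits a strict section \(g\) (rectify the homotopy \(qg_{0} \sim 1_{J}\) through the fibration) together with a fiberwise homotopy \(gq \sim_{J} 1_{K}\), after which both composites really are checked routinely. But these facts are nowhere in the paper and would themselves have to be proved, so this route is substantially longer than the paper's, without avoiding the coherence issue so much as relocating it.
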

\begin{proof}
\label{sec-5-2-3}
We show that a cartesian morphism above a half adjoint equivalence
is a homotopy equivalence.
Suppose \(f : I \to J\) is a half adjoint equivalence in \(\cat{B}\)
with \(g : J \to I\), \(\eta : gf \sim 1\) and \(\varepsilon : fg \sim 1\)
and \(Y \in \cat{E}_{J}\).
We construct a homotopy inverse \(\bar{g}\)
of the cartesian morphism \(\bar{f} : f^{*}Y \to Y\).
In the internal language,
\(\bar{f}\) is the identity
\(i : I \mid y : Y(fi) \vdash y : Y(fi)\) and
\(\bar{g}\) is a term of type
\(j : J \mid y : Y(j) \vdash \bar{g}(j; y) : Y(f(gj))\).
We set \(\bar{g}(j; y) \equiv \varepsilon_{j}^{*}y\),
a backward transport of \(y\) along the path \(\varepsilon_{j} : f(gj) = j\).
Then there exists a homotopy
\(j : J \mid y : Y(j) \vdash \bar{\varepsilon} : \bar{g}(j; y) =_{\varepsilon_{j}} y\).
Since \(f\eta \sim_{J \times J} \varepsilon f\), there exists a homotopy
\(i : I \mid y : Y(fi) \vdash \bar{\eta} : \bar{g}(fi; y) =_{f\eta_{i}} y\).
Hence \(\bar{g}\) is a homotopy inverse of \(f\).
\end{proof}

\begin{lemma}
\label{sec-5-2-4}
\label{org899ecae}
Let \(p : \cat{E} \to \cat{B}\)
be a fibred type-theoretic fibration category.
For a morphism \(f : X \to Y\) in \(\cat{E}\),
\(f\) is a homotopy equivalence if and only if
\(pf\) and the induced morphism \(X \to (pf)^{*}Y\)
are homotopy equivalences in \(\cat{B}\) and \(\cat{E}_{pX}\) respectively.
\end{lemma}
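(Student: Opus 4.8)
The plan is to reduce the statement to the already-established Lemma~\ref{org30c30d1} by means of the canonical factorization of a morphism in a fibred category. Write $\bar{f} : (pf)^{*}Y \to Y$ for the cartesian morphism above $pf$ and $\langle f \rangle : X \to (pf)^{*}Y$ for the vertical morphism above $1_{pX}$ with $f = \bar f \circ \langle f \rangle$; the morphism $\langle f\rangle$ is exactly the one named in the statement. Before either implication I would record a comparison between the type-theoretic structure of a fiber $\cat{E}_{I}$ and that of $\cat{E}$: by the definition of Reedy (acyclic) cofibrations (Definition~\ref{orga37332a}) and Theorems~\ref{org24a9bbb} and \ref{org5f7bf15}, a vertical morphism of $\cat{E}$ is a fibration, resp.\ an acyclic cofibration, in $\cat{E}_{I}$ if and only if it is one in $\cat{E}$ --- a cartesian morphism above an identity is an isomorphism, so a vertical Reedy acyclic cofibration is precisely an acyclic cofibration of the fiber, and Reedy acyclic cofibrations lift against all fibrations of $\cat{E}$; pullbacks of fibrations, and factorizations obtained from Lemma~\ref{org4b23fdf}, in $\cat{E}_{I}$ agree with those computed in $\cat{E}$. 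Consequently the inclusion $\cat{E}_{I} \hookrightarrow \cat{E}$ preserves fibrations, pullbacks of fibrations and acyclic cofibrations, hence preserves homotopy equivalences by Lemma~\ref{orgb900b60}; in particular a homotopy equivalence in $\cat{E}_{I}$ is one in $\cat{E}$.

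For the "if" direction, assume $pf$ is a homotopy equivalence in $\cat{B}$ and $\langle f \rangle$ is one in $\cat{E}_{pX}$. Then $\bar f$ is a homotopy equivalence in $\cat{E}$ by Lemma~\ref{org30c30d1}, and $\langle f \rangle$ is one in $\cat{E}$ by the comparison above; since homotopy equivalences in a type-theoretic fibration category are closed under composition (because $\sim$ is a congruence), $f = \bar f \circ \langle f \rangle$ is a homotopy equivalence in $\cat{E}$.

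For the "only if" direction, assume $f$ is a homotopy equivalence in $\cat{E}$. As $p$ is a type-theoretic functor it preserves homotopy equivalences (Lemma~\ref{orgb900b60}), so $pf$ is one in $\cat{B}$; then $\bar f$ is a homotopy equivalence in $\cat{E}$ (Lemma~\ref{org30c30d1}), and choosing a homotopy inverse $\bar f'$ of $\bar f$ we get $\langle f \rangle \sim \bar f' \circ \bar f \circ \langle f \rangle = \bar f' \circ f$, a composite of homotopy equivalences; since the class of homotopy equivalences is closed under $\sim$, $\langle f \rangle$ is a homotopy equivalence in $\cat{E}$. It remains to upgrade this to $\cat{E}_{pX}$. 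Using Lemma~\ref{org4b23fdf} I would factor $\langle f \rangle = q \circ j$ inside $\cat{E}_{pX}$ with $j$ an acyclic cofibration (hence a homotopy equivalence) and $q$ a fibration of $\cat{E}_{pX}$; by the comparison $j$ is an acyclic cofibration of $\cat{E}$, so $q$ is a homotopy equivalence in $\cat{E}$ as well (as $q \sim \langle f\rangle \circ s$ for a retraction $s$ of $j$). Now $q$ is a vertical fibration of $\cat{E}$ that is a homotopy equivalence, and one invokes the standard fact that such a "trivial fibration" admits a section $b$ with $qb = 1$ together with a homotopy $bq \sim 1$ over its base; since $q$ is vertical, $b$ is vertical, and taking the relevant path object inside $\cat{E}_{pX}$ via Lemma~\ref{org4b23fdf}, all of this data lies in $\cat{E}_{pX}$, so $q$ is a homotopy equivalence there. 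Hence $\langle f \rangle = q \circ j$ is a homotopy equivalence in $\cat{E}_{pX}$.

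The step I expect to be the main obstacle is this last upgrade: a homotopy in $\cat{E}$ between two vertical maps need not be a homotopy in the fiber, so a vertical homotopy equivalence of $\cat{E}$ is not visibly a homotopy equivalence of $\cat{E}_{pX}$. The factorization trick reduces the issue to a vertical fibration that is a homotopy equivalence, where the homotopies can be chosen \emph{over the base}; the remaining care is precisely in checking --- via the Reedy comparison and Lemma~\ref{org4b23fdf} --- that the section and the deforming homotopy, and the path object over the base used to witness the latter, can all be taken within $\cat{E}_{pX}$.
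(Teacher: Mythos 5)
Your proof is correct in substance, but for the ``only if'' direction it takes a genuinely different route from the paper. The paper argues internally: it takes $f$ to be a \emph{half adjoint} equivalence with data $g,\eta,\varepsilon$ and coherence $f\eta\sim\varepsilon f$, pushes this data down by $p$, and then writes the vertical homotopy inverse explicitly as $\bar g(i;y)\equiv(\sigma_i)_*g(si;y)$ (a transport along $p\eta$), using the coherence to produce the homotopy $\bar f\bar g\sim1$ in the fiber. You instead stay categorical: you show the vertical comparison map $\langle f\rangle$ is a homotopy equivalence in the total category $\cat{E}$ by two-out-of-three against the cartesian lift (Lemma~\ref{org30c30d1} plus Lemma~\ref{orgb900b60} applied to $p$), then upgrade ``vertical homotopy equivalence in $\cat{E}$'' to ``homotopy equivalence in $\cat{E}_{pX}$'' by factoring inside the fiber (Lemma~\ref{org4b23fdf}) and reducing to a vertical acyclic fibration, whose section and deforming homotopy can be chosen \emph{over its codomain} and hence, with the path object chosen in the fiber, vertically. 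This works: you correctly identify the real difficulty (a homotopy in $\cat{E}$ between vertical maps need not be fibered), the verticality bookkeeping goes through (e.g.\ $pq=1$ forces the section and, once the path-object fibration is vertical, the homotopy to be vertical), and your preliminary comparison of fiber and total structures is exactly what Theorem~\ref{org5f7bf15} and the Reedy description provide. The trade-off: the paper's argument is shorter and self-contained given the internal language, while yours avoids the half-adjoint coherence entirely but leans on an ingredient the paper never states --- that in a type-theoretic fibration category a fibration which is a homotopy equivalence admits a section together with a homotopy over its base (true, and available in Shulman's cited work, but it should be cited or proved) --- and the final transfer of the over-the-base homotopy into the fiber's chosen path object deserves to be written out rather than asserted.
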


\begin{proof}
\label{sec-5-2-5}
The ``if'' part is a corollary of Lemma \ref{org30c30d1}.
To show the ``only if'' part,
let \(f : X \to Y\) be a half adjoint equivalence in \(\cat{E}\)
with \(g : Y \to X\), \(\eta : gf \sim 1\) and \(\varepsilon : fg \sim 1\).
Let \(s = pf : I \to J\) and \(t = pg\), \(\sigma = p\eta\) and \(\tau = p\varepsilon\)
which make \(s\) a half adjoint equivalence in \(\cat{B}\).
We construct a homotopy inverse \(\bar{g} : s^{*}Y \to X\)
of the induced morphism \(\bar{f} : X \to s^{*}Y\) in \(\cat{E}_{pX}\).
In the internal language,
\(\bar{f}\) is \(f\) itself \(i : I \mid x : X(i) \vdash f(i; x) : Y(si)\)
and \(g\) is a term of type \(j : J \mid y : Y(j) \vdash g(j; y) : X(tj)\).
Let \(\bar{g}\) be the term
\(i : I \mid y : Y(si) \vdash (\sigma_{i})_{*}g(si; y) : X(i)\).
Then \(i : I \mid x : X(i) \vdash (\bar{g}(si; f(i; x)) : X(i)\)
is homotopic to \(x\) via \(\eta_{x} : g(si; f(i; x)) =_{\sigma_{i}} x\).
To give a homotopy \(\bar{f}\bar{g} \sim 1\),
let \(i : I\) and \(y : Y(si)\).
By definition there is a path \(\bar{\sigma_{i}} : gy =_{\sigma_{i}} \bar{g}y\).
Applying \(f\) we have a path \(f(gy) =_{s\sigma_{i}} \bar{f}(\bar{g}y)\).
Also we have paths \(\varepsilon_{y} : f(gy) =_{\tau_{si}} y\)
and \(s\sigma_{i} = \tau_{si}\) by assumption.
Thus there exists a path \(\bar{f}(\bar{g}y) = y\) in \(Y(i)\).
Hence \(\bar{g}\) is a homotopy inverse of \(\bar{f}\) in \(\cat{E}_{pX}\).
\end{proof}

\begin{lemma}
\label{sec-5-2-6}
\label{orgca61317}
In the following diagram in a fibred type-theoretic fibration category
\(\cat{E} \to \cat{B}\)
\[
\begin{tikzcd}
 [row sep=2ex] A' \arrow[rr] \arrow[dr,"f'"'] &  & A \arrow[dr,"f"] &  \\
  & B' \arrow[rr] &  & B \\
 I' \arrow[rr] \arrow[dr,"s'"'] &  & I \arrow[dr,"s"] &  \\
  & J' \arrow[rr] &  & J, \end{tikzcd}
\]
if the horizontal morphisms are cartesian and
\(s'\) and \(f\) are homotopy equivalences,
then \(f'\) is a homotopy equivalence.
\end{lemma}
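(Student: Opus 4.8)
The plan is to reduce the statement to the fiberwise situation by means of Lemma~\ref{org899ecae} and then transport the resulting fiberwise fact along a reindexing functor. Write $r : I' \to I$ and $q : J' \to J$ for the base morphisms of the cartesian arrows $A' \to A$ and $B' \to B$; commutativity of the outer square forces $qs' = sr$ in $\cat{B}$.

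First I would apply Lemma~\ref{org899ecae} to the homotopy equivalence $f : A \to B$. This yields that $s = pf$ is a homotopy equivalence in $\cat{B}$ and that the induced vertical morphism $\bar f : A \to s^{*}B$ is a homotopy equivalence in $\cat{E}_{I}$. Next I would identify the induced vertical morphism $\bar{f'} : A' \to (s')^{*}B'$ coming from $f'$ with the reindexing $r^{*}\bar f$. Since $A' \to A$ and $B' \to B$ are cartesian we have $A' \cong r^{*}A$ over $I'$ and $B' \cong q^{*}B$ over $J'$, and hence, using $qs' = sr$,
\[ (s')^{*}B' \;\cong\; (s')^{*}q^{*}B \;\cong\; (qs')^{*}B \;\cong\; (sr)^{*}B \;\cong\; r^{*}s^{*}B . \]
Under these isomorphisms $\bar{f'}$ is precisely $r^{*}\bar f$; this is a verification using the universal properties of the pullbacks involved together with the fact that a composite of cartesian morphisms is cartesian.

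It then remains to observe that $r^{*}$ preserves homotopy equivalences. By Theorem~\ref{org5f7bf15} all the hypotheses of Theorem~\ref{org4e7cd71} hold, so in particular the reindexing functor $r^{*} : \cat{E}_{I} \to \cat{E}_{I'}$ is a type-theoretic functor; in particular it preserves fibrations, pullbacks of fibrations and acyclic cofibrations, and hence it preserves homotopy equivalences by Lemma~\ref{orgb900b60}. Therefore $\bar{f'} \cong r^{*}\bar f$ is a homotopy equivalence in $\cat{E}_{I'}$. Since $pf' = s'$ is a homotopy equivalence by hypothesis and $\bar{f'}$ is a homotopy equivalence, Lemma~\ref{org899ecae} applied to $f'$ shows that $f'$ is a homotopy equivalence. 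The one point needing care is the identification $\bar{f'} \cong r^{*}\bar f$ in the second step; everything else is a direct appeal to results already established.
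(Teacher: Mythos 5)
Your proposal is correct and follows essentially the same route as the paper's proof: reduce via Lemma \ref{org899ecae} to showing the vertical comparison map $\bar{f'} : A' \to s'^{*}B'$ is a homotopy equivalence, identify $\bar{f'}$ with the reindexing of $\bar{f} : A \to s^{*}B$ along $I' \to I$, and conclude with Lemma \ref{orgb900b60} since reindexing functors preserve fibrations, pullbacks of fibrations and acyclic cofibrations. The only difference is that you spell out the identification $\bar{f'} \cong r^{*}\bar{f}$ explicitly, which the paper simply asserts.
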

\begin{proof}
\label{sec-5-2-7}
By Lemma \ref{org899ecae}, it is enough to show that
the induced morphism \(\bar{f'} : A' \to s'^{*}B'\)
is a homotopy equivalence in \(\cat{E}_{I'}\).
The induced morphism \(\bar{f} : A \to s^{*}B\)
is a homotopy equivalence by Lemma \ref{org899ecae}.
The morphism \(\bar{f'}\) is the image of \(\bar{f}\)
by the reindexing functor along \(I' \to I\),
and thus \(\bar{f'}\) is a homotopy equivalence by Lemma \ref{orgb900b60}.
\end{proof}
\begin{proposition}
\label{sec-5-2-8}
\label{org8640dba}
Let \(p : \cat{E} \to \cat{B}\)
be a fibred type-theoretic fibration category,
\(u : \tilde{U} \twoheadrightarrow U\) a fibration in \(\cat{B}\)
and \(v : \tilde{V} \twoheadrightarrow V\) a fibration in \(\cat{E}_{1}\).
Suppose \(u\) and \(v\) are univalent
and \(u_{*}\) preserves homotopy equivalences.
Then the fibration \(w\) in Proposition \ref{orgc1f21e5} is univalent.
\end{proposition}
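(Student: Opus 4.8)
The plan is to apply Lemma~\ref{org899ecae} to the canonical morphism $e(w)\colon W\to E(w)$ in $\cat{E}$. Since $p$ is a type-theoretic functor it preserves terminal objects, pullbacks of fibrations, dependent products, and (factorizations, hence) path objects, so it preserves the type constructor $A\simeq B$ and with it the objects $E(-)$ and the canonical maps $e(-)$; as $pw=u$ this yields $p(E(w))\simeq E(u)$ and, under this equivalence, $p(e(w))$ is (homotopic to) $e(u)$. By Lemma~\ref{org899ecae}, $w$ is univalent — i.e. $e(w)$ is a homotopy equivalence — as soon as (i) $e(u)$ is a homotopy equivalence in $\cat{B}$ and (ii) the induced vertical morphism $\theta\colon W\to\bigl(e(u)\bigr)^{*}E(w)$ is a homotopy equivalence in $\cat{E}_{U}$. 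Condition (i) is precisely the univalence of $u$.

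For (ii) I first record that univalence is inherited under reindexing: for the terminal projection $r\colon\tilde{U}\to 1$ of $\cat{B}$, the functor $r^{*}\colon\cat{E}_{1}\to\cat{E}_{\tilde{U}}$ preserves fibrations, pullbacks of fibrations and acyclic cofibrations (as in the proof of Theorem~\ref{org5f7bf15}), hence homotopy equivalences by Lemma~\ref{orgb900b60}, and it carries $v$ to $v_{\tilde{U}}$ and $e(v)$, up to a homotopy equivalence compatible with the canonical maps, to $e(v_{\tilde{U}})$; since $e(v)$ is a homotopy equivalence, so is $e(v_{\tilde{U}})\colon V_{\tilde{U}}\to E(v_{\tilde{U}})$, i.e. $v_{\tilde{U}}$ is univalent in $\cat{E}_{\tilde{U}}$.

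The heart of the argument is then to identify $\theta$. Starting from the descriptions $W=u_{*}V_{\tilde{U}}$ and $\tilde{W}=\varepsilon^{*}\tilde{V}_{\tilde{U}}$ of Proposition~\ref{orgc1f21e5} and the characterization of $w$-small fibrations in Lemma~\ref{orga29b979}, I would unwind the construction of $E(w)$ over $W\times W$. A point of $E(w)$ lying over a point of $e(u)$ amounts to a pair of $v_{\tilde{U}}$-small types over $\tilde{U}$ together with a bi-invertible map between them in $\cat{E}_{\tilde{U}}$; bundling this data along $u$ — i.e. applying $u_{*}$, which as a right adjoint preserves the finite limits used to build $E(-)$ and whose unit and counit $\varepsilon$ supply the requisite Beck--Chevalley isomorphisms, exactly as in the proof of Lemma~\ref{org1e0b166} — shows that $\bigl(e(u)\bigr)^{*}E(w)$ is, up to a homotopy equivalence compatible with the canonical maps, the object $u_{*}E(v_{\tilde{U}})$ of $\cat{E}_{U}$, and that under this identification $\theta$ corresponds to $u_{*}\bigl(e(v_{\tilde{U}})\bigr)$. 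Because $v_{\tilde{U}}$ is univalent, $e(v_{\tilde{U}})$ is a homotopy equivalence in $\cat{E}_{\tilde{U}}$, and since $u_{*}$ preserves homotopy equivalences by hypothesis, $\theta$ is a homotopy equivalence in $\cat{E}_{U}$. This gives (ii) and hence the proposition.

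The main obstacle is precisely this last identification: threading the fibred-universe data $w\colon\varepsilon^{*}\tilde{V}_{\tilde{U}}\twoheadrightarrow u_{*}V_{\tilde{U}}$ through the $\Sigma/\Pi/\mathrm{Id}$-construction of $E(w)$, pulling back along $e(u)$, and matching the result with $u_{*}$ of the corresponding object built inside the fibre $\cat{E}_{\tilde{U}}$, all while keeping careful track of the counit $\varepsilon$ and the Beck--Chevalley isomorphisms. Since path objects are only fixed, not canonical, this matching should be carried out up to homotopy equivalence throughout, which is harmless because univalence depends only on the homotopy-equivalence class of $e(w)$.
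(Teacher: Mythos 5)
Your proposal follows the paper's proof essentially step for step: reduce via Lemma~\ref{org899ecae} to (i) univalence of $u$ and (ii) the vertical comparison map $W \to e(u)^{*}E(w)$ being a homotopy equivalence in $\cat{E}_{U}$, then identify that map with $u_{*}$ of (a reindexing of) $e(v)$ and conclude from univalence of $v$, preservation of homotopy equivalences by reindexing, and the hypothesis on $u_{*}$. The identification you single out as the main obstacle is precisely the step the paper dispatches briefly in the internal language, observing $e(u)^{*}E(w) \cong u_{*}!_{\tilde{U}}^{*}E(v)$ (your $u_{*}E(v_{\tilde{U}})$) and that under this identification the comparison map is $u_{*}!_{\tilde{U}}^{*}e(v)$.
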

\begin{proof}
\label{sec-5-2-9}
We show that the canonical morphism \(e(w) : W \to E(w)\)
is a homotopy equivalence.
By Lemma \ref{org899ecae},
it is enough to show that the canonical morphism \(e(u) : U \to E(u)\)
and the induced morphism \(W \to e(u)^{*}E(w)\)
are homotopy equivalences in \(\cat{B}\) and \(\cat{E}_{U}\) respectively.
The morphism \(e(u)\) is a homotopy equivalence by assumption.
The object \(e(u)^{*}E(w)\) corresponds to the type
\(a : U \mid b : \Pi_{s : \tilde{U}(a)}V, b' : \Pi_{s : \tilde{U}(a)}V
\vdash \Pi_{s : \tilde{U}(a)}\tilde{V}(bs) \simeq \tilde{V}(b's)\),
and this type also corresponds to the object
\(u_{*}!_{\tilde{U}}^{*}E(v)\) where
\(!_{\tilde{U}}\) is the unique arrow \(\tilde{U} \to1\).
Thus the morphism \(W \to e(u)^{*}E(w)\)
is isomorphic to
\(u_{*}!_{\tilde{U}}^{*}e(v) : W \to u_{*}!_{\tilde{U}}^{*}E(v)\)
along \(e(u)^{*}E(w) \cong u_{*}!_{\tilde{U}}^{*}E(v)\).
The latter morphism \(u_{*}!_{U}^{*}e(v)\) is a homotopy equivalence because
\(v\) is univalent,
the reindexing functor preserves homotopy equivalences,
and so does \(u_{*}\) by assumption.
\end{proof}
\begin{lemma}
\label{sec-5-2-10}
\label{org51efab6}
Let
\[
\begin{tikzcd}
 \tilde{U'} \arrow[d,twoheadrightarrow,"u'"'] \arrow[r] & \tilde{U} \arrow[d,twoheadrightarrow,"u"] \\
 U' \arrow[r,"f"'] & U \end{tikzcd}
\]
be a pullback square in a type-theoretic fibration category.
Then
\[
\begin{tikzcd}
 E(u') \arrow[d,twoheadrightarrow] \arrow[r] & E(u) \arrow[d,twoheadrightarrow] \\
 U' \times U' \arrow[r,"f \times f"'] & U \times U \end{tikzcd}
\]
is a pullback square.
\end{lemma}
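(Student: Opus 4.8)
The plan is to identify the pullback $E(u) \times_{U \times U} (U' \times U')$, computed as $(f \times f)^{*}E(u)$ in $\fibcat{\cat{C}/(U' \times U')}$, with $E(u')$ over $U' \times U'$; the induced comparison morphism then witnesses that the displayed square is a pullback. Throughout, $\cat{C}$ denotes the ambient type-theoretic fibration category.

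First I would recall that $E(u) \twoheadrightarrow U \times U$ is obtained from $u$ by a finite sequence of the operations of forming $\Sigma$ (composition of fibrations), $\Pi$ (dependent product of a fibration along a fibration), and path objects, applied to the two fibrations $\pi_{1}^{*}\tilde{U}$ and $\pi_{2}^{*}\tilde{U}$ over $U \times U$. This is simply the unfolding of the defining formula $A \simeq B \equiv \Sigma_{f : A \to B}(\Sigma_{g : B \to A}\Pi_{x : A} g(fx) = x) \times (\Sigma_{h : B \to A}\Pi_{y : B} f(hy) = y)$ read in the internal language of $\fibcat{\cat{C}/(U \times U)}$; the canonical morphisms $f_{u}, g_{u}, \sigma_{u}, h_{u}, \tau_{u}$ record the data exhibiting $E(u)$ as classifying such quintuples, and likewise for $E(u')$.

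Next I would invoke Example \ref{orgaabe3e5}: the pullback functor $(f \times f)^{*} : \fibcat{\cat{C}/(U \times U)} \to \fibcat{\cat{C}/(U' \times U')}$ is a type-theoretic functor, hence preserves terminal objects, fibrations, pullbacks of fibrations, acyclic cofibrations and dependent products, and therefore also $\Sigma$; by Lemma \ref{org67ad797} (a pullback of an acyclic cofibration along a fibration is again an acyclic cofibration) it moreover sends a path object of a fibration to a path object of the reindexed fibration. Since the given square exhibiting $\tilde{U'}$ is a pullback, pasting of pullback squares yields canonical isomorphisms $(f \times f)^{*}\pi_{i}^{*}\tilde{U} \cong \pi_{i}^{*}\tilde{U'}$ over $U' \times U'$ for $i = 1, 2$. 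Applying $(f \times f)^{*}$ to the construction of the previous paragraph therefore transports the recipe for $E(u)$ into the corresponding recipe for $u'$, so that $(f \times f)^{*}E(u) \cong E(u')$ over $U' \times U'$; tracing through the universal properties, the resulting composite $E(u') \to (f \times f)^{*}E(u) \to E(u)$ is the morphism classifying the quintuple $(f_{u}, g_{u}, \sigma_{u}, h_{u}, \tau_{u})$ reindexed along $f \times f$, i.e.\ the canonical comparison morphism, which proves the claim.

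The step needing the most care is the dependence of $E(u)$ and $E(u')$ on the fixed choices of path objects $P_{U}\tilde{U}$ and $P_{U'}\tilde{U'}$: the functor $(f \times f)^{*}$ produces a perfectly good path object for $\tilde{U'}$, but this need not coincide with the chosen $P_{U'}\tilde{U'}$ on the nose. I expect to handle this through the universal property of $E(-)$ recalled above, which pins down $E(u')$ up to a canonical isomorphism over $U' \times U'$ independently of which path object is used to phrase the quintuples --- any two path objects of the same fibration being linked by a fiberwise homotopy equivalence inducing such an isomorphism. The remaining work, namely verifying naturality of the comparison morphism, is then a routine diagram chase with these universal properties.
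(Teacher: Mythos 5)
Your main line of argument --- that reindexing along $f \times f$ commutes with the $\Sigma$, $\Pi$ and path-object constructions out of which $E(u)$ is built, because the pullback functor of Example \ref{orgaabe3e5} is type-theoretic --- is in substance the same as the paper's proof, which simply runs it in the internal language: $E(u')$ corresponds to the type $x : U', y : U' \vdash \tilde{U'}(x) \simeq \tilde{U'}(y)$, and $\tilde{U'}(x) \simeq \tilde{U'}(y) \equiv \tilde{U}(fx) \simeq \tilde{U}(fy)$, which corresponds to the pullback of $E(u)$ along $f \times f$. So far, so good; your observation that $f^{*}P_{U}\tilde{U}$ is again a path object of $\tilde{U'}$ (via Lemma \ref{org67ad797}) is also correct.

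The gap is in your final paragraph, which is exactly the delicate point you yourself single out. The universal property of $E(u')$ is stated relative to the \emph{fixed} path object $P_{U'}\tilde{U'}$ (the quintuples contain $\sigma, \tau$ mapping into it), so it does not determine $E(u')$ up to isomorphism ``independently of which path object is used.'' Two path objects of $\tilde{U'}$ over $U'$ are related only by maps over $\tilde{U'} \times_{U'} \tilde{U'}$ that are mutually inverse up to homotopy, and postcomposing $\sigma, \tau$ with such maps induces a fiberwise homotopy equivalence between the two versions of $E(u')$, not an isomorphism over $U' \times U'$. Hence, if the chosen $P_{U'}\tilde{U'}$ differs from $f^{*}P_{U}\tilde{U}$, your argument only shows that $E(u')$ and $(f \times f)^{*}E(u)$ are homotopy equivalent over $U' \times U'$, which is strictly weaker than the asserted pullback square. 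To close the gap you need path objects chosen stably under reindexing (take $f^{*}P_{U}\tilde{U}$ as the path object of $\tilde{U'}$), which is what the paper does implicitly by arguing in the internal language, where substitution commutes with identity-type formation on the nose, and does explicitly in the proof of Proposition \ref{orgc52c97e}, where $F(P_{V}\tilde{V})$ is chosen as the path object of $Fv$. With that convention your proof goes through; without it, the concluding step fails.
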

\begin{proof}
\label{sec-5-2-11}
The fibration \(E(u') \twoheadrightarrow U' \times U'\)
corresponds to the type \(x : U', y : U' \vdash \tilde{U'}(x) \simeq \tilde{U'}(y)\).
But \(\tilde{U'}(x) \simeq \tilde{U'}(y) \equiv \tilde{U}(fx) \simeq \tilde{U}(fy)\),
which corresponds to a pullback of \(E(u)\) along \(f \times f\).
\end{proof}
\begin{proposition}
\label{sec-5-2-12}
\label{orgc52c97e}
Let \(p : \cat{E} \to \cat{B}\)
be a fibred type-theoretic fibration category
with a univalent fibration \(w : \tilde{W} \twoheadrightarrow W\)
above \(u : \tilde{U} \twoheadrightarrow U\),
\(\cat{A}\) a type-theoretic fibration category
with a univalent fibration \(v : \tilde{V} \twoheadrightarrow V\),
and \(F : \cat{A} \to \cat{B}\)
a functor preserving fibrations, pullbacks of fibrations
and acyclic cofibrations.
Suppose \(Fv\) is a pullback of \(u\) along a morphism
\(h : FV \to U\) with upper morphism \(k : FV \to \tilde{U}\).
Then \((v, h^{*}w) : (\tilde{V}, k^{*}W) \twoheadrightarrow (V, h^{*}W)\)
is a univalent fibration in \(F^{*}\cat{E}\).
\end{proposition}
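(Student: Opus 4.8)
The plan is to show that the canonical morphism $e((v,h^{*}w))\colon (V,h^{*}W)\to E((v,h^{*}w))$ is a homotopy equivalence in the total category $F^{*}\cat{E}$ (recall that $(v,h^{*}w)$ is a fibred universe in $F^{*}\cat{E}$ by Proposition~\ref{org4a92ba1}), and to do this by peeling off the base and the fibre using Lemma~\ref{org899ecae} applied to the fibred type-theoretic fibration category $F^{*}\cat{E}\to\cat{A}$ of Proposition~\ref{orgc8a114a}. The projection $F^{*}\cat{E}\to\cat{A}$ is a type-theoretic functor; it carries $e((v,h^{*}w))$ to $e(v)\colon V\to E(v)$, which is a homotopy equivalence because $v$ is univalent by hypothesis. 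By Lemma~\ref{org899ecae} it then remains to prove that the induced morphism $(V,h^{*}W)\to e(v)^{*}E((v,h^{*}w))$ is a homotopy equivalence in the fibre $\cat{E}_{FV}$.

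The core of the argument is to identify this induced morphism. I would work in the internal language for $F^{*}\cat{E}\to\cat{A}$ and use that the fibre part of the universe $(v,h^{*}w)$, viewed as a fibration in $F^{*}\cat{E}$ above $v$, is exactly the reindexing along $k$ of the fibre part of $w$; substituting the trivial self-equivalence of the base along $e(v)$ should then exhibit $e(v)^{*}E((v,h^{*}w))$ as the reindexing along $h\colon FV\to U$ of $e(u)^{*}E(w)$, and the induced morphism $(V,h^{*}W)\to e(v)^{*}E((v,h^{*}w))$ as the reindexing along $h$ of the induced morphism $W\to e(u)^{*}E(w)$. In carrying this out one must know that forming $E$ commutes with change of base (Lemma~\ref{org51efab6}) and with the functor $F$; the latter is where care is needed, since $F$ is not assumed to preserve dependent products and $E(-)$ is built using function types. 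The point that makes it go through is that $u$, $v$ and $w$ are universes and $Fv$ is a pullback of $u$, so that $E(v)$, and indeed every fibration entering the construction of $E((v,h^{*}w))$, is small, i.e.\ a pullback of $u$ or of $w$; on such fibrations $F$ does preserve the relevant dependent products up to canonical isomorphism, and that suffices.

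Granting the identification, the conclusion is immediate. Since $w$ is univalent, $e(w)\colon W\to E(w)$ is a homotopy equivalence in $\cat{E}$, so by Lemma~\ref{org899ecae} applied to $\cat{E}\to\cat{B}$ the induced morphism $W\to e(u)^{*}E(w)$ is a homotopy equivalence in $\cat{E}_{U}$. Reindexing preserves homotopy equivalences (Lemma~\ref{orgb900b60}), hence its reindexing along $h$ is a homotopy equivalence in $\cat{E}_{FV}$, which is precisely the morphism we needed. I expect the only real obstacle to be the bookkeeping in the second paragraph --- making the identification of $e(v)^{*}E((v,h^{*}w))$ precise and handling the coherences around $F$ not preserving dependent products --- everything else being a direct application of the lemmas already established, in close analogy with the proof of Proposition~\ref{org8640dba}.
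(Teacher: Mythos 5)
Your overall skeleton---reduce via Lemma \ref{org899ecae} (applied to $F^{*}\cat{E}\to\cat{A}$) to the base morphism $e(v)$ plus a fibrewise morphism, identify that fibrewise morphism with a reindexing of the fibrewise part of $e(w)$, and finish with Lemma \ref{orgb900b60}---is essentially the paper's; the paper packages your two uses of Lemma \ref{org899ecae} and the reindexing step into a single application of Lemma \ref{orgca61317}, to the square whose horizontal morphisms are the cartesian morphisms $h^{*}W\to W$ and $c^{*}E(h^{*}w)\to E(w)$ and whose known vertical equivalences are $F(e(v))$ and $e(w)$. The gap is in how you justify the crucial identification. You propose to deal with $E(-)$ being built from dependent products by asserting that, because everything in sight is small, $F$ preserves the relevant dependent products up to canonical isomorphism. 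That is not among the hypotheses and does not follow from them: $F$ is only assumed to preserve fibrations, pullbacks of fibrations and acyclic cofibrations, together with $Fv$ being a pullback of $u$, and such a functor need not carry $\Pi_{g}f$ of small fibrations to $\Pi_{Fg}Ff$ even up to isomorphism (for instance a countable-power functor $X\mapsto X^{\mathbb{N}}$ on sets preserves pullbacks and smallness of fibres, yet sections of $A^{\mathbb{N}}\to B^{\mathbb{N}}$ are not the $\mathbb{N}$-indexed power of sections of $A\to B$). So the step on which you lean would fail in general.

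The missing idea---and the paper's actual device---is that no preservation property of $F$ beyond the stated ones is needed, because the mismatch is built into the object $E((v,h^{*}w))$ itself: dependent products in $F^{*}\cat{E}$ are constructed (proof of Proposition \ref{orgc8a114a}) by reindexing along the canonical comparison $\varepsilon'\colon F(\Pi_{t}I)\to\Pi_{Ft}FI$, and consequently, checking universal properties, $E((v,h^{*}w))$ is the pair $(E(v),\,c^{*}E(h^{*}w))$ for the canonical comparison morphism $c\colon F(E(v))\to E(Fv)$ (defined using $F(P_{V}\tilde{V})$ as the path object of $Fv$), with $c\circ F(e(v))=e(Fv)$, and $e((v,h^{*}w))$ is the induced morphism $h^{*}W\to c^{*}E(h^{*}w)$. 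With this description in hand, Lemma \ref{org51efab6} exhibits $E(h^{*}w)$ as a pullback of $E(w)$, the horizontal morphisms of the relevant square are cartesian, and the conclusion follows from Lemma \ref{orgca61317} since $F(e(v))$ is a homotopy equivalence (univalence of $v$ together with Lemma \ref{orgb900b60}) and $e(w)$ is one by univalence of $w$. If you replace the appeal to ``$F$ preserves small dependent products'' by this comparison-morphism bookkeeping, your argument becomes the paper's proof; the rest of your proposal is sound.
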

\begin{proof}
\label{sec-5-2-13}
We first describe the canonical morphism
\(e(v, h^{*}w) : (V, h^{*}W) \to E(v, h^{*}w)\) in \(F^{*}\cat{E}\).
There is a canonical morphism \(c : F(E(v)) \to E(Fv)\)
corresponding to \((F(f_{v}), F(g_{v}), F(\sigma_{v}), F(h_{v}), F(\tau_{v}))\),
where we choose \(F(P_{V}\tilde{V})\) as a path object of \(F(v)\).
It is easy to show that \(E(v, h^{*}w)\) is a reindexing of \(E(h^{*}w)\) along \(c\)
and \(e(v, h^{*}w)\) is the induced morphism \(f : h^{*}W \to c^{*}E(h^{*}w)\)
\[
\begin{tikzcd}
 [row sep=2ex] h^{*}W \arrow[drr,"e(h^{*}w)"] \arrow[dr,dashrightarrow,"f"'] &  &  \\
  & c^{*}E(h^{*}w) \arrow[r] & E(h^{*}w) \\
 FV \arrow[drr,"e(Fv)"] \arrow[dr,"F(e(v))"'] &  &  \\
  & F(E(v)) \arrow[r,"c"'] & E(Fv) \end{tikzcd}
\]
checking the universal property.

We have to show that \(f : h^{*}W \to c^{*}E(h^{*}w)\) is a homotopy equivalence.
By Lemma \ref{org51efab6},
\(E(h^{*}w)\) is a pullback of \(E(w)\) along the morphism
\(\bar{h} \times \bar{h} : h^{*}W \times h^{*}W \to W \times W\).
The morphism \(\bar{h} \times \bar{h}\) is a cartesian morphism
and so is the upper morphism \(E(h^{*}w) \to E(w)\).
Hence in the following diagram in \(\cat{E} \to \cat{B}\)
\[
\begin{tikzcd}
 [row sep=2ex] h^{*}W \arrow[rr] \arrow[dr,"f"'] &  & W \arrow[dr,"e(w)"] &  \\
  & c^{*}E(h^{*}w) \arrow[rr] &  & E(w) \\
 FV \arrow[rr] \arrow[dr,"F(e(v))"'] &  & U \arrow[dr,"e(u)"] &  \\
  & F(E(v)) \arrow[rr] &  & E(u), \end{tikzcd}
\]
the horizontal morphisms are cartesian,
and \(F(e(v))\) and \(e(w)\) are homotopy equivalences.
Thus \(f\) is a homotopy equivalence by Lemma \ref{orgca61317}.
\end{proof}

\begin{example}
\label{sec-5-2-14}
\label{org0da0785}
Let \(\cat{C}\) be a type-theoretic fibration category
with a univalent universe \(u : \tilde{U} \to U\).
Consider the codomain functor \(\cod : \fibcat{\arcat{\cat{C}}} \to \cat{C}\).
Its fiber at \(1\) is \(\cat{C}\) which has a univalent universe \(u\).
The fiber at an object \(A\) is \(\fibcat{\cat{C}/A}\)
whose type-theoretic structure is inherited from \(\cat{C}\).
Thus each \(A \times u\) is a universe in \(\fibcat{\cat{C}/A}\).
Since \(u_{*} : \fibcat{\cat{C}/\tilde{U}} \to \fibcat{\cat{C}/U}\)
is given by dependent products,
it preserves small fibrations.
Hence the codomain functor has a fibred universe above \(u\)
by Proposition \ref{orgc1f21e5}.
Since the univalence axiom implies the function extensionality,
\(u_{*}\) preserves homotopy equivalences.
Thus this fibred universe is univalent by Proposition \ref{org8640dba}.
\end{example}
\begin{example}
\label{sec-5-2-15}
\label{orge84dd8e}
Let \(p : \cat{E} \to \cat{B}\) be a fibred type-theoretic fibration category
with a fibred universe \(w : \tilde{W} \to W\) above \(u : \tilde{U} \to U\).
Suppose \(w\) is univalent (and so is \(u\)).
Then there is a pullback in \(\cat{B}\)
\[
\begin{tikzcd}
 \tilde{U} \times \tilde{U} \arrow[d,twoheadrightarrow,"u \times u"'] \arrow[r] & \tilde{U} \arrow[d,twoheadrightarrow] \\
 U \times U \arrow[r] & U \end{tikzcd}
\]
because \(u \times u\) is the composition of \(u \times 1\) and \(1 \times u\)
which are \(u\)-small fibrations.
By Proposition \ref{org4a92ba1} and \ref{orgc52c97e},
the relational model \(\Rel(p)\) has a univalent universe.
\end{example}

\label{sec-5-2-16}
Corollary \ref{orgb94bb63} also holds for
a Martin-L\"{o}f type theory with a univalent universe
and a fibred type-theoretic fibration category
with a fibred univalent universe.
\begin{corollary}
\label{sec-5-2-16-1}
\label{org24b33f0}
Let \(p : \cat{E} \to \cat{B}\) be a fibred type-theoretic fibration category
with a fibred univalent universe,
and \(R : \cat{T}(M) \to \cat{E}\) a logical predicate
on an interpretation \(F : \cat{T}(M) \to \cat{B}\) in \(\cat{B}\)
of a Martin-L\"{o}f type theory \(M\) with a univalent universe.
Then for any term \(a : A \vdash t(a) : B(a)\),
there exists a term \(a : FA \mid r : RA(a) \vdash \hat{t}(a; r) : RB(a, Ft(a); r)\)
in the internal language for \(p\),
where \(\hat{t}\) is the induced morphism
\(RA \to (Ft)^{*}RB\) from \(Rt : RA \to RB\).
\end{corollary}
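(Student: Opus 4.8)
The plan is to reduce the statement to the Basic Lemma (Corollary \ref{orgb94bb63}) by upgrading the ambient model. First I would observe that if $w : \tilde{W} \twoheadrightarrow W$ is a fibred univalent universe in $p$, then by condition 1 of Definition \ref{org2fc2ff6} the fibration $w$ is in particular a universe in $\cat{E}$, and by hypothesis it is univalent. Hence the total category $\cat{E}$ is a type-theoretic fibration category equipped with a univalent universe, which is exactly what is needed for it to be a model of the Martin-L\"{o}f type theory $M$: the univalence axiom holds in this model precisely because the canonical morphism $e(w)$ is a homotopy equivalence. Consequently the logical predicate $R : \cat{T}(M) \to \cat{E}$, being a type-theoretic functor that sends the univalent universe of $M$ to $w$, is a sound interpretation of $M$ in $\cat{E}$.

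Next, given a term $a : A \vdash t(a) : B(a)$ of $M$, soundness yields a morphism $Rt : RA \to RB$ in $\cat{E}$ which is a section of the fibration $RB \twoheadrightarrow RA$. Since $p \circ R = F$, applying $p$ to this data returns exactly $Ft : FA \to FB$ over $\cat{B}$. Reading $\cat{E}$ through the internal language $\lang{p}$ of the fibred type-theoretic fibration category — so that $RA$ is a type $a : FA \mid {} \vdash RA(a) \ \type$ over the kind $FA$, and $RB$ is a type $a : FA,\, b : FB(a) \mid r : RA(a) \vdash RB(a,b;r) \ \type$ — the morphism $Rt$ decomposes into its base component $Ft$ and a fibrewise component, namely the induced vertical morphism $RA \to (Ft)^{*}RB$ in $\cat{E}_{FA}$. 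This vertical morphism is precisely the term $a : FA \mid r : RA(a) \vdash \hat{t}(a; r) : RB(a, Ft(a); r)$ asserted in the statement, which completes the argument.

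The only point requiring real care is the first step: one must make sure that adjoining the univalence axiom to $M$ does not disturb soundness, i.e.\ that a type-theoretic fibration category with a univalent universe genuinely validates the univalence axiom in the syntactic sense, and that, by definition, an interpretation of $M$ with a univalent universe is required to send the universe of $M$ to the given (univalent) universe of the target — so that both $F$ and $R$ preserve it. Everything after that — the existence of $Rt$ and its decomposition along $p$ — is identical to the proof of Corollary \ref{orgb94bb63} and uses no homotopy-theoretic input beyond what is already packaged into the internal language of $p$; in particular the fibred structure of $p$ (Definition \ref{org204acfc}) is what makes the decomposition of $Rt$ into $Ft$ and $\hat{t}$ legitimate.
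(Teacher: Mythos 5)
Your proposal is correct and follows essentially the same route as the paper, which treats this corollary exactly as it treats the Basic Lemma (Corollary \ref{orgb94bb63}): the fibred univalent universe makes the total category $\cat{E}$ a model of $M$ with its univalent universe, soundness of the interpretation $R$ yields $Rt$, and since $p \circ R = F$ the induced vertical morphism $RA \to (Ft)^{*}RB$ is the term $\hat{t}$ read off in the internal language $\lang{p}$. The point you flag for care --- that a univalent universe in the semantic sense validates the syntactic univalence axiom and that interpretations are required to preserve the universe --- is precisely the (implicit) content the paper adds over Corollary \ref{orgb94bb63}, so nothing is missing.
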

\section{Relational Parametricity for Homotopy Type Theory}
\label{sec:orgf77f7c5}
\label{orge32f29e}
In this last section
we show a relational parametricity result for homotopy type theory.
As a corollary we show
that every closed term of type of polymorphic endofunctions on a loop space
is homotopic to some iterated concatenation of a loop.

\begin{theorem}[{Abstraction Theorem}]
\label{sec-6-1}
\label{org5aa2f07}
In the Martin-L\"{o}f type theory with univalent universe \(\U\),
empty type \(\Zero : \U\),
unit type \(\One : \U\),
two point type \(\Two : \U\),
type of natural numbers \(\N : \U\)
and unit circle \(\Sph{1} : \U\),
define a context \(\gamma : \Gamma, \gamma' : \Gamma, \rho : R_{\Gamma}(\gamma, \gamma')\)
for each context \(\Gamma\)
and a type \(\gamma : \Gamma, \gamma' : \Gamma, \rho : R_{\Gamma}(\gamma, \gamma'),
a : A(\gamma), a' : A(\gamma') \vdash R_{A}(\rho, a, a')\)
for each type \(\Gamma \vdash A\) such that:
\begin{itemize}
\item \(R_{()} \equiv ()\) for the empty context \(()\);
\item \(R_{\Gamma, A}((\gamma, a), (\gamma', a')) \equiv \rho : R_{\Gamma}(\gamma, \gamma'), r : R_{A}(\rho, a, a')\);
\item \(c : \Sigma_{A(\gamma)}B(\gamma), c' : \Sigma_{A(\gamma')}B(\gamma')
  \vdash R_{\Sigma_{A}B}(\rho, c, c') \equiv \Sigma_{r : R_{A}(\rho, \pi_{1}(c), \pi_{1}(c'))}R_{B}(r, \pi_{2}(c), \pi_{2}(c'))\);
\item \(f : \Pi_{A(\gamma)}B(\gamma), f' : \Pi_{A(\gamma')}B(\gamma')
  \vdash R_{\Pi_{A}B}(\rho, f, f') \equiv \Pi_{a : A(\gamma), a' : A(\gamma'), r : R_{A}(\rho, a, a')}R_{B}(r, fa, f'a')\);
\item \(r : R_{A}(\rho, a, a'), s : R_{A}(\rho, b, b'), p : a = b, p' : a' = b'
  \vdash R_{=_{A}}(r, s, p, p') \equiv r =_{\langle p, p' \rangle} s\);
\item \(c : C, c' : C \vdash R_{C}(c, c') \equiv c = c'\) for \(C \equiv \Zero, \One, \Two, \N, \Sph{1}\);
\item \(R_{\U}(X, X') \equiv X \to X' \to \U\).
\end{itemize}
Then for every term \(\Gamma \vdash t : A\),
there exists an associated term
\(\gamma : \Gamma, \gamma' : \Gamma, \rho : R_{\Gamma}(\gamma, \gamma')
\vdash \hat{t} : R_{A}(\rho, t(\gamma), t(\gamma'))\).
\end{theorem}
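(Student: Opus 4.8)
The plan is to obtain the relational interpretation from the Basic Lemma applied to the relational model over the syntactic category of $M$, treating the base types by hand. Write $\cat{T}$ for the syntactic category $\cat{T}(M)$; since $M$ has a univalent universe $\U$ and the base types $\Zero$, $\One$, $\Two$, $\N$, $\Sph{1}$, the category $\cat{T}$ is a type-theoretic fibration category carrying a univalent universe together with these (higher) inductive types. By Example \ref{sec-4-2-3} the codomain functor $\cod : \fibcat{\arcat{\cat{T}}} \to \cat{T}$ is a fibred type-theoretic fibration category, by Example \ref{org0da0785} it has a fibred univalent universe above $\U$, and hence by Example \ref{orge84dd8e} the relational model $p : \Rel(\cod) \to \cat{T}$ of Example \ref{org38edd10} is a fibred type-theoretic fibration category with a univalent universe. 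In the internal language of $\Rel(\cod)$ the kinds are the types of $M$ and a type over a kind $\Gamma$ is a binary relation on $\Gamma$, that is, a fibration over $\Gamma \times \Gamma$. Unwinding the constructions of dependent sum, dependent product, path object and universe in $\Rel(\cod)$ --- in particular the description of the identity type as $r =_{\pair{p}{p'}} s$ coming from the factorization in the proof of Lemma \ref{org1b5b6ac}, the description of the dependent product from the proof of Lemma \ref{org1e0b166}, and the fact that the universe of $\Rel(\cod)$ classifies kinds small in $\U$ equipped with a small relation --- reproduces exactly the clauses for $R_{\Sigma_{A}B}$, $R_{\Pi_{A}B}$, $R_{=_{A}}$, and $R_{\U}(X,X') \equiv X \to X' \to \U$ listed in the statement.

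First I would assemble these interpretations into a logical predicate $R : \cat{T} \to \Rel(\cod)$ over the identity functor $\mathrm{id}_{\cat{T}}$. By initiality of the syntactic category it suffices to equip $\Rel(\cod)$ with an interpretation of every type former and term former of $M$; that this interpretation restricts along the projection $p$ to the standard one, so that $p \circ R = \mathrm{id}_{\cat{T}}$, then follows from a second appeal to initiality, since $p$ is a type-theoretic functor. For $\Pi$, $\Sigma$, $=$ and for $\U$ with univalence the interpretation is the structure of $\Rel(\cod)$ recalled above, and it produces the corresponding clauses. It remains to interpret the base types $\Zero$, $\One$, $\Two$, $\N$, $\Sph{1}$ in $\Rel(\cod)$, and here the statement prescribes that each $C$ be sent to the object $(C, c =_{C} c')$, with the point constructors related to themselves by $\refl$, and with $\Sloop$ related to itself --- via the clause for $R_{=_{\Sph{1}}}$ --- by the canonical element of $\refl_{\base} =_{\pair{\Sloop}{\Sloop}} \refl_{\base}$ obtained from the inverse law $\Sloop^{-1} \cdot \Sloop = \refl_{\base}$.

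Then I would verify that each of these objects satisfies the appropriate (higher) induction rule in $\Rel(\cod)$ by reducing it to the induction rule in $\cat{T}$. For $\Zero$ this is vacuous; for $\One$ one uses that $\One$ is contractible, so that $(\One, c = c')$ is contractible in $\Rel(\cod)$ and its induction rule is trivial; for $\Two$ and $\N$ one uses that these are sets with decidable equality, so that the total space $\Sigma_{c,c':C}(c = c')$ is equivalent to the diagonal and induction follows from induction in $\cat{T}$. The circle is the delicate case: the dependent induction rule of $(\Sph{1}, c = c')$ in $\Rel(\cod)$ must be derived from circle induction in $\cat{T}$ together with the already-established interpretation of identity types, and one must check that the relational loop datum above is respected coherently. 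I expect this coherence check for $\Sph{1}$ to be the main obstacle; the cases of $\Pi$, $\Sigma$ and the universe are, by contrast, forced by the constructions of the preceding sections.

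Finally, with the logical predicate $R$ in hand, I would apply the Basic Lemma for fibred type-theoretic fibration categories with a fibred univalent universe, Corollary \ref{org24b33f0}, to $p$, the interpretation $F = \mathrm{id}_{\cat{T}}$, and $R$. Unfolding its conclusion in the syntax of $M$ --- where, since a type over a kind of $\lang{\Rel(\cod)}$ is a binary relation, the kind part of a judgment is duplicated, so that $R\Gamma$ reads as $R_{\Gamma}(\gamma,\gamma')$, $RA$ as $R_{A}$, and $Ft$ as $t$ --- yields, for every term $\Gamma \vdash t : A$ of $M$, a term $\gamma : \Gamma, \gamma' : \Gamma, \rho : R_{\Gamma}(\gamma,\gamma') \vdash \hat{t} : R_{A}(\rho, t(\gamma), t(\gamma'))$, which is the assertion of the theorem.
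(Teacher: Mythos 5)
Your proposal follows essentially the same route as the paper: form the relational model $\Rel(\cat{T}) \to \cat{T}$ as the change of base of the codomain fibration, obtain its univalent universe via Examples \ref{org0da0785} and \ref{orge84dd8e}, verify that the relational interpretations of $\Zero$, $\One$, $\Two$, $\N$, $\Sph{1}$ satisfy the corresponding induction principles so that $R$ is a logical predicate over the identity interpretation, and conclude by Corollary \ref{org24b33f0}. Your extra detail on the circle case and on $p \circ R = \mathrm{id}_{\cat{T}}$ merely fleshes out what the paper leaves as "checking the induction principles," so the argument is correct and matches the paper's proof.
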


\begin{proof}
\label{sec-6-2}
Let \(\cat{T}\) be the syntactic category of the type theory.
Consider the relational model \(p : \Rel(\cat{T}) \to \cat{T}\)
for the codomain functor \(\fibcat{\arcat{\cat{T}}} \to \cat{T}\).
Note that it is also the gluing construction for
the functor \(\cat{T} \ni A \mapsto A \times A \in \cat{T}\).
By Example \ref{org0da0785} and \ref{orge84dd8e},
the total category \(\Rel(\cat{T})\)
has a univalent universe.
Syntactically, the universe in the relational model is the type family
\(A : \U, B : \U \vdash A \to B \to \U \ \type\).
It is easy to show that
\(R_{\Zero}\), \(R_{\One}\), \(R_{\Two}\), \(R_{\N}\) and \(R_{\Sph{1}}\)
are empty type, unit type, two point type, type of natural numbers and unit circle, respectively,
in the model \(\Rel(\cat{T})\)
by checking the induction principles for these types.
Hence \(R\) defines a logical predicate \(R : \cat{T} \to \Rel(\cat{T})\)
on the trivial interpretation \(\mathbf{id} : \cat{T} \to \cat{T}\).
The conclusion follows from Corollary \ref{org24b33f0}.
\end{proof}

\label{sec-6-3}
As a corollary of Theorem \ref{org5aa2f07}
we have the homotopy unicity property on
functions parametrized over the small types.

\begin{example}
\label{sec-6-3-1}
\label{org2ffa018}
We show that any closed term \(t : \Pi_{X : \U}X \to X\)
must be homotopic to the identity function,
that is, the type \(\Pi_{X : \U}\Pi_{x : X}tx = x\) is inhabited.

First we show the \emph{naturality} of \(t\),
that is, the type \(\Pi_{X : \U, Y : \U}\Pi_{f : X \to Y}\Pi_{x : X}f(tx) = t(fx)\)
is inhabited.
By Theorem \ref{org5aa2f07} we have a term
\begin{equation*}
\hat{t} : \Pi_{X : \U, Y : \U, P : X \to Y \to \U}
\Pi_{x : X, y : Y, p : P(x, y)}P(tx, ty).
\end{equation*}
For \(X : \U\), \(Y : \U\) and \(f : X \to Y\),
letting \(P(x, y) \equiv fx = y\),
we have \(\Pi_{x : X, y : Y, p : fx = y}f(tx) = ty\).
Taking \(y \equiv fx\) and \(p \equiv \refl\), we have \(\Pi_{x : X}f(tx) = t(fx)\).

Now let \(X \equiv \One\).
Then a function \(f : \One \to Y\) corresponds to
an element \(y : Y\), and thus
\(ty = t(f*) = f(t*) = f* = y\)
where \(* : \One\) is the constructor of the type \(\One\).
This argument except the existence of \(\hat{t}\)
can be done inside the type theory.
Therefore the type \(\Pi_{X : \U}\Pi_{x : X}tx = x\) is inhabited.
\end{example}
\begin{remark}
\label{sec-6-3-2}
As in \cite[Exercise 6.9]{hottbook},
the law of excluded middle violates the homotopy unicity property of polymorphic identity:
assuming the law of excluded middle for mere propositions in \(\U\),
we can construct a closed term \(t : \Pi_{X : \U}X \to X\)
such that \(t_{\Two}0 \equiv 1\) and \(t_{\Two}1 \equiv 0\)
where \(0 : \Two\) and \(1 : \Two\) are the constructors of the type \(\Two\).
Conversely the existence of such a polymorphic endofunction
implies the law of excluded middle \cite{booij2017parametricity}.
Example \ref{org2ffa018} says that the univalence axiom does not violate the homotopy unicity property on the contrast.

Since the law of excluded middle for mere propositions in \(\U\)
can be written as a closed type,
it can be assumed in a context.
Thus the homotopy unicity property of an \emph{open} term does not hold in general,
while in Atkey et al's reflexive graph model of the dependent type theory
with a universe, dependent product types and a type of natural numbers,
any term of type \(\Pi_{X : \U}X \to X\) is natural
as a consequence of the \emph{identity extension property} \cite[Theorem 2]{atkey2014relationally}.
\end{remark}
\begin{example}
\label{sec-6-3-3}
Let \(t : \Pi_{X : \U}\Pi_{x : X}x = x \to x = x\) be a closed term.
We show that \(t\) is homotopic to some iterated concatenation of a loop,
that is, the type \(\Sigma_{n : \Z}\Pi_{X : \U}\Pi_{x : X}\Pi_{p : x = x}tp = p^{n}\)
is inhabited.
Note that in a type theory with a two point type \(\Two : \U\),
a coproduct of two small types \(A : \U\) and \(B : \U\)
is defined as \(A + B \equiv \Sigma_{x : \Two}[A, B](x)\)
where \([A, B] : \Two \to \U\) is defined by recursion as
\([A, B](0) \equiv A\) and \([A, B](1) \equiv B\).
In particular, the type \(\Z : \U\) of integers is defined as
\(\Z \equiv \N + \One + \N\).

First we show the naturality of \(t\),
that is, the type
\(\Pi_{X : \U, Y : \U}\Pi_{f : X \to Y}\Pi_{x : X}\Pi_{p : x = x}f(tp) =t(fp)\)
is inhabited.
By Theorem \ref{org5aa2f07} we have a term
\begin{align*}
\hat{t} : {} & \Pi _{X : \U, Y : \U, W : X \to Y \to \U}\Pi_{x : X, y : Y, w : W(x, y)} \\
& \Pi_{p : x = x, q : y = y, \beta : w =_{\langle p, q \rangle} w}w =_{\langle tp, tq \rangle} w.
\end{align*}
For \(X : \U\), \(Y : \U\) and \(f : X \to Y\),
let \(W(x, y) \equiv f x = y\).
Then, for \(w : f x = y\), \(p : x = x\) and \(q : y = y\),
\(w =_{\langle p, q \rangle} w\)
is equivalent to \(fp \cdot w = w \cdot q\).
For \(x : X\) and \(p : x = x\),
taking \(y \equiv f x\), \(w \equiv \refl\), \(q \equiv fp\) and \(\beta \equiv \refl\),
we have \(f(tp) =t(fp)\).

Let \(Y : \U\), \(y : Y\) and \(q : y = y\)
which correspond to a function \(f : \Sph{1} \to Y\)
as \(f(\base) \equiv y\) and \(f(\Sloop) = q\),
where \(\base : \Sph{1}\) is the point constructor of \(\Sph{1}\)
and \(\Sloop : \base = \base\) is the path constructor.
Now \(t(\Sloop)\) is a loop in \(\Sph{1}\) at \(\base\).
Since \(\pi_{1}(\Sph{1}) \simeq \Z\) is provable in homotopy type theory \cite[Section 8.1]{hottbook},
\(t(\Sloop) = \Sloop^{n}\) for some integer \(n\).
Hence \(tq = t(f(\Sloop)) = f(t(\Sloop))
= f(\Sloop^{n}) = f(\Sloop)^{n} = q^{n}\).
This argument can be internalized except the existence of \(\hat{t}\).
\end{example}
\section{Conclusion and Future Work}
\label{sec:org5c1ec4f}
We conclude that
fibred type-theoretic fibration categories are useful
in the study of homotopy type theory, as seen in \Sect \ref{orge32f29e}
where we show the abstraction theorem and
the homotopy unicity property on polymorphic functions in homotopy type theory.
Fibred type-theoretic fibration categories
give a fibred categorical description for Shulman's gluing construction.
Although the relational model used in this work
can be obtained by gluing construction,
we expect that there are fibred type-theoretic fibration categories
that are not gluing constructions for any functor.

There also is a theoretical interest related to higher category theory.
Kapulkin constructed a locally cartesian closed quasi-category
from a categorical model of dependent type theory \cite{kapulkin2015locally}.
We conjecture that fibred type-theoretic fibration categories
are carried to cartesian fibrations \cite[Definition 2.4.2.1]{lurie2009higher}
by his construction.
This conjecture suggests that there is an \((\infty, 1)\)-categorical description
of logical predicates in terms of cartesian fibrations.

\section*{Acknowledgments}
\label{sec:org9aa513e}
I would like to thank my supervisor Masahito Hasegawa,
Shin-ya Katsumata and Naohiko Hoshino for discussions and
helpful feedback and corrections to drafts of this paper.
The reviewers of FSCD 2016, FoSSaCS 2017 and LICS 2017
also made a lot of helpful corrections and suggestions.
I would also like to thank
Kazuyuki Asada for helpful comments on relational parametricity,
and Peter LeFanu Lumsdaine
for telling me a related concept of fibrations of fibration categories.

This work was supported by JST ERATO Grant Number JPMJER1603, Japan.

\label{sec-8}
\bibliography{my-references}
\end{document}